\DeclareSymbolFont{cyrletters}{OT2}{wncyr}{m}{n}
\DeclareMathSymbol{\Sha}{\mathalpha}{cyrletters}{"58}
\theoremstyle{plain}
\newtheorem{thm}{Theorem}[section]
\newtheorem{lem}[thm]{Lemma}
\newtheorem{prop}[thm]{Proposition}
\newtheorem{cor}[thm]{Corollary}
\theoremstyle{definition}
\theoremstyle{remark}
\newtheorem{rem}[thm]{Remark}
\renewcommand{\AA}{\mathbb{A}}
\newcommand{\FF}{{\mathbb F}}
\newcommand{\CC}{{\mathbb C}}
\newcommand{\dfk}{\mathfrak{d}}
\newcommand{\Pcal}{\mathcal{P}}
\newcommand{\Tcal}{\mathcal{T}}
\newcommand{\Dcal}{\mathcal{D}}
\newcommand{\Kcal}{\mathcal{K}}
\newcommand{\Wcal}{\mathcal{W}}
\newcommand{\Zcal}{\mathcal{Z}}
\newcommand{\Pscr}{\mathscr{P}}
\newcommand{\tr}{\operatorname{Tr}}
\newcommand{\Nr}{\operatorname{Nr}}
\newcommand{\N}{\operatorname{N}}
\newcommand{\Mat}{\operatorname{Mat}}
\newcommand{\ord}{\operatorname{ord}}
\newcommand{\GL}{\operatorname{GL}}
\newcommand{\B}{\operatorname{B}}
\newcommand{\Z}{\operatorname{Z}}
\newcommand{\T}{\operatorname{T}}
\renewcommand{\U}{\operatorname{U}}
\newcommand{\SL}{\operatorname{SL}}
\newcommand{\Oo}{\operatorname{O}}
\newcommand{\GO}{\operatorname{GO}}
\newcommand{\SO}{\operatorname{SO}}
\newcommand{\PGL}{\operatorname{PGL}}
\newcommand{\Ad}{\operatorname{Ad}}
\newcommand{\Hom}{\operatorname{Hom}}
\newcommand{\Div}{\operatorname{Div}}
\newcommand{\re}{\operatorname{Re}}
\numberwithin{equation}{section}
\begin{document}

\title{Waldspurger formula over function fields}
\author[Chih-Yun Chuang]{Chih-Yun Chuang}
\address{Department of Mathematics, National Taiwan University, Taiwan}
\email{cychuang@ntu.edu.tw}

\author[Fu-Tsun Wei]{Fu-Tsun Wei}
\address{Department of Mathematics, National Central University, Taiwan}
\email{ftwei@math.ncu.edu.tw}

\subjclass[2010]{11F41, 11F67, 11R58}
\keywords{Function field, Automorphic form on $\GL_2$, Rankin-Selberg $L$-function} 
\thanks{This work was supported by grants from Ministry of Science and Technology, Taiwan.}

\maketitle


\begin{abstract}
In this paper, we derive a function field version of the Waldspurger formula for the central critical values of the Rankin-Selberg $L$-functions. This formula states that the central critical $L$-values in question can be expressed as the \lq\lq ratio\rq\rq\ of the global toric period integral to the product of the local toric period integrals. Consequently, this result provides a necessary and sufficient criterion for the non-vanishing of these central critical $L$-values, and supports the Gross-Prasad conjecture for $\SO(3)$ over function fields.
\end{abstract}


\section*{Introduction}

In 1985, Waldspurger \cite{Wa3} established a fundamental formula for the central critical value of the Rankin-Selberg $L$-function associated to an automorphic cuspidal representation of $\GL_2$ over a given number field $F$ convolved with a Hecke character on the idele class group of a quadratic field extension over $F$. 
This formula asserts that \lq\lq global toric period integrals\rq\rq\ can be written as the central critical $L$-value in question multiplying the product of \lq\lq local toric period integrals.\rq\rq\ 
From this result, 
these critical $L$-values now have been studied extensively over number fields and lead to plenty of arithmetic consequences (cf.\ \cite{CH1}, \cite{CH2}, and \cite{YZZ}). 
The main purpose of this paper is to derive a function field analogue of Walspurger's formula. \\

Let $k$ be a global function field with \textbf{odd} characteristic, and denote the adele ring of $k$ by $k_\AA$. Let $\Dcal$ be a quaternion algebra over $k$, and $K$ be a separable quadratic algebra over $k$ with an embedding $\iota: K \hookrightarrow \Dcal$. We put $\Dcal_\AA$ and $K_\AA$ to be the adelization of $\Dcal$ and $K$, respectively. Let  $\Pi^\Dcal$ be an infinite dimensional automorphic representation of $\Dcal_\AA^\times$ (cuspidal if $\Dcal$ is the matrix algebra) with a unitary central character $\eta$. Given a unitary Hecke character $\chi : K^\times \backslash K_\AA^\times \rightarrow \CC^\times$, suppose $\eta \cdot \chi\big|_{k_\AA^\times} =1$. Let $P_\chi^\Dcal \in \Hom_{K_\AA^\times}(\Pi^\Dcal, \chi^{-1})$ be the global toric period integral:
$$P_\chi^\Dcal(f) := \int_{K^{\!^\times} k_\AA^\times \backslash K_\AA^\times} f\big(\iota(a)\big)\chi(a) d^\times a, \quad \forall f \in \Pi^\Dcal.$$
The measure $d^\times a$ chosen here is the Tamagawa measure 
(cf.\ Section~\ref{sec 1.2}).
This then gives us a linear functional $\Pcal_\chi^\Dcal : \Pi^\Dcal \otimes \widetilde{\Pi}^\Dcal \rightarrow \CC$ 
(where $\widetilde{\Pi}^\Dcal$ is the contragredient representation of $\Pi^\Dcal$) defined by:
$$\Pcal_\chi^\Dcal(f\otimes \tilde{f}) := P_\chi^\Dcal(f)\cdot P_{\chi^{-1}}^\Dcal(\tilde{f}), \quad \forall f \otimes \tilde{f} \in \Pi^\Dcal \otimes \widetilde{\Pi}^\Dcal.$$

On the other hand, write $\Pi^\Dcal = \otimes_v \Pi^\Dcal_v$, and $\widetilde{\Pi}^\Dcal = \otimes_v \widetilde{\Pi}^\Dcal_v$.
We may assume that the identification between $\Pi^\Dcal$ (resp.\ $\widetilde{\Pi}^\Dcal$) and $\otimes_v \Pi^\Dcal_v$ (resp.\ $\otimes \widetilde{\Pi}^\Dcal_v$) satisfies the following equality:
$$\langle \cdot, \cdot \rangle_{\text{Pet}}^\Dcal = \frac{2 L(1,\Pi,\text{Ad})}{\zeta_k(2)} \cdot \prod_v \langle \cdot, \cdot \rangle_v^\Dcal: \Pi^\Dcal \times \widetilde{\Pi}^\Dcal \rightarrow \CC,$$
where:
\begin{itemize}
\item the pairing $\langle \cdot,\cdot\rangle_{\text{Pet}}^\Dcal$ is induced from the Petersson inner product (with respect to the Tamagawa measure, i.e.\ the total volume of $\Dcal^\times k_\AA^\times \backslash \Dcal_\AA^\times$ is $2$, cf.\ Section~\ref{sec 1.2}).
\item for each place $v$ of $k$, $\langle \cdot, \cdot \rangle_v^\Dcal$ is the natural duality pairing between $\Pi_v^\Dcal$ and $\widetilde{\Pi}_v^\Dcal$.
\item $\Pi$ is the automorphic cuspidal representation of $\GL_2(k_\AA)$ correspoding to $\Pi^\Dcal$ via the Jacquet-Langlands correspondence.
\item $L(s,\Pi,\text{Ad})$ is the adjoint $L$-function of $\Pi$.
\item $\zeta_k(s)$ is the Dedekind-Weil zeta function of $k$.
\end{itemize} 
Write $\chi = \otimes_v \chi_v$.
Then for each $v$,
the local toric period integral $\Pscr^\Dcal_{\chi,v} : \Pi^\Dcal_v \otimes \widetilde{\Pi}^\Dcal_v \rightarrow \CC$ is given by:
$$\Pscr^\Dcal_{\chi,v}(f_v \otimes \tilde{f}_v) := 
\ast \cdot \int_{K_v^\times/k_v^\times} \langle \Pi_v^\Dcal\big(\iota(a_v)\big)f_v, \tilde{f}_v\rangle_v \chi_v(a_v) d^\times a_v.$$
Here $d^\times a_v$ is the Tamagawa measure on $K_v^\times / k_v^\times$ 
(chosen in Section~\ref{sec 1.2}), and $\ast$ is a product of \lq\lq local $L$-factors\rq\rq\ so that $\Pscr^\Dcal_{\chi,v}(f_v \otimes \tilde{f}_v) = 1$ when $v$ is \lq\lq good\rq\rq\ (cf.\ Lemma~\ref{lem 5.1}). 
These local toric period integrals induce another linear functional $\Pscr^\Dcal_\chi := \otimes \Pscr^\Dcal_{\chi,v} : \Pi^\Dcal \otimes \widetilde{\Pi}^\Dcal \rightarrow \CC$.
We now state the main theorem of this paper as follows (cf.\ Theorem~\ref{thm 5.1}):

\begin{thm}\label{thm 0.1}
Under the above assumptions, we have
$$\Pcal^\Dcal_\chi = L(\frac{1}{2},\Pi \times \chi) \cdot \Pscr^\Dcal_\chi,$$
where $L(s,\Pi\times \chi)$ is the Rankin-Selberg $L$-function associated to $\Pi$ and $\chi$.
\end{thm}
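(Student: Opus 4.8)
The plan is to follow Waldspurger's original strategy, adapted to the function-field setting, via an explicit computation of the Rankin–Selberg integral against a theta kernel, together with a local–global seesaw argument. First I would reduce to the ``split'' case $\Dcal = \Mat_2$ by means of the Jacquet–Langlands correspondence: since both sides of the asserted identity depend only on the representation $\Pi$ (the left side through the global period $P_\chi^\Dcal$, the right side through the product of normalized local periods $\Pscr^\Dcal_{\chi,v}$), and since the local factors $\Pscr^\Dcal_{\chi,v}$ depend only on the local representation $\Pi^\Dcal_v$ up to the $L$-factor normalization in $\ast$, it suffices to establish the formula for one choice of $\Dcal$ in each Jacquet–Langlands packet and then transport it. In fact, the cleanest route is to prove the identity for \emph{all} admissible $\Dcal$ simultaneously, because the interesting content — the non-vanishing criterion — lives precisely in the interplay between the different $\Dcal$'s through the local Tunnell–Saito dichotomy.

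The technical heart is a theta-correspondence identity. I would realize $\Pi$ (or $\Pi^\Dcal$) as a theta lift from a similitude group attached to the quadratic algebra $K$ inside the quaternion algebra, i.e.\ work with the dual pair $(\GL_2, \GO(\Dcal))$ or, after restricting similitude characters appropriately, with $(\SL_2, \Oo(K))$ and its variants. One then writes the global toric period $P_\chi^\Dcal(f)$ as an integral of $f$ against an Eisenstein-type or theta-type kernel and unfolds. Concretely: choose a Schwartz–Bruhat function $\varphi = \otimes_v \varphi_v$ on the relevant space; the mixed period $\int \int \theta(\varphi)(g,h)\, f(g)\, \overline{\text{(theta on }K\text{)}}\, dg\, dh$ can be computed two ways. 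Unfolding on the $\GL_2$-side and using the local Shimura-type functional produces the Rankin–Selberg integral representing $L(\tfrac12, \Pi\times\chi)$ times a product of local zeta integrals; unfolding on the orthogonal side produces $P_\chi^\Dcal(f)\cdot P_{\chi^{-1}}^\Dcal(\tilde f) = \Pcal^\Dcal_\chi(f\otimes\tilde f)$ times another product of local factors. Equating the two and factoring as an Euler product yields the theorem, provided the local factors match the normalizations $\ast$ and the Petersson normalization $2L(1,\Pi,\mathrm{Ad})/\zeta_k(2)$.

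Thus the main obstacle is the \textbf{local harmonic analysis}: for every place $v$ one must prove an identity of the shape
$$
\int_{K_v^\times/k_v^\times} \langle \Pi_v^\Dcal(\iota(a_v)) f_v, \tilde f_v\rangle_v \, \chi_v(a_v)\, d^\times a_v \;=\; (\text{local }L\text{-ratio}) \cdot \langle f_v, \tilde f_v\rangle_v
$$
for the unramified data (giving $\Pscr^\Dcal_{\chi,v}\equiv 1$ at good places, which is Lemma~\ref{lem 5.1}), and at the ramified places one must control the relevant local integrals well enough to see that their product over all $v$ reassembles into the single Euler product $L(\tfrac12,\Pi\times\chi)$ without extraneous factors. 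This requires: (i) careful choice of additive and multiplicative Tamagawa measures so that the product formula eliminates all normalizing constants (here the \emph{odd} characteristic hypothesis and the fact that $k$ has no archimedean places simplify life considerably — there are no delicate archimedean integrals, and every local integral is over a totally disconnected group); (ii) the local Shimura/Rankin–Selberg zeta integral computation; and (iii) matching the adjoint $L$-value that appears when one passes from the abstract tensor-product model to the Petersson inner product — this is the Rallis inner product formula on the $\GL_2$ side. Once these local computations are in hand, the global statement follows by assembling the Euler product and invoking the convergence and functional equation of $L(s,\Pi\times\chi)$ (a polynomial in $q^{-s}$ in the function-field case, so convergence issues are trivial). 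I expect step (iii), the precise bookkeeping of $L$-factors in the inner-product normalization, to be the most delicate point, since it is where all the constants must conspire to cancel.
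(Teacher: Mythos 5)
Your overall strategy---computing a mixed cusp-form/theta period in two ways via a seesaw, unfolding on the $\GL_2$ side to obtain $L(\tfrac12,\Pi\times\chi)$ times local zeta integrals, and matching the Petersson normalization through an inner-product formula---is essentially the route the paper takes. But your opening reduction is wrong: $P_\chi^\Dcal$ and the local functionals $\Pscr_{\chi,v}^\Dcal$ live on $\Pi^\Dcal\otimes\widetilde{\Pi}^\Dcal$, not on $\Pi\otimes\widetilde{\Pi}$, and they depend essentially on $\Dcal$. By the Tunnell--Waldspurger dichotomy, $\Pscr_{\chi,v}^\Dcal$ vanishes identically unless the $\epsilon$-condition $(\star)$ holds at $v$, so for all but (at most) one quaternion algebra both sides of the identity vanish identically; nothing can be ``transported'' from $\Mat_2$ to another $\Dcal$ via Jacquet--Langlands, and the premise that ``both sides depend only on $\Pi$'' should be deleted. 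The identity must be proved directly on each $\Dcal$, which is exactly what the Shimizu theta lift (realizing $\Pi^\Dcal\otimes\widetilde{\Pi}^\Dcal$ as theta lifts of $\Pi$ through the quadratic space $(\Dcal,\Nr_{\Dcal/k})$) accomplishes; you gesture at this, but it is the mechanism, not an optional variant.

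The substantive gap is that your two-way unfolding does not by itself reach the central $L$-value: the Rankin--Selberg unfolding requires a Siegel Eisenstein series $E(g,s;\varphi_2)$ with a complex parameter $s$, whereas the seesaw identity requires a genuine theta kernel on $[\GO(V_{(1)})\times\GO(V_{(-\gamma)})]$. The bridge between the two computations is the Siegel--Weil formula at $s=\tfrac12$ (Theorem~\ref{thm 3.1}), which identifies $L(1,\varsigma_K)\,E(g,\tfrac12;\varphi_2)$ with the theta integral attached to $(K,-\gamma\N_{K/k})$; without it the two unfoldings compute different integrals and cannot be equated. Likewise your step (iii) is not mere bookkeeping: factoring the global Shimizu correspondence into local ones with the normalization $2L(1,\Pi,\mathrm{Ad})/\zeta_k(2)$ (Theorem~\ref{thm A.2}) is what defines the local matrix coefficients $\theta_v^{\Dcal,o}$ entering $\Pscr_{\chi,v}^\Dcal$, and when $\Dcal$ is division this rests on a Siegel--Weil formula for the metaplectic dual pair $(\widetilde{\SL}_2,\Oo(\Dcal^o))$ on the pure quaternions (Appendix~\ref{sec B}), a nontrivial input you do not account for. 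Finally, the case $K=k\times k$ lies outside the quadratic-field seesaw altogether and needs the separate (elementary) argument with Hecke--Jacquet--Langlands zeta integrals (Appendix~\ref{sec C}).
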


We remark that $L(s,\Pi\times \chi)$ can be identified with $L(s,\Pi_K \otimes \chi)$, the $L$-function of $\Pi_K$ twisted by $\chi$, where $\Pi_K$ is Jacquet's lifting of $\Pi$ to $\GL_2(K_\AA)$ (cf.\ \cite[Theorem 20.6]{Jac}). \\

Let $\varsigma_K$ be the quadratic Hecke character of $K/k$ and put $\varsigma_{K,v}:= \varsigma_K \big|_{k_v^\times}$. From the work of Tunnell \cite{Tun} and Waldspurger \cite[Lemme 10]{Wa3}, the local toric period integral $\Pscr^\Dcal_{\chi,v}$ is not trivial if and only if
$$\quad \quad \quad \quad \quad \quad \epsilon_v(\Pi \times \chi) = \eta_v(-1) \varsigma_{K,v}(-1) \epsilon_v(\Dcal). \quad \quad \quad \quad \quad \quad (\star) $$
Here $\epsilon_v(\Pi\times \chi)$ is the local root number of $L(s,\Pi\times \chi)$ at $v$ and $\epsilon_v(\Dcal)$ is the Hasse invariant of $\Dcal$ at $v$.
This leads us to the following consequence.

\begin{cor}\label{cor 0.2}
Suppose $\prod_v \epsilon_v(\Pi\times \chi) = 1$. 
Let $\Dcal$ be the unique (up to isomorphism) quaternion algebra over $k$ so that the equality~$(\star)$ holds for every place $v$ of $k$. 
Then the non-vanishing of $L(1/2,\Pi\times \chi)$ is equivalent to the existance of an automorphic form $f \in \Pi^\Dcal$ so that
$$P_\chi^\Dcal(f) = \int_{K^{\!^\times} k_\AA^\times \backslash K_\AA^\times} f\big(\iota(a)\big) \chi(a) d^\times a \neq 0.$$
\end{cor}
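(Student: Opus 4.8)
The plan is to deduce Corollary~\ref{cor 0.2} from Theorem~\ref{thm 0.1} together with the local criterion~$(\star)$. Recall the setup: we are given that the global root number $\prod_v \epsilon_v(\Pi\times\chi)=1$ (the sign condition forcing $L(s,\Pi\times\chi)$ to have a functional equation with sign $+1$, so that central vanishing is not forced), and we pick the quaternion algebra $\Dcal/k$ characterized by the Hasse invariants $\epsilon_v(\Dcal) = \epsilon_v(\Pi\times\chi)\cdot\eta_v(-1)\varsigma_{K,v}(-1)$ at each place. I would first check this $\Dcal$ is well-defined: the prescribed local invariants lie in $\{\pm1\}$, are equal to $+1$ for almost all $v$ (since $\Pi$, $\chi$, $\eta$, $\varsigma_K$ are all unramified outside a finite set and $\epsilon_v=1$ at good places), and their product over all $v$ equals $\big(\prod_v\epsilon_v(\Pi\times\chi)\big)\cdot\big(\prod_v\eta_v(-1)\big)\cdot\big(\prod_v\varsigma_{K,v}(-1)\big) = 1\cdot 1\cdot 1 = 1$ by the product formulas for $\eta$ (which is trivial on $k^\times$) and for the quadratic character $\varsigma_K$; hence by the classification of quaternion algebras over a global field such a $\Dcal$ exists and is unique. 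Moreover, since $\Dcal$ is ramified at an even, finite set of places, the Jacquet--Langlands transfer $\Pi^\Dcal$ of $\Pi$ exists and is an infinite-dimensional automorphic representation of $\Dcal_\AA^\times$ (cuspidal when $\Dcal$ is split), so the hypotheses of Theorem~\ref{thm 0.1} are met, and we also need to fix an embedding $\iota:K\hookrightarrow\Dcal$, which exists because at every place $v$ the quadratic algebra $K_v$ embeds in $\Dcal_v$ precisely when $\Dcal_v$ is split or $K_v$ is a field — and this matches the way $\epsilon_v(\Dcal)$ was chosen, so the local embedding conditions are satisfied everywhere and a global embedding follows.

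Next, with $\Dcal$ and $\iota$ fixed, I invoke Theorem~\ref{thm 0.1}: $\Pcal^\Dcal_\chi = L(\tfrac12,\Pi\times\chi)\cdot\Pscr^\Dcal_\chi$ as linear functionals on $\Pi^\Dcal\otimes\widetilde\Pi^\Dcal$. The key point is that by construction of $\Dcal$, the equality~$(\star)$ holds at \emph{every} place $v$, so by the Tunnell--Waldspurger local result quoted in the excerpt, each local functional $\Pscr^\Dcal_{\chi,v}$ is nonzero; since $\Pscr^\Dcal_{\chi,v}$ is (up to the explicit $\ast$-factor) an element of the at-most-one-dimensional space $\Hom_{K_v^\times}(\Pi^\Dcal_v\otimes\widetilde\Pi^\Dcal_v,\CC)$, nonvanishing means it is a nonzero vector in that line, and the restricted tensor product $\Pscr^\Dcal_\chi=\otimes_v\Pscr^\Dcal_{\chi,v}$ is therefore a nonzero functional on $\Pi^\Dcal\otimes\widetilde\Pi^\Dcal$. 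Consequently $\Pcal^\Dcal_\chi\neq 0$ if and only if $L(\tfrac12,\Pi\times\chi)\neq0$.

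It then remains to translate ``$\Pcal^\Dcal_\chi\neq0$'' into ``there exists $f\in\Pi^\Dcal$ with $P^\Dcal_\chi(f)\neq0$.'' By definition $\Pcal^\Dcal_\chi(f\otimes\tilde f)=P^\Dcal_\chi(f)\cdot P^\Dcal_{\chi^{-1}}(\tilde f)$, so if $\Pcal^\Dcal_\chi$ is not identically zero there are $f,\tilde f$ with both $P^\Dcal_\chi(f)\neq0$ and $P^\Dcal_{\chi^{-1}}(\tilde f)\neq0$; in particular $P^\Dcal_\chi(f)\neq0$. Conversely, if some $f$ has $P^\Dcal_\chi(f)\neq0$, then applying the same reasoning to $\chi^{-1}$ (which satisfies the symmetric hypotheses, since $\overline{\chi}=\chi^{-1}$ is again unitary with $\eta\cdot\chi^{-1}\big|_{k_\AA^\times}=1$, and the contragredient $\widetilde\Pi^\Dcal$ plays the role of $\Pi^\Dcal$) one gets $\tilde f$ with $P^\Dcal_{\chi^{-1}}(\tilde f)\neq0$ — alternatively, one can simply take $\tilde f=\bar f$ and use that $P^\Dcal_{\chi^{-1}}(\bar f)=\overline{P^\Dcal_\chi(f)}\neq0$ under a suitable choice of the contragredient model — and then $\Pcal^\Dcal_\chi(f\otimes\tilde f)\neq0$. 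Combining, $L(\tfrac12,\Pi\times\chi)\neq0 \iff \Pcal^\Dcal_\chi\not\equiv0 \iff \exists f\in\Pi^\Dcal,\ P^\Dcal_\chi(f)\neq0$, which is the assertion of the corollary.

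The main obstacle I anticipate is not the $L$-value comparison — that is immediate from Theorem~\ref{thm 0.1} — but the bookkeeping in constructing $\Dcal$ and $\iota$: one must verify carefully that the prescribed Hasse invariants have product $1$ (using the global reciprocity/product formulas for $\eta$ and $\varsigma_K$ and the already-assumed $\prod_v\epsilon_v(\Pi\times\chi)=1$), that the finite set of ramified places has even cardinality, and that the local conditions ``$K_v\hookrightarrow\Dcal_v$'' are consistent place-by-place with this choice so that a global embedding exists; additionally one should confirm $\Pi^\Dcal$ is genuinely infinite-dimensional (equivalently, $\Pi$ is not one-dimensional, which holds as $\Pi$ is cuspidal) so Theorem~\ref{thm 0.1} applies. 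Once those compatibility checks are in place, the rest is a formal consequence of the main theorem and the Tunnell--Waldspurger local nonvanishing criterion.
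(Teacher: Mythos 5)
Your proof is correct and follows essentially the same route as the paper: by the choice of $\Dcal$ the Tunnell--Waldspurger criterion makes every local functional $\Pscr^\Dcal_{\chi,v}$ nonzero (the paper's Lemmas~\ref{lem 5.2} and \ref{lem 5.3}), so $\Pscr^\Dcal_\chi \neq 0$ and Theorem~\ref{thm 0.1} gives $\Pcal^\Dcal_\chi \neq 0 \iff L(\tfrac12,\Pi\times\chi)\neq 0$, with your conjugation identity $P^\Dcal_{\chi^{-1}}(\bar f)=\overline{P^\Dcal_\chi(f)}$ correctly supplying the passage from ``some $P^\Dcal_\chi(f)\neq 0$'' back to $\Pcal^\Dcal_\chi\neq 0$ (a step the paper leaves implicit). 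One justification is off, though it does not affect the conclusion: the existence of the Jacquet--Langlands transfer $\Pi^\Dcal$ is not a consequence of the ramification set of $\Dcal$ being finite and even, but of the fact that $(\star)$ with $\epsilon_v(\Dcal)=-1$ forces $\Pi_v$ to be square-integrable at every ramified place, which is what guarantees the local (hence global) transfer exists.
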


In particular, via the isomorphism $\PGL_2 \cong \SO(3)$, Corollary~\ref{cor 0.2} supports the Gross-Prasad conjecture for the $\SO(3)$ case over function fields (cf.\ \cite{I-I}).\\

The proof of Theorem~\ref{thm 0.1} basically follows Waldspurger's approach in \cite{Wa3} for the number field case.
Suppose first that $K$ is a quadratic field over $k$. Let $(V_\Dcal, Q_{V_\Dcal})$ be the quadratic space $(\Dcal, \Nr_{\Dcal/k})$, where $\Nr_{\Dcal/k}$ is the reduced norm from $\Dcal$ to $k$. Given $\phi \in \Pi$ and a Schwartz function $\varphi \in S(V_\Dcal(k_\AA))$, suppose $\phi$ and $\varphi$ are both pure tensors. From the Rankin-Selberg method, we have (cf.\ Corollary~\ref{cor 3.3} (2))
\begin{eqnarray}\label{eqn 0.1}
L(2s,\varsigma_K) \cdot \Zcal(s;\phi,\varphi) &=& L(s,\Pi\times \chi) \cdot \prod_v \Zcal_v^o(s;\phi_v,\varphi_v),
\end{eqnarray}
where the zeta integral $\Zcal(s;\phi,\varphi)$ (resp.\ $\Zcal_v^o(s;\phi_v,\varphi_v)$) is defined in the beginning of Section~\ref{sec 3.2} (resp.\ Corollary~\ref{cor 3.3} (2)).
Applying the Siegel-Weil formula in Theorem~\ref{thm 3.1} and the seesaw identity (cf.\ the diagram~(\ref{seesaw})), we may connect $L(1,\varsigma_K) \cdot \Zcal(1/2;\phi,\varphi)$ with a global toric period integral $\Tcal(\phi,\varphi)$ (cf.\ the equation~(\ref{eqn 4.2}) and Proposition~\ref{prop 4.3}). On the other hand, the local zeta integral $\Zcal_v^o(1/2;\phi_v,\varphi_v)$ can be rewritten as a local toric period integral $\Tcal_v(\phi_v,\varphi_v)$ (cf.\ Proposition~\ref{prop 4.1}). 
The global (resp.\ local) Shimizu correspondence in Theorem~\ref{thm 2.2} (resp.\ Section~\ref{sec 2.3.1}) then enables us to connect $\Tcal$ (resp.\ $\Tcal_v$) with $\Pcal_\chi^\Dcal$ (resp.\ $\Pscr_{\chi,v}^\Dcal$), which completes the proof. 
Note that in our approach, we always take the original Schwartz functions (i.e.\ functions in $S(V(k_\AA))$, cf.\ Section~\ref{sec 2}), instead of using the \lq\lq extended ones\rq\rq\ (i.e.\ functions in $S(V(k_\AA) \times k_\AA^\times)$ as in \cite[Section 3]{Wa3}. This simplifies the arguments.

One ingredient of the above proof is to decompose the global Shimizu correspondence as the tensor product of local ones (cf.\ Section~\ref{sec 2.3.1} and Appendix~\ref{sec A}). To achieve this, we need to verify the Siegel-Weil formula for the dual pair $(\widetilde{\SL}_2, O(\Dcal^o))$, where $\widetilde{\SL}_2$ is the metaplectic cover of $\SL_2$, and $\Dcal^o$ consists of all the pure quaternions in $\Dcal$ (cf.\ Appendix~\ref{sec B}).

When $K = k\times k$, the existance of the embedding $\iota: K \hookrightarrow \Dcal$ forces that $\Dcal = \Mat_2$. We may write $\chi = \chi_1 \times \chi_2$ where $\chi_i$ are unitary Hecke characters on $k^\times \backslash k_\AA^\times$. In this case we have $$L(s,\Pi \times \chi) = L(s,\Pi \otimes \chi_1) \cdot L(s,\Pi \otimes \chi_2).$$ Note that the assumption $\eta \cdot \chi\big|_{k_\AA^\times} = 1$ says that $\Pi \otimes \chi_2 = \widetilde{\Pi} \otimes \chi_1^{-1}$. The global (resp.\ local) toric period integrals can then be easily identified with the product of the special values of the global (resp.\ local) zeta integrals of forms in $\Pi \otimes \chi_1$ and $\widetilde{\Pi}\otimes \chi_1^{-1}$ at $s=1/2$. Therefore Theorem~\ref{thm 0.1} follows immediately (cf.\ Appendix~\ref{sec C}).\\

Identifying $\widetilde{\Pi}^\Dcal$ with the space $\{\bar{f}: f\in \Pi^\Dcal\}$ via the Petersson inner product on $\Pi^\Dcal$,
we put $\Vert f \Vert_{\text{Pet}}^\Dcal:= \langle f, \bar{f}\rangle_{\text{Pet}}^\Dcal$ (resp.\ $\Vert f_v \Vert_{v}^\Dcal:=  \langle f_v, \bar{f}_v\rangle_v^\Dcal$).  
For non-zero pure tensors $\phi = \otimes_v \phi_v \in \Pi$ and $f = \otimes_v f_v \in \Pi^\Dcal$, 
from Theorem~\ref{thm 0.1} we obtain that
\begin{eqnarray}\label{eqn 0.2}
\frac{|P_\chi^\Dcal(f)|^2}{\Vert f\Vert_{\text{Pet}}^\Dcal}
&=& \frac{L(1/2,\Pi \times \chi)}{\Vert \phi \Vert_{\text{Pet}}^{\Mat_2}} \cdot \prod_v \alpha_v(\phi_v,f_v),
\end{eqnarray}
where
\begin{eqnarray}
&& \alpha_v(\phi_v,f_v) \nonumber  \\
&:=& \left(\frac{L_v(1,\Pi,\text{Ad})}{\zeta_v(2)} \Vert \phi_v \Vert_v^{\Mat_2}\right) \cdot
\left(\frac{L_v(1,\varsigma_K)}{L_v(1/2,\Pi\times \chi)} \int_{K_v/k_v^\times} \frac{\langle \Pi_v^\Dcal\big(\iota(a_v)\big) f_v, \bar{f}_v\rangle_v}{\Vert f_v \Vert_v^\Dcal} \chi_v(a_v) d^\times a_v\right). \nonumber
\end{eqnarray}
Taking suitable $\phi$ and $f$, it is possible to calculate the local quantities $\alpha_v(\phi_v,f_v)$ in concrete terms. Therefore the equality~(\ref{eqn 0.2}) leads us to an explicit formula of $L(1/2,\Pi\times \chi)$. This will be studied in a subsequent paper.\\

The content of this paper is given as follows. In Section~\ref{sec 1}, we first set up basic notations used throughout this paper, and fix all the Haar measures in the paper to be the Tamagawa measures. In Section~\ref{sec 2}, we recall needed properties of theta series associated to quadratic fields and quaternion algebras, and state the Shimizu correspondence in the version used here. In Section~\ref{sec 3}, we apply the Rankin-Selberg method to show the equation~(\ref{eqn 0.1}). In Section~\ref{sec 4}, we first rewrite $\Zcal_v^o(1/2;\phi_v,\varphi_v)$ in terms of the local toric period integral $\Tcal_v(\phi_v,\varphi_v)$ associated to $\phi_v$ and $\varphi_v$ in Section~\ref{sec 4.1}. Applying the seesaw identity, the special value $L(1,\varsigma_K) \cdot \Zcal(1/2;\phi,\varphi)$ equals to the global toric period integral $\Tcal(\phi,\varphi)$ associated to $\phi$ and $\varphi$ in Section~\ref{sec 4.2}. We thereby arrive at the main theorem in Section~\ref{sec 5} by applying the global and local Shimizu correspondence. In Appendix~\ref{sec A}, we recall the decomposition of the global Shimizu correspondence into the tensor product of local ones. In Appendix~\ref{sec B}, we verify the Siegel-Weil formula for the dual pair $(\widetilde{\SL}_2, O(\Dcal^o))$, where $\widetilde{\SL}_2$ is the metaplectic cover of $\SL_2$, and $\Dcal^o$ consists of all the pure quaternions in a division quaternion algebra $\Dcal$. The case when $K = k\times k$ for Theorem~\ref{thm 0.1} is proven in Appendix~\ref{sec C}.


\section{Prelimilaries}\label{sec 1}

\subsection{Basic settings}\label{sec 1.1}

Give a ring $R$, the multiplicative group of $R$ is denoted by $R^\times$. 
By $\#(S)$ for each set $S$, 
we mean the cardinality of $S$. 
\\

Let $k$ be a global function field with finite constant field $\FF_q$. 
Throughout this paper, we always assume $q$ to be {\bf odd}. 
For each place $v$ of $k$, let $k_v$ be the completion of $k$ at $v$, 
and $O_v$ be the valuation ring in $k_v$. 
Choose a uniformizer $\varpi_v$ once and for all.
Set $\FF_v:= O_v/\varpi_v O_v$, the residue field at $v$, 
and put $q_v:= \#(\FF_v)$. 
The valuation on $k_v$ is denoted by $\ord_v$, 
and we normalize the absolute value $|\cdot|_v$ on $k_v$ by $|a_v|_v := q_v^{-\ord_v(a_v)}$ for every $a_v \in k_v$.\\

Let $k_\AA$ be the ring of adeles of $k$, 
i.e.\ $k_\AA = \prod_v' k_v$, the restricted direct product of $k_v$ with respect to $O_v$.
The maximal compact subring of $k_\AA$ is denoted by $O_{\AA}$.
The group of ideles of $k$ is $k_\AA^\times$, with the maximal compact subgroup $O_{\AA}^\times$.
For $a = (a_v)_v \in k_\AA^\times$, we put $|a|_\AA := \prod_v |a_v|_v$. \\

Finally,
fix a non-trivial additive character $\psi: k_\AA \rightarrow \CC^\times$ which is trivial on $k$.
For each place $v$ of $k$, put $\psi_v := \psi\big|_{k_v}$.
Let $\delta_v$ be the \lq\lq conductor\rq\rq\ of $\psi_v$, 
i.e.\ $\psi_v$ is trivial on $\varpi_v^{-\delta_v}O_v$ but not trivial on $\varpi_v^{-\delta_v-1}O_v$.
Then $\sum_v \delta_v \cdot \deg v = 2 g_k -2$, 
where $g_k$ is the genus of $k$.

\subsection{Tamagawa measures}\label{sec 1.2}


For each place $v$ of $k$, 
choose the self-dual Haar measure $dx_v$ on $k_v$ with respect to the fixed additive character $\psi_v$, 
i.e.\ $\text{vol}(O_v, dx_v) = q_v^{-\delta_v/2}$. 
The Haar measure $dx = \prod_v dx_v$ on $k_\AA$ is then self-dual with respect to $\psi$, 
and $\text{vol}(k\backslash k_\AA, dx) = 1$. 
For the multiplicative group $k_v^\times$, 
we take the Haar measure
$$d^\times x_v:= \zeta_v(1) \cdot \frac{dx_v}{|x_v|_v},$$
where $\zeta_v(s) = (1-q_v^{-s})^{-1}$ is the local zeta function of $k$ at $v$.
Then $\text{vol}(O_v^\times, d^\times x_v) = q_v^{-\delta_v/2}$.
This gives us a Haar measure $d^\times x = \prod_v d^\times x_v$ on $k_\AA^\times$.\\

Given a separable quadratic algebra $K$ over $k$, let $\T_{K/k}$ and $\N_{K/k}$ be the trace and norm from $K$ to $k$, respectively.
Put $K_v := K\otimes_k k_v$. The Haar measures on $K_v$ and $K_v^\times$ are chosen as above for each place $v$ of $k$ (with respect to the character $\psi_v \circ \T_{K/k}$).
This induces a Haar measure $d^\times h_v$ on $K_v^\times/ k_v^\times$, and one has $\text{vol}(O_{K_v}^\times/O_v^\times,d^\times h_v) = q_v^{-(\ord_v(\dfk_K)+\delta_v)/2}$, where $\dfk_K \in \Div(k)$ is the discriminant divisor of $K$ over $k$. 
Let $K_\AA := K \otimes_k k_\AA$. We then take the Haar measure on $K_\AA^\times/k_\AA^\times$ to be $d^\times h:= \prod_v d^\times h_v$. Let $\varsigma_K$ be the quadratic character of $K/k$, i.e.\ 
$\varsigma_K: k^\times \backslash k_\AA^\times \rightarrow \CC^\times$ is the character with the kernel precisely equal to $k^\times \cdot \N_{K/k}(K_\AA^\times)$.
When $K$ is a field, one has $$\text{vol}(K^\times \backslash K_\AA^\times /k_\AA^\times, d^\times h) = 2 \cdot L(1, \varsigma_K).$$

By Hilbert's theorem 90, we may identify $K^\times/ k^\times$ with $K^1:= \{ a \in K^\times \mid \N_{K/k}(a) = 1\}$. 
Thus the chosen Haar measure $d^\times h$ on $K_\AA^\times/k_\AA^\times$ can be identified with a Haar measure $d^\times h^1$ on $K_\AA^1$.
In particular, for each place $v$ of $k$, we have
$$\text{vol}(O_{K_v}^1, d^\times h_v^1) = (\ord_v(\dfk_K)+1) \cdot q_v^{-(\ord_v(\dfk_K)+\delta_v)/2}.$$
${}$

Given a quaternion algebra $\Dcal$ over $k$, let $\tr_{\Dcal/k}$ and $\Nr_{\Dcal/k}$ be the reduced trace and norm from $\Dcal$ to $k$, respectively. Put $\Dcal_v := \Dcal\otimes_k k_v$ for each place $v$ of $k$. The Haar measure $db_v$ on $\Dcal_v$ for each $v$ is taken to be self-dual with respect to $\psi_v \circ \tr_{\Dcal/k}$. For the multiplicative group $\Dcal_v^\times$, we choose
$$d^\times \tilde{b}_v := \zeta_v(1) \cdot \frac{d b_v}{|\Nr_{\Dcal/k}(b_v)|_v}.$$
Globally, put $\Dcal_\AA:= \Dcal \otimes_k k_\AA$. We choose the Haar measure $d^\times b$ on $\Dcal_\AA^\times$ satisfying that for each maximal compact open subgroup $\Kcal = \prod_v \Kcal_v \subset \Dcal_\AA^\times$,
one has
$$\text{vol}(\Kcal , d^\times b) := \prod_v \text{vol}(\Kcal_v , d^\times \tilde{b}_v).$$
Via the exact sequence 
$$1 \rightarrow \Dcal^1 \rightarrow \Dcal^\times \rightarrow k^\times \rightarrow 1$$
the chosen Haar measures $d^\times b$ on $\Dcal_\AA^\times$ and $d^\times x$ on $k_\AA^\times$ determine a Haar measure $d^\times b_1$ on $\Dcal_\AA^1$.
Moreover, it is known that 
(cf.\ \cite[Theorem 3.3.1]{We2})
$$\text{vol}(\Dcal^\times k_\AA^\times \backslash \Dcal_\AA^\times, d^\times b) = 2 \quad \text{ and } \quad
\text{vol}(\Dcal^1 \backslash \Dcal_\AA^1, d^\times b_1) = 1.$$

\section{Theta series}\label{sec 2}

\subsection{Weil representation}\label{sec 2.1}

Let $(V, Q_V)$ be a non-degenerate quadratic space over $k$ with even dimension (then $\dim_k V \leq 4$).
Set $$\langle x,y\rangle_V:= Q_V(x+y)-Q_V(x)-Q_V(y), \quad \forall x,y \in V,$$ 
the bilinear form associated to $Q_V$.
Given an arbitrary $k$-algebra $R$, 
set $V(R):= V \otimes_k R$. 
For our purpose, the (local) Weil representation $\omega^V_v$ of $\big(\!\SL_2 \times \Oo(V)\big)(k_v)$ on the Schwartz space $S(V(k_v))$ is chosen with respect to $\overline{\psi}_v$ for every place $v$ of $k$.
We denote by $\omega^V:= \otimes_v \omega^V_v$ the (global) Weil representation of $\big(\!\SL_2 \times \Oo(V)\big)(k_\AA)$ on the Schwartz space $S(V(k_\AA))$.\\

Let $\GO(V)$ be the orthogonal similitude group of $V$ over $k$. 
Put
$$[\GL_2 \times \GO(V)]:=\{(g,h) \in \GL_2\times \GO(V)\mid \det(g) = \nu(h)\}.$$
Here $\nu(h)$ is the factor of similitude for $h \in \GO(V)$.
We extend $\omega^V$ to a representation (still denoted by $\omega^V$) of $[\GL_2\times \GO(V)](k_\AA)$ on $S(V(k_\AA))$ by the following: 
for every pair $(g,h) \in [\GL_2\times \GO(V)](k_\AA)$ and $\varphi \in S(V(k_\AA))$, 
set
$$\big(\omega^V(g,h)\varphi\big)(x):= |\det(g)|_{\AA}^{-\frac{1}{2}} \cdot  \big(\omega^V(\begin{pmatrix}1&0\\0&\det(g)^{-1}\end{pmatrix}g)\varphi\big)(h^{-1}x), \quad \forall x \in V(k_\AA).$$
Given $(g,h) \in [\GL_2\times \GO(V)](k_\AA)$ and $\varphi \in S(V(k_\AA))$, let
$$\theta^V(g,h;\varphi):= \sum_{x \in V(k)} \big(\omega^V(g,h)\varphi\big)(x).$$
For every $\varphi \in S(V(k_\AA))$, the theta series $\theta^V(\cdot,\cdot;\varphi)$ 
is invariant by $[\GL_2\times \GO(V)](k)$ via left multiplications.

\subsection{Quadratic theta series}\label{sec 2.2}

Let $K$ be a quadratic field extension of $k$. 
Given $\gamma \in k^\times$, 
let $(V_{(\gamma)}, Q_{(\gamma)}):= (K, \gamma \cdot \N_{K/k})$, 
where $\N_{K/k}$ is the norm form on $K/k$. 
Then one has $\GO(V_{(\gamma)}) \cong K^\times \rtimes \langle \tau_K\rangle$, 
where $\tau_K(x):= \bar{x}$ for every $x \in K = V_{(\gamma)}(k)$. 
We may identify 
$K^1:= \{h \in K\mid \N_{K/k}(h) = 1\}$ with the special orthogonal group $\SO(V_{(\gamma)})$. \\

Let $\GL_2^{{}^{\!\!+_{\!\tiny K}}}$ be the image of natural projection of $[\GL_2\times \GO(V_{(\gamma)})]$ into $\GL_2$.
Given a unitary Hecke character $\chi$ on $K^\times \backslash K_\AA^\times$ and $\varphi \in S(V_{(\gamma)}(k_\AA))$, 
set
$$\theta_\chi^{(\gamma)}(g; \varphi):= 
\int_{K^1 \backslash K_\AA^1} \theta^{V_{(\gamma)}}(g, r h_g; \varphi) \chi( r h_g) dr, \quad \forall g \in \GL_2^{{}^{\!\!+_{\!\tiny K}}}(k_\AA).$$
Here $h_g \in K_\AA^\times$ is chosen so that $\N_{K/k}(h_g) = \det(g)$.
Then $\theta_\chi^{(\gamma)}(\cdot ; \varphi)$ is invariant under $\GL_2^{{}^{\!\!+_{\!\tiny K}}}(k)$ by left multiplications, 
and has a central character equal to $\varsigma_K \cdot \chi\big|_{k_\AA^\times}$, 
where $\varsigma_K$ is the quadratic Hecke character of $K/k$.
When $\gamma = 1$, we will denote by $\theta^K(\cdot,\cdot;\varphi)$ and $\theta_\chi^K(\cdot;\varphi)$ the quadratic theta series
$\theta^{V_{(1)}}(\cdot,\cdot;\varphi)$ and $\theta_\chi^{(1)}(\cdot;\varphi)$, respectively.

\subsubsection{Whittaker functions}\label{sec 2.2.1}

Given $\gamma \in k^\times$,
the Whittaker function (with respect to $\overline{\psi}$) attached to $\theta_\chi^{(\gamma)}(\cdot;\varphi)$ for $\varphi \in S(V_\gamma(k_\AA))$ is:
$$W_\chi^{(\gamma)}(g;\varphi):= \int_{k\backslash k_\AA} \theta_\chi^{(\gamma)} \left( \begin{pmatrix} 1 & n \\ 0 & 1\end{pmatrix} g;\varphi\right)\psi(n) dn.$$
Then
$$W_\chi^{(\gamma)}\left(\begin{pmatrix}1&n\\0&1\end{pmatrix}g;\varphi\right) = \overline{\psi(n)}\cdot W_\chi^{(\gamma)}(g;\varphi), \quad \forall g \in \GL_2^{{}^{\!\!+_{\!\tiny K}}}(k_\AA) \text{ and } n \in k_\AA.$$
It is straightforward that:

\begin{lem}\label{lem 2.1}
Suppose $\varphi = \otimes_v \varphi_v \in S(V_\gamma(k_\AA))$ is a pure tensor. 
Then $W_\chi^{(\gamma)}(\cdot ;\varphi)$ is factorizable. More precisely, 
for $g = (g_v)_v \in \GL_2^{{}^{\!\!+_{\!\tiny K}}}(\AA_k)$, choose $h_g = (h_{g,v})_v \in K_\AA^\times$ so that $\det(g) = \N_{K/k}(h_g)$.
One has $W_\chi^{(\gamma)}(g ;\varphi) = \prod_v W_{\chi,v}^{(\gamma)}(g_v ;\varphi_v)$, where
$$
W_{\chi,v}^{(\gamma)}(g_v ;\varphi_v) := \int_{K_v^1} \big(\omega^{V_{(\gamma)}}_v(g_v, r_v h_{g,v})\varphi_v\big)(1) \cdot \chi_v(r_v h_{g,v}) dr_v.
$$
\end{lem}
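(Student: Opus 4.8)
The plan is to prove factorizability of the Whittaker function $W_\chi^{(\gamma)}(\cdot;\varphi)$ by unwinding the integral defining it and using the fact that everything in sight already decomposes as a restricted tensor product over the places $v$. First I would write out $W_\chi^{(\gamma)}(g;\varphi)$ by substituting the definitions: it is the integral over $k\backslash k_\AA$ of $\theta_\chi^{(\gamma)}\!\left(\begin{pmatrix}1&n\\0&1\end{pmatrix}g;\varphi\right)\psi(n)\,dn$, and in turn $\theta_\chi^{(\gamma)}$ is the integral over $K^1\backslash K_\AA^1$ of the theta kernel $\theta^{V_{(\gamma)}}$ against $\chi$. Substituting the definition of the theta kernel as a sum over $x\in V_{(\gamma)}(k) = K$, I would interchange the sum over $x\in K$ with the integral over $k\backslash k_\AA$, and use the standard device of collapsing the $k$-sum against the $k\backslash k_\AA$-integral: the orbit structure of $K$ under the additive translations coming from $n$ (acting via the Weil representation of $\begin{pmatrix}1&n\\0&1\end{pmatrix}$, which multiplies $\varphi(x)$ by $\overline{\psi}(n\gamma\N_{K/k}(x))$ or similar) picks out exactly the $x$ with $\N_{K/k}(x) = 1$, i.e.\ $x \in K^1$. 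After this step the integral over $k\backslash k_\AA$ disappears and one is left with a sum over $x\in K^1$ of an integral over $K^1\backslash K_\AA^1$; folding the sum into the integral replaces $K^1\backslash K_\AA^1$ by $K_\AA^1$ and reduces the sum to the single term $x=1$ (since $K^1$ acts simply transitively on itself). This yields
$$W_\chi^{(\gamma)}(g;\varphi) = \int_{K_\AA^1} \big(\omega^{V_{(\gamma)}}(g, r h_g)\varphi\big)(1)\cdot \chi(r h_g)\,dr.$$

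With this closed form in hand, factorizability is immediate: $\varphi = \otimes_v\varphi_v$ is a pure tensor, the Weil representation is by construction $\omega^{V_{(\gamma)}} = \otimes_v \omega^{V_{(\gamma)}}_v$, the evaluation at $1 \in V_{(\gamma)}(k_\AA)$ factors as the product of evaluations at $1\in V_{(\gamma)}(k_v)$, the character $\chi = \otimes_v\chi_v$ factors, $h_g = (h_{g,v})_v$ was chosen componentwise, and the Haar measure $dr$ on $K_\AA^1$ is the product measure $\prod_v dr_v$ fixed in Section~\ref{sec 1.2}. Hence the integrand over $K_\AA^1 = \prod_v' K_v^1$ is a product of local integrands, and by Fubini (the product converges because at almost all $v$ the local integral equals $1$, as $\varphi_v$ is the characteristic function of a self-dual lattice and $g_v, h_{g,v}$ are integral and $\chi_v$ unramified) the global integral factors as $\prod_v W_{\chi,v}^{(\gamma)}(g_v;\varphi_v)$ with $W_{\chi,v}^{(\gamma)}$ as displayed in the statement.

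The main obstacle — really the only nonroutine point — is the orbit/collapsing argument that converts the $k\backslash k_\AA$ integral together with the full sum over $x \in K$ into just the $x \in K^1$ contribution. This requires being careful about how $\begin{pmatrix}1&n\\0&1\end{pmatrix}$ acts through $\omega^{V_{(\gamma)}}$ — it scales $\varphi(x)$ by a character in $n$ depending on $Q_{(\gamma)}(x) = \gamma\N_{K/k}(x)$ — and then invoking that $\psi$ is a nontrivial character trivial on $k$, so that $\int_{k\backslash k_\AA}\psi\big((1-\gamma\N_{K/k}(x))n\big)\,dn$ vanishes unless $\gamma\N_{K/k}(x) = 1$, and equals $\mathrm{vol}(k\backslash k_\AA)=1$ otherwise; one should also note that the Whittaker function here is taken on $\GL_2^{{}^{\!\!+_{\!\tiny K}}}(k_\AA)$ where $\det(g) = \N_{K/k}(h_g)$, and the similitude normalization in the definition of the extended $\omega^{V_{(\gamma)}}$ makes the norm-one locus the relevant one. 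Everything after that — interchanging sum and integral (justified by absolute convergence of the theta series), unfolding $K^1\backslash K_\AA^1$ to $K_\AA^1$, and the final Euler product — is formal.
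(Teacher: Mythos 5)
Your overall strategy is exactly the ``straightforward'' computation the paper has in mind (the paper in fact prints no proof of Lemma~\ref{lem 2.1}): substitute the definitions, use that $\begin{pmatrix}1&n\\0&1\end{pmatrix}$ acts through the extended $\omega^{V_{(\gamma)}}$ by the multiplier $\overline{\psi}\big(n\,Q_{(\gamma)}(x)\big)$ (the similitude normalization cancels, as you can check directly), integrate over $k\backslash k_\AA$ to isolate one norm locus, fold the remaining rational sum against the $K^1\backslash K_\AA^1$-integration to get a single integral over $K_\AA^1$, and then factor the integrand and the product measure place by place, with the almost-everywhere value $1$ guaranteeing convergence of the Euler product.

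The one step that does not hold water as written is the reduction to the single term $x=1$. As you yourself record at the end, the $n$-integration kills all terms except those with $Q_{(\gamma)}(x)=\gamma\,\N_{K/k}(x)=1$, i.e.\ $\N_{K/k}(x)=\gamma^{-1}$; for $\gamma\neq 1$ this is \emph{not} the norm-one locus, and the appeal to ``the similitude normalization of the extended $\omega^{V_{(\gamma)}}$'' cannot fix this, since that normalization does not change the value of $Q_{(\gamma)}$ at rational vectors. What the unfolding actually yields for general $\gamma$ is evaluation at some $x_0\in K$ with $\N_{K/k}(x_0)=\gamma^{-1}$ (and identical vanishing of $W^{(\gamma)}_\chi$ if no such $x_0$ exists); evaluation at $1$ comes out only for $\gamma=1$, or if one takes the Fourier coefficient against $\psi(\gamma n)$ rather than $\psi(n)$. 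This imprecision is arguably already present in the lemma as printed, and it is harmless for the paper since the lemma is only invoked with $\gamma=1$ (the factor $W^{K}_{\chi,v}$ in Proposition~\ref{prop 3.2}), where your argument is complete; but a correct proof should either restrict to that case or restate the lemma accordingly. Two smaller points to make explicit: the folding step $\sum_{\epsilon\in K^1}\int_{K^1\backslash K_\AA^1}=\int_{K_\AA^1}$ uses that $\chi$ is trivial on $K^\times\supset K^1$ so that $\chi(\epsilon^{-1}rh_g)=\chi(rh_g)$, and the interchange of sum and integrals is unproblematic because $K^1\backslash K_\AA^1$ is compact ($K$ being a field) and the theta series converges absolutely.
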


\subsection{Quaternionic theta series}\label{sec 2.3}

Let $\Dcal$ be a quaternion algebra over $k$, 
and denote by $\Nr_{\Dcal/k}$ (resp.\ $\tr_{\Dcal/k}$) the reduced norm (resp.\ trace) on $\Dcal/k$. 
Let $(V_\Dcal,Q_{V_\Dcal}):= (\Dcal,\Nr_{\Dcal/k})$. 
Then we have the following exact sequence:
$$
1\longrightarrow k^\times \longrightarrow  (\Dcal^\times \times \Dcal^\times) \rtimes \langle \tau_\Dcal \rangle \longrightarrow \GO(V_\Dcal) \longrightarrow 1.
$$ 
Here:
\begin{itemize}
\item $k^\times$ embeds into $\Dcal^\times \times \Dcal^\times$ diagonally; 
\item every pair $(b_1,b_2) \in \Dcal^\times \times \Dcal^\times$ is sent to 
$$[b_1,b_2]:= (x \mapsto b_1 x b_2^{-1},\ x \in \Dcal) \in \GO(V_\Dcal);$$
\item $\tau_\Dcal(x):= \bar{x} = \tr_{\Dcal/k}(x)-x$ for every $x \in \Dcal$.
\end{itemize}

Let $\Pi^\Dcal$ be an infinite dimensional automorphic representation of $\Dcal_\AA^\times$ which is cuspidal if $\Dcal = \Mat_2$. Suppose the central character of $\Pi^\Dcal$ is unitary. Let $\Pi$ be the automorphic cuspidal representation of $\GL_2(k_\AA)$ corresponding to $\Pi^\Dcal$ via the Jacquet-Langlands correspondence. Given $\varphi \in S(V_\Dcal(k_\AA))$ and $\phi \in \Pi$, for $b_1,b_2 \in \Dcal_\AA^\times$ we set
$$\theta^\Dcal(b_1,b_2;\phi,\varphi):= \int_{\SL_2(k)\backslash \SL_2(k_\AA)} 
\phi \big(g^1 \alpha(b_1b_2^{-1})\big) \cdot \theta^{V_\Dcal}\big(g^1 \alpha(b_1b_2^{-1}), [b_1,b_2]; \varphi\big) dg^1.$$
Here $\alpha(b):= \begin{pmatrix}1&0\\0&\Nr_{\Dcal/k}(b) \end{pmatrix}$ for every $b \in \Dcal_\AA^\times$, and $dg^1$ is the Tamagawa measure on $\SL_2(k_\AA)$ (cf.\ Section~\ref{sec 1.2}).
It is clear that $\theta^\Dcal(\cdot,\cdot;\phi,\varphi)$ is invariant by $\Dcal^\times \times \Dcal^\times$ via left multiplications.
Put $$\Theta^\Dcal(\Pi):= \left\{\theta^\Dcal(\cdot,\cdot;\phi , \varphi)\mid \phi \in \Pi, \varphi \in S(V_\Dcal(k_\AA))\right\}.$$
The Shimizu correspondence says (cf.\ \cite[Theorem 1]{Shi}):

\begin{thm}\label{thm 2.2}
Given an infinite dimensional automorphic representation $\Pi^\Dcal$ of $\Dcal_\AA^\times$ (cuspidal if $\Dcal = \Mat_2$), suppose the central character of $\Pi^\Dcal$ is unitary. Then 
\begin{eqnarray}\label{eqn Sh}
\Theta^\Dcal(\Pi) &=& \{ f_1 \otimes \bar{f}_2 : \Dcal_\AA^\times \times \Dcal_\AA^\times \rightarrow \CC \mid f_1, f_2 \in \Pi^\Dcal \}_{\CC-\text{\rm span}}.
\end{eqnarray}
Here $f_1\otimes \bar{f}_2\ (b,b') := f_1(b) \cdot \bar{f}_2(b')$ for every $b,b' \in \Dcal_\AA^\times$.
Consequently, let $\widetilde{\Pi}^\Dcal$ be the contragredient representation of $\Pi^\Dcal$. Identifying $\widetilde{\Pi}^\Dcal$ with the space $\{ \bar{f} \mid f \in \Pi^\Dcal\}$ via the Petersson inner product, the equality~\text{\rm (\ref{eqn Sh})} induces an isomorphism
$$
\text{\bf Sh}: \Theta^\Dcal(\Pi) \cong \Pi^\Dcal \otimes \widetilde{\Pi}^\Dcal.
$$
\end{thm}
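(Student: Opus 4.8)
The plan is to follow Shimizu's original argument, adapted to the function field setting, in two stages: first establish the equality of spaces~(\ref{eqn Sh}), then deduce the isomorphism $\mathbf{Sh}$ by a multiplicity-one argument. For the inclusion $\Theta^\Dcal(\Pi) \subseteq \{f_1 \otimes \bar f_2\}_{\CC\text{-span}}$, I would start from the theta kernel $\theta^{V_\Dcal}(g^1\alpha(b_1b_2^{-1}),[b_1,b_2];\varphi)$ and use the exact sequence
$$1 \to k^\times \to (\Dcal^\times \times \Dcal^\times)\rtimes\langle\tau_\Dcal\rangle \to \GO(V_\Dcal)\to 1$$
to regard $\theta^\Dcal(\cdot,\cdot;\phi,\varphi)$ as a function on $\Dcal_\AA^\times\times\Dcal_\AA^\times$ obtained by integrating the $\GL_2$-automorphic form $\phi$ against the theta kernel over $\SL_2(k)\backslash\SL_2(k_\AA)$. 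The key point is that this integral is a theta lift from $\GL_2$ to $[\GL_2\times\GO(V_\Dcal)]$ restricted along the two $\Dcal_\AA^\times$-factors; since $\Pi$ is cuspidal with unitary central character, the lift is nonzero, and the Jacquet--Langlands transfer $\Pi^\Dcal$ is exactly the automorphic representation generated by the resulting functions in each variable. Concretely, one shows that for fixed $b_2$ the function $b_1 \mapsto \theta^\Dcal(b_1,b_2;\phi,\varphi)$ lies in $\Pi^\Dcal$ and for fixed $b_1$ the function $b_2 \mapsto \overline{\theta^\Dcal(b_1,b_2;\phi,\varphi)}$ lies in $\Pi^\Dcal$, using the $\Oo(V_\Dcal)$-equivariance of the Weil representation $\omega^{V_\Dcal}$ and the fact that the central character of $\Pi^\Dcal$ is unitary so that $\bar f$ again transforms in $\Pi^\Dcal$. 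This places $\theta^\Dcal(\cdot,\cdot;\phi,\varphi)$ inside $\Pi^\Dcal\otimes\overline{\Pi^\Dcal}$, hence in the right-hand span.

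For the reverse inclusion, I would argue that $\Theta^\Dcal(\Pi)$ is a nonzero $\Dcal_\AA^\times\times\Dcal_\AA^\times$-subrepresentation of $\Pi^\Dcal\boxtimes\overline{\Pi^\Dcal}$. Nonvanishing follows from the nonvanishing of the global theta lift for the dual pair in question, which in turn reduces (by the Rankin--Selberg unfolding to be carried out in Section~\ref{sec 3}, or by a direct Fourier-coefficient computation) to nonvanishing of an $L$-value that is guaranteed since $\Pi$ is cuspidal. Because $\Pi^\Dcal$ is irreducible, $\Pi^\Dcal\boxtimes\overline{\Pi^\Dcal}$ is irreducible as a representation of the product group, so any nonzero subrepresentation is the whole thing; this forces $\Theta^\Dcal(\Pi) = \Pi^\Dcal\otimes\overline{\Pi^\Dcal}$, giving~(\ref{eqn Sh}). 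Identifying $\widetilde\Pi^\Dcal$ with $\{\bar f : f\in\Pi^\Dcal\}$ via the Petersson pairing, this is precisely $\Pi^\Dcal\otimes\widetilde\Pi^\Dcal$, and the linear map $\mathbf{Sh}$ sending $\theta^\Dcal(\cdot,\cdot;\phi,\varphi)$ to its image in $\Pi^\Dcal\otimes\widetilde\Pi^\Dcal$ is well defined and surjective; injectivity is automatic once one checks the map is $\Dcal_\AA^\times\times\Dcal_\AA^\times$-equivariant and both sides are the irreducible $\Pi^\Dcal\boxtimes\widetilde\Pi^\Dcal$, so it is an isomorphism by Schur.

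The main obstacle I anticipate is the nonvanishing of the theta lift, i.e.\ showing $\Theta^\Dcal(\Pi)\neq 0$ in the function field setting with odd characteristic. In the classical case this rests on Rallis's tower property and on local nonvanishing results together with a global period/$L$-value identity; over function fields one must make sure the Weil representation $\omega^{V_\Dcal}$ (defined here with respect to $\overline\psi$), the extension to $[\GL_2\times\GO(V_\Dcal)]$, and the Tamagawa-measure normalizations from Section~\ref{sec 1.2} are all compatible, and that the requisite Siegel--Weil input is available --- which is exactly what Appendices~\ref{sec A} and~\ref{sec B} supply. A secondary technical point is bookkeeping the passage between $\GO(V_\Dcal)$ and $\Dcal^\times\times\Dcal^\times$ modulo the diagonal $k^\times$, making sure the similitude condition $\det(g)=\nu(h)$ is matched by the choice $\alpha(b_1b_2^{-1})$ and that the central characters cancel correctly; this is routine but must be done carefully to get the clean statement~(\ref{eqn Sh}).
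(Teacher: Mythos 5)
First, note that the paper itself contains no proof of this statement: Theorem~\ref{thm 2.2} is quoted as the Shimizu correspondence with a citation to \cite[Theorem 1]{Shi}, so the only meaningful comparison is with Shimizu's original argument, which your sketch tries to reconstruct. Measured against that, there is a genuine gap at the central step. You assert that for fixed $b_2$ the function $b_1 \mapsto \theta^\Dcal(b_1,b_2;\phi,\varphi)$ ``lies in $\Pi^\Dcal$'' and that ``the Jacquet--Langlands transfer $\Pi^\Dcal$ is exactly the automorphic representation generated by the resulting functions.'' But the $\Oo(V_\Dcal)$-equivariance of $\omega^{V_\Dcal}$ only shows that these functions are automorphic forms on $\Dcal_\AA^\times \times \Dcal_\AA^\times$ with the correct central character; identifying the representation they generate with $\Pi^\Dcal \boxtimes \widetilde{\Pi}^\Dcal$ (equivalently, that the theta lift realizes the Jacquet--Langlands correspondence) \emph{is} the content of the theorem. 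In Shimizu's proof this identification is carried out, e.g.\ by computing the unramified local components (Hecke eigenvalues) of the lift at almost all places from the explicit Weil representation and then invoking strong multiplicity one for $\Dcal_\AA^\times$, or by matching the characterizing properties of the Jacquet--Langlands transfer. As written, your argument uses the conclusion to prove the forward inclusion, so it is circular; once the forward inclusion is actually established, your irreducibility/Schur argument for the reverse inclusion and for $\textbf{Sh}$ is fine.

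The nonvanishing step is also too thin. ``Reduces to nonvanishing of an $L$-value guaranteed since $\Pi$ is cuspidal'' is not an argument: for $\Dcal = \Mat_2$ the standard route is to compute a Whittaker--Fourier coefficient of the lift directly (as in Proposition~\ref{prop A.5}) and use genericity of the cuspidal $\Pi$; for division $\Dcal$ one needs an explicit period computation, e.g.\ the diagonal integral of the lift expressed through the metaplectic Siegel--Weil identity of Remark~\ref{rem B.2}, together with a choice of local data making the local factors nonzero. Be careful, moreover, that you cannot simply import Appendix~\ref{sec A} for this purpose: Theorem~\ref{thm A.2} and Proposition~\ref{prop A.L} are proved there \emph{using} Theorem~\ref{thm 2.2}, so only the ingredients independent of it (the Whittaker computation and Appendix~\ref{sec B}) are legitimately available. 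Finally, the convergence of the defining integral of $\theta^\Dcal$ (cuspidality of $\phi$ when $\Dcal=\Mat_2$, compactness of $\Dcal^\times k_\AA^\times\backslash\Dcal_\AA^\times$ in the division case) deserves at least a remark, since the theta kernel is not rapidly decreasing.
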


\subsubsection{Local Shimizu correspondence}\label{sec 2.3.1}

We may identify $\Pi$ with $\otimes_v \Pi_v$ naturally via the Whittaker model of $\Pi$ (with respect to $\psi$). 
Let $v$ be a place of $k$. For $\phi_v \in \Pi_v$ and $\varphi_v \in S(V_\Dcal(k_v))$, put
\begin{eqnarray}
&&\theta_v^{\Dcal,o}(b_v,b_v';\phi_v,\varphi_v) \nonumber \\
&:=&
\frac{\zeta_v(2)}{L_v(1,\Pi,\text{Ad})} \cdot
\int_{\U(k_v) \backslash \SL_2(k_v)} W_{\phi_v}\big(g_v^1 \alpha(b_v b_v'^{-1})\big) \cdot 
\big(\omega^\Dcal_v(g_v^1 \alpha(b_v b_v'^{-1}), [b_v,b_v'])\varphi_v\big)(1) d g_v^1. \nonumber
\end{eqnarray}
Here $W_{\phi_v}$ is the Whittaker function of $\phi_v$ (with respect to $\psi_v$), the map $\alpha$ is defined in the above of Theorem~\ref{thm 2.2}, and $U \subset \SL_2$ is the standard unipotent subgroup. Observe that when $v$ is \lq\lq good\rq\rq\ we have $\theta_v^{\Dcal,o}(b_1,b_v';\phi_v,\varphi_v) =1$ (cf.\ Theorem~\ref{thm A.2} (1)). 
Moreover, for pure tensors $\phi = \otimes_v \phi_v \in \Pi$ and $\varphi = \otimes_v \phi_v \in S(V_\Dcal(k_\AA))$ we have
(cf.\ Theorem~\ref{thm A.2})
\begin{eqnarray}\label{eqn 2.2}
&& \int_{\Dcal^\times k_\AA^\times \backslash \Dcal_\AA^\times} \theta^\Dcal(b b_1,b b_2;\phi,\varphi) d^\times b \\
&=& \frac{2 L(1,\Pi,\text{Ad})}{\zeta_k(2)} \cdot \prod_v \theta_v^{\Dcal,o}(b_{1,v},b_{2,v};\phi_v,\varphi_v), \quad \forall b_1,b_2 \in \Dcal_\AA^\times. \nonumber
\end{eqnarray}
Put $$\Theta_v^\Dcal(\Pi_v):= \{ \theta_v^{\Dcal,o}(\cdot,\cdot;\phi_v,\varphi_v)\mid \phi_v \in \Pi_v,\ \varphi_v \in S(V_\Dcal(k_v))\}.$$
Then the above equality implies that (cf.\ Proposition~\ref{prop A.L})
$$\Theta_v^\Dcal(\Pi_v) = \{ f_v \otimes \tilde{f}_v : \Dcal_v^\times \times \Dcal_v^\times \rightarrow \CC \mid f_v \in \Pi_v^\Dcal,\ \tilde{f}_v \in \widetilde{\Pi}_v^\Dcal\}_{\CC-\text{\rm span}}.$$
Here $f_v \otimes \tilde{f}_v$ is viewed as a matrix coefficient:
$$f_v \otimes \tilde{f}_v (b_v,b_v'):= \langle \Pi_v^\Dcal(b_v) f_v, \widetilde{\Pi}_v^\Dcal (b_v') \tilde{f}_v\rangle_v^\Dcal, \quad \forall b_v,b_v' \in \Dcal_v^\times,$$
where $\langle\cdot,\cdot \rangle_v^\Dcal: \Pi_v^\Dcal \times \widetilde{\Pi}_v^\Dcal \rightarrow \CC$ is the natural duality pairing.
Consequently, we have an isomorphism $\textbf{Sh}_v : \Theta_v^\Dcal(\Pi_v) \cong \Pi_v^\Dcal \otimes \widetilde{\Pi}_v^\Dcal$.

\begin{rem}\label{rem 2.3}
Let $\langle \cdot,\cdot \rangle_{\text{Pet}}^\Dcal: \Pi^\Dcal \times \widetilde{\Pi}^\Dcal \rightarrow \CC$ be the Petersson pairing.
The equality~(\ref{eqn 2.2}), together with $\textbf{Sh}$ and $\textbf{Sh}_v$, provide us a way to indentify $\Pi^\Dcal$ (resp.\ $\widetilde{\Pi}^\Dcal$) with $\otimes_v \Pi^\Dcal_v$ (resp.\ $\otimes_v \widetilde{\Pi}^\Dcal_v$) so that for pure tensors $f = \otimes_v f_v \in \Pi^\Dcal$ and $\tilde{f} = \otimes_v \tilde{f}_v \in \widetilde{\Pi}^\Dcal$, we have
$$\langle f, \tilde{f}\rangle_{\text{Pet}}^\Dcal = \frac{2 L(1,\Pi,\text{Ad})}{\zeta_k(2)} \cdot \prod_v \langle f_v, \tilde{f}_v\rangle_{v}^\Dcal.$$
\end{rem}

\section{Zeta integrals and Rankin-Selberg method}\label{sec 3}

\subsection{Siegel Eisenstein series}\label{sec 3.1}

Let $K$ be a quadratic field over $k$.
Fix $\gamma \in k^\times$. Recall that we put $(V_{(\gamma)},Q_{(\gamma)}) = (K, \gamma \cdot \N_{K/k})$.
 Given $\varphi \in S(V_{(\gamma)}(k_\AA))$, the Siegel section associated to $\varphi$ is defined by
$$\Phi_\varphi(g,s):= \frac{|a|_{\AA}^s}{|b|_{\AA}^s} \cdot \varsigma_K(b) \cdot \big(\omega^{V_{(\gamma)}}(\kappa) \varphi\big)(0)$$
for every $g = \begin{pmatrix} a& n \\ 0 & b \end{pmatrix} \kappa \in \GL_2(k_\AA)$ with $a,b \in k_\AA^\times$, $n \in k_\AA$, $\kappa \in \SL_2(O_{\AA})$, and $s \in \CC$.
Here $\varsigma_K$ is the quadratic character of $K/k$.
The Siegel Eisenstein series associated to $\varphi$ is
$$E(g,s,\varphi):= \sum_{\gamma \in \B(k)\backslash \GL_2(k)}\Phi_\varphi(\gamma g,s),\quad \forall g \in \GL_2(k_\AA),$$
which converges absolutely for $\re(s)>1$.
It is known that $E(g,s,\varphi)$ has meromorphic continuation to the whole complex $s$-plane and satisfies a functional equation with the symmetry between $s$ and $1-s$. 
Note that $E(g,s,\varphi)$ is always holomorphic at the central critical point $s=1/2$, and the following formula holds (cf.\ \cite[Theorem 0.1]{Wei1}):

\begin{thm}\label{thm 3.1}
{\rm (The Siegel-Weil formula)} Fix $\gamma \in k^\times$. Given $\varphi \in S(V_{(\gamma)}(k_\AA))$, one has
$$E(g,\frac{1}{2},\varphi) = \frac{1}{L(1,\varsigma_K)}\cdot \theta^{(\gamma)}_{\mathbf{1}_K}(g,\varphi), \quad \forall g \in \GL_2^{{}^{\!\!+_{\!\tiny K}}}(k_\AA),$$
where
$\mathbf{1}_K$ is the principal character on $K_\AA^\times$.
\end{thm}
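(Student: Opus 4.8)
The plan is to prove the identity by comparing the Fourier expansions of both sides along the standard unipotent subgroup $\U$. Both $E(g,\tfrac12,\varphi)$ and $\theta^{(\gamma)}_{\mathbf{1}_K}(g,\varphi)$ are automorphic forms of moderate growth on $\GL_2^{{}^{\!\!+_{\!\tiny K}}}(k_\AA)$ with central character $\varsigma_K$: the Eisenstein series is holomorphic at $s=\tfrac12$ as already noted, and the theta integral converges absolutely because $K$ is a field, so that $K^1\backslash K_\AA^1$ is compact. Since an automorphic form on $\GL_2^{{}^{\!\!+_{\!\tiny K}}}(k_\AA)$ of moderate growth and fixed central character whose entire $\U$-Fourier expansion vanishes is identically zero, it suffices to check that, after multiplying the left-hand side by $L(1,\varsigma_K)$, the two forms have the same constant term and the same $\beta$-th Fourier coefficient for every $\beta\in k^\times$. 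Conceptually this amounts to recognising both sides as the value at $s=\tfrac12$ of the $\GL_2$-Eisenstein series attached to the pair of Hecke characters $(\mathbf{1},\varsigma_K)$ --- the Siegel section $\Phi_\varphi(\cdot,s)$ being a flat section in the corresponding (degenerate) principal series and $\theta^{(\gamma)}_{\mathbf{1}_K}$ being the theta lift to $\GL_2^{{}^{\!\!+_{\!\tiny K}}}$ of the trivial character of $K^1$, i.e.\ the automorphic induction of $\mathbf{1}_K$ --- but the cleanest route is the coefficient-by-coefficient verification below.

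I would treat the non-constant coefficients first; they are Eulerian on both sides. On the Eisenstein side, by the Bruhat decomposition the $\beta$-th coefficient is
$$\int_{k_\AA}\Phi_\varphi\Bigl(\begin{pmatrix}0&-1\\1&0\end{pmatrix}\begin{pmatrix}1&n\\0&1\end{pmatrix}g,\tfrac12\Bigr)\overline{\psi}(\beta n)\,dn=\prod_v W_{\beta,v}\bigl(g_v,\tfrac12,\varphi_v\bigr),$$
a product of local Whittaker integrals of the Siegel section; on the theta side, by the factorisation of Lemma~\ref{lem 2.1} (and its evident analogue for a general $\beta$) the $\beta$-th coefficient is $\prod_v\int_{K_v^1}\bigl(\omega^{V_{(\gamma)}}_v(g_v,r_vh_{g,v})\varphi_v\bigr)(x_{\beta,v})\,dr_v$ with $x_{\beta,v}\in K_v$ satisfying $\gamma\N_{K/k}(x_{\beta,v})=\beta$, i.e.\ a local orbital integral over $K_v^1$. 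The key local input is then the place-by-place identity
$$W_{\beta,v}\bigl(g_v,\tfrac12,\varphi_v\bigr)=\frac{1}{L_v(1,\varsigma_{K,v})}\int_{K_v^1}\bigl(\omega^{V_{(\gamma)}}_v(g_v,r_vh_{g,v})\varphi_v\bigr)(x_{\beta,v})\,dr_v.$$
For all but finitely many $v$ (unramified, $\varphi_v$ the standard vector) both sides are computed explicitly from the spherical data and agree; at the remaining places I would compute directly, using that $\bigl(\begin{smallmatrix}1&n\\0&1\end{smallmatrix}\bigr)$ acts in $\omega^{V_{(\gamma)}}_v$ by multiplication by $\overline{\psi}_v\bigl(n\,Q_{(\gamma)}(\cdot)\bigr)$, the diagonal torus by dilation of the variable twisted by $\varsigma_{K,v}$, and the Weyl element by the Fourier transform on $S(V_{(\gamma)}(k_v))$, so that the Siegel-section integral collapses to a Fourier-transform identity matching the orbital integral.

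For the constant terms I would unfold both. The constant term of $E(g,s,\varphi)$ along the Borel is $\Phi_\varphi(g,s)+\bigl(M(s)\Phi_\varphi\bigr)(g,s)$, with $M(s)$ the standard intertwining operator. For $\theta^{(\gamma)}_{\mathbf{1}_K}$, since $Q_{(\gamma)}(x)=\gamma\N_{K/k}(x)=0$ if and only if $x=0$ ($K$ being a field), integrating the theta kernel over $n\in k\backslash k_\AA$ annihilates every $x\neq0$; the surviving $x=0$ term is independent of $r$ and, by the similitude relation $\varsigma_K(\det g)=1$ on $\GL_2^{{}^{\!\!+_{\!\tiny K}}}(k_\AA)$, equals $\Phi_\varphi(g,\tfrac12)$, so that it integrates over $r\in K^1\backslash K_\AA^1$ to $2L(1,\varsigma_K)\,\Phi_\varphi(g,\tfrac12)$ via $\text{vol}(K^1\backslash K_\AA^1)=2L(1,\varsigma_K)$. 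Matching this with the constant term $L(1,\varsigma_K)\bigl(\Phi_\varphi(g,\tfrac12)+(M(\tfrac12)\Phi_\varphi)(g,\tfrac12)\bigr)$ of $L(1,\varsigma_K)\,E(g,\tfrac12,\varphi)$ thus reduces to the identity $(M(\tfrac12)\Phi_\varphi)(g,\tfrac12)=\Phi_\varphi(g,\tfrac12)$ on $\GL_2^{{}^{\!\!+_{\!\tiny K}}}(k_\AA)$; this I would establish place-by-place, using that $M_v(\tfrac12)$ acts on the Weil-representation section $\Phi_{\varphi_v}$ through a constant governed by the local Weil index $\gamma_v(\psi_v,V_{(\gamma)})$, together with Weil's reciprocity $\prod_v\gamma_v(\psi_v,V_{(\gamma)})=1$ and the functional equation of $L(s,\varsigma_K)$. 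Combining the two halves of the Fourier expansion yields the theorem.

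The main obstacle is this constant-term analysis together with the finitely many ramified-place computations in the Whittaker-coefficient identity: one must keep careful track of the Weil indices, the local $L$-factors and the Tamagawa normalisations, and handle each bad place individually. It is essential here that $V_{(\gamma)}$ is anisotropic of dimension $2$ --- this is precisely why $E(g,s,\varphi)$ is holomorphic at the centre $s=\tfrac12$, so that no second-term (regularised Siegel--Weil) phenomenon intervenes and the $x=0$ contribution alone accounts for the full constant term of the theta series.
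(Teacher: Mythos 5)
Your strategy --- proving the identity by matching the full $\U$-Fourier expansions, handling the $\beta\neq 0$ coefficients through a place-by-place Whittaker-versus-orbital-integral identity and the constant terms through $M(\tfrac12)\Phi_\varphi=\Phi_\varphi$ on the Siegel--Weil sections --- is the standard route to this statement and is sound in outline. Be aware, however, that the paper does not prove Theorem~\ref{thm 3.1} at all: it imports it from \cite[Theorem 0.1]{Wei1}, so the only in-paper comparison is with the proof of the analogous metaplectic statement in Appendix~\ref{sec B}. That argument is deliberately softer than yours: it needs only (i) exact matching of constant terms, (ii) proportionality of the nonzero Fourier coefficients with an \emph{unspecified} constant, and (iii) orthogonality of both sides to cusp forms, and in the three-dimensional case step (i) is cheap because the intertwining contribution to the constant term vanishes at the relevant point. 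In your binary case at the centre $s=\tfrac12$ the intertwining term does not vanish, and the whole constant-term comparison is equivalent to the assertion that $M(\tfrac12)$ acts as $+1$ on the global (coherent) Siegel--Weil section; you correctly identify this as a product of local Weil-index constants which telescopes to $1$ because $V_{(\gamma)}$ is a global quadratic space, but you do not carry it out. That computation, together with the ramified-place identity $W_{\beta,v}(g_v,\tfrac12,\varphi_v)=L_v(1,\varsigma_{K,v})^{-1}\int_{K_v^1}\bigl(\omega_v^{V_{(\gamma)}}(g_v,r_vh_{g,v})\varphi_v\bigr)(x_{\beta,v})\,dr_v$ under the specific Tamagawa normalizations of Section~\ref{sec 1.2}, is where essentially all the content of the theorem lies, and it is exactly what the cited reference supplies; as written, your text announces these identities rather than proving them.

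Two smaller points. First, for $\beta$ not globally represented by $\gamma\N_{K/k}$ your Euler-product expression for the $\beta$-th coefficient of the theta side does not literally apply (the global coefficient is then $0$); you need the Hasse norm theorem for $K/k$ to see that in this case some local Whittaker factor on the Eisenstein side vanishes too, so that both sides agree trivially. Second, the appeal to moderate growth is unnecessary over a function field: smoothness alone makes the $\U$-Fourier expansion a pointwise finite sum, so matching all coefficients already forces equality; this is harmless but worth trimming. With the local computations actually carried out (or with the softer Appendix~\ref{sec B}-style argument substituted for the exact nonzero-coefficient matching), your outline would constitute a complete proof, but in its present form it defers precisely the steps that make the theorem nontrivial.
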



\subsection{Zeta integrals}\label{sec 3.2}

Let $\Dcal$ be a quaternion algebra over $k$. Given a quadratic field extension $K$ over $k$ with an embedding $K\hookrightarrow \Dcal$, we write $\Dcal = K + Kj$ where $j^2 = \gamma \in k^\times$ and $j b = \bar{b} j $ for every $b \in K$.
Set $(V_\Dcal, Q_{V_\Dcal}):= (\Dcal,\Nr_{\Dcal/k})$. Then 
$$(V_{\Dcal},Q_{V_\Dcal}) = (V_{(1)}, Q_{(1)}) \oplus (V_{(-\gamma)}, Q_{(-\gamma)}).$$

Let $\Pi$ be an automorphic cuspidal representation of $\GL_2(k_\AA)$ with a unitary central character denoted by $\eta$. Given a Hecke character $\chi : K^\times \backslash K_\AA^\times \rightarrow \CC^\times$, suppose that $\chi$ is unitary and $\eta \cdot \chi\big|_{k_\AA^\times} = 1$. For $\phi \in \Pi$ and $\varphi \in S(V_\Dcal(k_\AA))$, we are interested in the following (global) zeta integral:
writing $\varphi = \sum_i \varphi_{1,i} \oplus \varphi_{2,i}$ with $\varphi_{1,i} \in S(V_{(1)}(k_\AA))$ and $\varphi_{2,i} \in S(V_{(-\gamma)}(k_\AA))$, we set
$$\Zcal(s; \phi, \varphi):= \sum_i \int_{\Z(k_\AA) \GL_2^{{}^{\!\!+_{\!\tiny K}}}(k)\backslash \GL_2^{{}^{\!\!+_{\!\tiny K}}}(k_\AA)} \phi(g) \theta_{\chi}^K(g,\varphi_{1,i}) E(g,s;\varphi_{2,i}) dg.$$
Here $\Z$ is the center of $\GL_2$, and $dg$ is the Tamagawa measure on $\GL_2(k_\AA)$ restricting to $\GL_2^{{}^{\!\!+_{\!\tiny K}}}(k_\AA)$ (cf.\ Section~\ref{sec 1.2}).
This integral is a meromorphic function on the complex $s$-plane.
Moreover, one asserts:

\begin{prop}\label{prop 3.2}
Given pure tensors $\phi = \otimes_v \phi_v \in \Pi$ and $\varphi = \otimes_v \varphi_v \in S(V_\Dcal(k_\AA))$,
one has
$$\Zcal(s; \phi, \varphi) = 
\prod_v \Zcal_v(s; \phi_v, \varphi_v),$$
where $\Zcal_v(s; \phi_v, \varphi_v)$ is equal to
$$\int_{K_v^\times} \left(\int_{\SL_2(O_v)} W_{\phi_v}\left(\begin{pmatrix}\N_{K/k}(h) & 0 \\0&1\end{pmatrix} \kappa_v^1\right)  
 \cdot \Big(\omega^\Dcal_v(\kappa_v^1)\varphi_v \Big)(\bar{h}) \, d \kappa_v^1\right) \chi_v(h) |\N_{K/k}(h)|_v^{s-\frac{1}{2}} d^\times h; $$
and $W_{\phi_v}$ is the local Whittaker function associated to $f_v$ (with respect to $\psi_v$).
\end{prop}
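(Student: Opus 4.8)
The plan is to prove the factorization by unfolding the Eisenstein series in the zeta integral and then untangling the resulting $\GL_2^{+_K}$-integral into an $\SL_2$-part and a $K^\times$-part, at which point everything splits into local factors. First I would insert the definition of $E(g,s;\varphi_{2,i})$ as a sum over $\B(k)\backslash\GL_2(k)$ and use the standard unfolding: since the quadratic theta series $\theta_\chi^K(g,\varphi_{1,i})$ and the cusp form $\phi(g)$ are left $\GL_2^{+_K}(k)$-invariant, collapsing the Borel coset sum replaces the integral over $\Z(k_\AA)\GL_2^{+_K}(k)\backslash\GL_2^{+_K}(k_\AA)$ by an integral over $\Z(k_\AA)\U(k)\backslash\GL_2^{+_K}(k_\AA)$ against $\Phi_{\varphi_{2,i}}(g,s)$, where $\U$ is the standard unipotent. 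The integral over $\U(k)\backslash\U(k_\AA)$ then extracts the $\overline{\psi}$-Whittaker coefficient of $\phi\cdot\theta_\chi^K$; here I would use that the Whittaker function of a product decomposes, or more precisely that, since $\phi$ is cuspidal and $\theta_\chi^K$ is the relevant theta series, the $\U$-integral of $\phi(g)\theta_\chi^K(g,\varphi_{1,i})$ produces $W_{\phi}(g)$ times the value $(\omega^K(g)\varphi_{1,i})(h)$-type quantity coming from the Whittaker expansion of the quadratic theta series in Section~\ref{sec 2.2.1}.

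Next I would use the Iwasawa-type decomposition of $\GL_2^{+_K}(k_\AA)$ adapted to the similitude constraint: an element of $\GL_2^{+_K}(k_\AA)$ can be written, modulo $\Z(k_\AA)\U(k_\AA)$, as $\begin{pmatrix}\N_{K/k}(h)&0\\0&1\end{pmatrix}\kappa$ with $h\in K_\AA^\times$ and $\kappa\in\SL_2(O_\AA)$ (using $\det=\nu(\cdot)$ to match the $K$-norm), so that the remaining integral becomes an integral over $K^\times k_\AA^\times\backslash K_\AA^\times$ — or after the theta-series unfolding over $K^1\backslash K_\AA^1$ has been absorbed, over $K^\times\backslash K_\AA^\times$, and ultimately over all of $K_\AA^\times$ once the $\chi$-twist and the remaining sum over $K^\times$ are combined — against $\SL_2(O_\AA)$. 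At this stage the $\chi(h)|\N_{K/k}(h)|_\AA^{s-1/2}$ weight appears from $\Phi_{\varphi_{2,i}}$ and from the central character bookkeeping, and the integrand is manifestly a product over places of $W_{\phi_v}$, $(\omega_v^\Dcal(\kappa_v^1)\varphi_v)(\bar h_v)$, $\chi_v$, and $|\N_{K/k}(h_v)|_v^{s-1/2}$, because $\varphi=\otimes_v\varphi_v$, $\phi=\otimes_v\phi_v$ have factorizable Whittaker functions, and the Tamagawa measures on $K_\AA^\times$ and $\SL_2(k_\AA)$ are products of local ones (Section~\ref{sec 1.2}). Summing $\varphi=\sum_i\varphi_{1,i}\oplus\varphi_{2,i}$ back to $\varphi$ and recognizing $\omega^K\oplus\omega^{(-\gamma)}$ recombines into $\omega^\Dcal$ via $(V_\Dcal,Q_{V_\Dcal})=(V_{(1)},Q_{(1)})\oplus(V_{(-\gamma)},Q_{(-\gamma)})$ gives exactly the claimed $\Zcal_v(s;\phi_v,\varphi_v)$.

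The main obstacle I expect is the careful bookkeeping of the several quotients and the similitude condition: keeping track of which copies of $k^\times$, $k_\AA^\times$, $K^1$, and $K_\AA^\times$ are being integrated or quotiented at each unfolding step, making sure the theta-series integral over $K^1\backslash K_\AA^1$ defining $\theta_\chi^K$ combines cleanly with the $K^\times\backslash K_\AA^\times/k_\AA^\times$-type integral coming from the $\GL_2^{+_K}$ reduction to yield a single clean integral over $K_v^\times$ locally, and confirming that the central characters match so that the $\eta\cdot\chi|_{k_\AA^\times}=1$ hypothesis makes the $\Z(k_\AA)$-quotient consistent. A secondary technical point is justifying the interchange of the sum over $\B(k)\backslash\GL_2(k)$ with the integrals, which is fine for $\re(s)>1$ by the absolute convergence of $E(g,s;\varphi)$ together with the rapid decay of the cusp form $\phi$ (and moderate growth of $\theta_\chi^K$), and then extending the identity to all $s$ by meromorphic continuation.
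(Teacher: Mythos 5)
Your proposal is correct and follows essentially the same route as the paper's proof: unfold the Siegel Eisenstein series via the Rankin--Selberg method, pass to the Whittaker coefficient of $\phi\cdot\theta_\chi^K$, use the Iwasawa decomposition of $\GL_2^{{}^{\!\!+_{\!\tiny K}}}(k_\AA)$, and then invoke the factorization of the theta Whittaker function (Lemma~\ref{lem 2.1}) to merge the norm-torus integral with the $K^1$-integral into the local integral over $K_v^\times$, recombining $\omega^{V_{(1)}}\oplus\omega^{V_{(-\gamma)}}$ into $\omega^\Dcal$. This matches the paper's argument in both structure and the key bookkeeping points you flag.
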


\begin{proof}
Without loss of generality, assume $\varphi = \varphi_1 \oplus \varphi_2$.
Let $\B^{{}^{\!\!+_{\!\tiny K}}}:= B \cap \GL_2^{{}^{\!\!+_{\!\tiny K}}} = \Z \cdot \T_1^{{}^{+_{\!\tiny K}}} \cdot \U$,
where 
$$\T_1:= \begin{pmatrix} * & 0 \\ 0&1\end{pmatrix},\ \T_1^{{}^{+_{\!\tiny K}}}:= \T_1 \cap \GL_2^{{}^{\!\!+_{\!\tiny K}}},\text{ and } \U := \begin{pmatrix} 1 & * \\ 0&1\end{pmatrix}.$$
Put $\GL_2^{{}^{\!\!+_{\!\tiny K}}}(O_{\AA}):= \GL_2(O_{\AA}) \cap \GL_2^{{}^{\!\!+_{\!\tiny K}}}(k_\AA)$. From the Iwasawa decomposition 
$$\GL_2^{{}^{\!\!+_{\!\tiny K}}}(k_\AA) = \B^{{}^{\!\!+_{\!\tiny K}}}(k_\AA) \cdot \GL_2^{{}^{\!\!+_{\!\tiny K}}}(O_{\AA}),$$
we write the zeta integral $\Zcal(s; \phi, \varphi)$ as
\begin{eqnarray}
&& \Zcal(s; \phi, \varphi) \nonumber \\
&= & \int_{\Z(O_{\AA}) \backslash \GL_2^{{}^{\!\!+_{\!\tiny K}}}(O_{\AA})}
\int_{T_1^{{}^{+_{\!\tiny K}}}(k_\AA)}
W_\phi(t \kappa^1) W_\chi^K(t \kappa^1; \varphi_1) \big(\omega^{(-\gamma)}(\kappa^1)\varphi_2\big)(0)|t|_{\AA}^{s-1} d^\times t d \kappa, \nonumber
\end{eqnarray}
where for every $\kappa \in \GL_2(O_{\AA})$, we put $\kappa^1 := \begin{pmatrix} \det(\kappa)^{-1} & 0 \\ 0 & 1\end{pmatrix} \kappa \in \SL_2(O_{\AA})$.
Note that for each place $v$ of $k$, we have the following exact sequence:
$$
\xymatrix@C=2pc@R=2pc{
1 \ar@{->}[r] & \{\pm 1\} \ar@{->}[r] & \SL_2(O_v) \ar@{->}[r]  & \Z(O_v) \backslash \GL_2(O_v) \ar@{->}[r]^{\quad \quad \det} & \displaystyle\frac{O_v^\times}{(O_v^\times)^2}  \ar@{->}[r]  & 1.
}$$
Therefore when $\varphi$ and $f$ are pure tensors, one has
$$ \Zcal(s; \phi, \varphi) =  
\prod_v \Zcal_v' (s; \phi_v,\varphi_v),$$
where
$$\Zcal_v'(s; \phi_v,\varphi_v) :=  \int_{\SL_2(O_v)}
\int_{T_1^{{}^{+_{\!\tiny K}}}(k_v)}
W_{\phi_v}(t_v \kappa^1_v) W_{\chi,v}^K(t \kappa^1_v; \varphi_{1,v}) \big(\omega_v^{(-\gamma)}(\kappa^1_v)\varphi_{2,v}\big)(0)|t|_v^{s-1} d^\times t_v d \kappa^1_v.
$$

By Lemma~\ref{lem 2.1}, the local zeta integral $\Zcal_v'(s; \phi_v,\varphi_v)$ becomes
\begin{eqnarray}
 &&\Zcal_v'(s; \phi_v,\varphi_v)  \nonumber \\
&=&  \int_{\SL_2(O_v)} 
\int_{T_1^{{}^{+_{\!\tiny K}}}(k_v)}
W_{\phi_v}(t_v \kappa^1_v) \nonumber \\
&&  \cdot \left(
\int_{K_v^1} \big(\omega_v^K(\kappa^1_v)\varphi_{1,v}\big)(\overline{r_v h_{t,v}}) \chi_v(r_v h_{t,v}) d r_v\right)
\big(\omega_v^{(-\gamma)}(\kappa^1_v)\varphi_{2,v}\big)(0)|t|_v^{s-\frac{1}{2}} d^\times t_v d \kappa^1_v
 \nonumber \\
 &=& 
 \Zcal_v(s; \phi_v,\varphi_v). \nonumber
\end{eqnarray}

\end{proof}

The following results are straightforward.

\begin{cor}\label{cor 3.3}
$(1)$ Suppose $v$ is \lq\lq good\rq\rq, i.e.\ the conductor of $\psi_v$ is trivial, $\Pi_v$ is an unramified principal series, $\phi_v \in \Pi_v$ is spherical with $W_{\phi_v}(1) = 1$, $v$ is unramified in $K$, $\chi_v$ is unramified, $\ord_v(\gamma) = 0$, and $\varphi = \varphi_1 \oplus \varphi_2$ with $\varphi_1 = \varphi_2 = \mathbf{1}_{O_{K_v}}$.
We have
$$\Zcal_v(s; \phi_v, \varphi_v) = \frac{L_v(s, \Pi \times \chi)}{L_v(2s,\varsigma_{K})}.$$
$(2)$ Given $\phi_v \in \Pi_v$ and $\varphi_v \in S(V_\Dcal(k_v))$, put $$\Zcal_v^o(s; \phi_v, \varphi_v):= \frac{L_v(2s,\varsigma_{K})}{L_v(s, \Pi\times \chi)}\cdot 
\Zcal_v(s; \phi_v, \varphi_v).$$
Then $\Zcal_v^o(s; \phi_v, \varphi_v) = 1$ for all but finitely many $v$, and
$$\Zcal(s; \phi,\varphi) =  \frac{L(s,\Pi \times \chi)}{L(2s,\varsigma_K)} \cdot  
\prod_v \Zcal_v^o(s; \phi_v, \varphi_v)$$
for every pure tensors $\phi \in \Pi$ and $\varphi \in S(V_\Dcal(k_\AA))$.\\
$(3)$ The (local) zeta integral $\Zcal_v^o(s; \phi_v, \varphi_v)$ always converges at $s=1/2$.
\end{cor}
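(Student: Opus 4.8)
The plan is to treat the three parts in order, since each rests on the previous one and on Proposition~\ref{prop 3.2}. For part (1), I would start from the explicit local integral
\[
\Zcal_v(s;\phi_v,\varphi_v)=\int_{K_v^\times}\left(\int_{\SL_2(O_v)}W_{\phi_v}\!\left(\begin{pmatrix}\N_{K/k}(h)&0\\0&1\end{pmatrix}\kappa_v^1\right)\big(\omega^\Dcal_v(\kappa_v^1)\varphi_v\big)(\bar h)\,d\kappa_v^1\right)\chi_v(h)\,|\N_{K/k}(h)|_v^{s-\frac12}\,d^\times h
\]
and exploit the \textbf{good} hypotheses: the conductor of $\psi_v$ is trivial, so $\omega^\Dcal_v(\kappa_v^1)$ fixes $\mathbf 1_{O_{K_v}}\oplus\mathbf 1_{O_{K_v}}=\mathbf 1_{O_{\Dcal_v}}$ for $\kappa_v^1\in\SL_2(O_v)$ (standard Schr\"odinger-model computation, using $\ord_v(\gamma)=0$ so that $O_{\Dcal_v}=O_{K_v}\oplus O_{K_v}j$ self-dual). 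Hence the inner $\SL_2(O_v)$-integral collapses to $W_{\phi_v}\!\begin{pmatrix}\N_{K/k}(h)&0\\0&1\end{pmatrix}\cdot\mathbf 1_{O_{K_v}}(\bar h)$ up to the volume of $\SL_2(O_v)$, which is $1$ for the Tamagawa normalization at a good place. The integral over $K_v^\times$ then becomes $\int_{\N_{K/k}(h)\in O_v}W_{\phi_v}\begin{pmatrix}\N_{K/k}(h)&0\\0&1\end{pmatrix}\chi_v(h)|\N_{K/k}(h)|_v^{s-1/2}d^\times h$, and I would unfold this over $K_v^1\backslash K_v^\times\cong \N_{K/k}(K_v^\times)$, absorbing the fibre integral $\int_{O_{K_v}^1}\chi_v\,d^\times h^1=\vol(O_{K_v}^1)=1$ at a good place. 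What remains is the classical Rankin--Selberg local computation: with $W_{\phi_v}$ the spherical Whittaker function given by the Casselman--Shalika formula and $\chi_v$ unramified, the sum $\sum_{n\ge0}W_{\phi_v}\!\begin{pmatrix}\varpi_v^n&0\\0&1\end{pmatrix}\chi_v(\varpi_v)^n q_v^{-n(s-1/2)}$ (over split primes of $K$, with the analogous inert/ramified bookkeeping) telescopes to $L_v(s,\Pi\times\chi)/L_v(2s,\varsigma_K)$ by the usual Langlands--Shahidi / Jacquet identity for $\GL_2\times\GL_1$ over $K_v$.

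For part (2): the identity $\Zcal_v^o(s;\phi_v,\varphi_v)=1$ for almost all $v$ is immediate from part (1), since $\phi=\otimes_v\phi_v$ and $\varphi=\otimes_v\varphi_v$ are good at all but finitely many places, and by construction $\Zcal_v^o=\frac{L_v(2s,\varsigma_K)}{L_v(s,\Pi\times\chi)}\Zcal_v$. Then Proposition~\ref{prop 3.2} gives $\Zcal(s;\phi,\varphi)=\prod_v\Zcal_v(s;\phi_v,\varphi_v)=\prod_v\frac{L_v(s,\Pi\times\chi)}{L_v(2s,\varsigma_K)}\Zcal_v^o(s;\phi_v,\varphi_v)=\frac{L(s,\Pi\times\chi)}{L(2s,\varsigma_K)}\prod_v\Zcal_v^o(s;\phi_v,\varphi_v)$, where the Euler products converge for $\re(s)$ large and the finite product $\prod_v\Zcal_v^o$ is entire; this is just a rearrangement of the convergent product, legitimate because only finitely many factors differ from $1$.

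For part (3): here the point is that $\Zcal_v^o(s;\phi_v,\varphi_v)$, being $\Zcal_v(s;\phi_v,\varphi_v)$ divided by the (nonzero, finite) local $L$-factor $L_v(s,\Pi\times\chi)/L_v(2s,\varsigma_K)$ at $s=1/2$, is holomorphic near $s=1/2$ provided $\Zcal_v(s;\phi_v,\varphi_v)$ converges there. Over a function field everything is rational in $q_v^{-s}$, so I would argue that the inner double integral defining $\Zcal_v$ is a finite sum — the $\SL_2(O_v)$-integral is over a compact group of a locally constant integrand, and the $K_v^\times$-integral, after the substitution to $\N_{K/k}(h)\in k_v^\times$, is supported on $\ord_v(\N_{K/k}(h))\ge N_0$ for some $N_0$ depending on $\varphi_v$ (since $\omega^\Dcal_v(\kappa_v^1)\varphi_v$ has bounded support uniformly in $\kappa_v^1\in\SL_2(O_v)$, the $\bar h$-slot forces an upper bound on $|\N_{K/k}(h)|_v^{-1}$), while the Whittaker function $W_{\phi_v}$ vanishes for $\ord_v(\N_{K/k}(h))$ sufficiently negative. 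Thus the tail is a geometric series in $q_v^{-s}$ with ratio of modulus $q_v^{-\re(s)+1/2}\cdot$(bounded) times the Satake parameters, which — using that $\Pi_v$ is unitary (tempered or complementary with exponents in $(-1/2,1/2)$) — converges at $s=1/2$; the ramified-place cases reduce to the same geometric-tail estimate with the asymptotics of $W_{\phi_v}$ on the torus (Whittaker functions decay like a power of $|\cdot|_v$ times a polynomial in $\ord_v$, so convergence at $s=1/2$ holds).

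The main obstacle is part (1), specifically getting the normalizations exactly right: one must verify that with the Tamagawa/self-dual measures fixed in Section~\ref{sec 1.2} the volumes $\vol(\SL_2(O_v))$ and $\vol(O_{K_v}^1,d^\times h_v^1)$ are precisely $1$ at a good place (trivial conductor $\delta_v=0$ and $\ord_v(\dfk_K)=0$), and that the Weil representation $\omega^\Dcal_v$, defined with respect to $\overline\psi_v$, really does fix $\mathbf 1_{O_{\Dcal_v}}$ under $\SL_2(O_v)$ — the Weil-index/$\gamma$-factor subtleties at $2$ are absent since $q$ is odd, but one still needs the quadratic space $(\Dcal_v,\Nr)$ to be unimodular over $O_v$, which is exactly where $\ord_v(\gamma)=0$ and $v$ unramified in $K$ enter. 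Once these are pinned down, the remaining computation is the textbook $\GL_2\times\GL_1$ Rankin--Selberg unramified calculation and presents no difficulty.
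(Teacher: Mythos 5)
Your plan is correct and is essentially the argument the paper intends: the paper gives no proof at all (it merely calls Corollary~\ref{cor 3.3} ``straightforward''), and your route --- collapsing the $\SL_2(O_v)$-integral at a good place via the self-dual lattice $O_{K_v}\oplus O_{K_v}j$ and then invoking the classical unramified Rankin--Selberg/Casselman--Shalika identity (where the central character condition $\eta\cdot\chi\big|_{k_\AA^\times}=1$ is what makes the numerator match $L_v(2s,\varsigma_K)^{-1}$) for (1), rearranging the Euler product through Proposition~\ref{prop 3.2} for (2), and combining the uniform compact support of $\omega_v^\Dcal(\kappa_v^1)\varphi_v$ with the decay of $W_{\phi_v}$ on the torus (unitarity giving exponents strictly inside $(-1/2,1/2)$) for (3) --- is exactly the standard verification. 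The one issue you flag, pinning down the volumes $\mathrm{vol}(\SL_2(O_v))$ and $\mathrm{vol}(O_{K_v}^1,d^\times h_v^1)$ at good places so that no stray local constants appear, is likewise left implicit in the paper, so carrying out that bookkeeping is the only remaining work and does not affect the structure of your argument.
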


\section{Central critical values of zeta integrals}\label{sec 4}

Let $\Dcal$, $K$, $\Pi$, $\eta$, and $\chi$ be as in the above section. For pure tensors $\phi = \otimes_v \phi_v \in \Pi$ and $\varphi = \otimes \varphi_v \in S(V_\Dcal(k_\AA))$, we shall express $\Zcal(1/2; \phi,\varphi)$ (resp.\ $\Zcal_v(1/2;\phi_v,\varphi_v)$) in terms of global (resp.\ local) \lq\lq toric period integrals\rq\rq\ of the pair $(\phi,\varphi)$ (resp.\ ($\phi_v,\varphi_v$)).

\subsection{Local case}\label{sec 4.1}

We may rewrite $\Zcal_v(1/2; \phi_v,\varphi_v)$ for $\phi_v \in \Pi_v$ and $\varphi_v \in S(V_\Dcal(k_v))$ as follows:

\begin{prop}\label{prop 4.1}
Given $\phi_v \in \Pi_v$ and $\varphi_v \in S(V_\Dcal(k_v))$, we have
\begin{eqnarray}
\Zcal_v(\frac{1}{2}; \phi_v,\varphi_v) &=& \frac{L_v(1,\Pi,\text{\rm Ad})}{\zeta_v(2)} \cdot \int_{K_v^\times/ k_v^\times} \theta_v^{\Dcal,o}( h_v,1;\phi_v,\varphi_v) \chi_v (h_v) d^\times h_v. \nonumber 
\end{eqnarray}
Here $\theta_v^{\Dcal,o}(\cdot,\cdot;\phi_v,\varphi_v)$ is defined in \text{\rm Section~\ref{sec 2.3.1}}.
\end{prop}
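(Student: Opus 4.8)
The plan is to start from the explicit formula for $\Zcal_v(s;\phi_v,\varphi_v)$ in Proposition~\ref{prop 3.2}, specialize $s = 1/2$, and manipulate the integral over $K_v^\times$ by folding in the definition of $\theta_v^{\Dcal,o}$ from Section~\ref{sec 2.3.1}. First I would substitute $s = 1/2$ into the formula of Proposition~\ref{prop 3.2}, so that the absolute-value factor $|\N_{K/k}(h)|_v^{s-1/2}$ disappears, leaving
\begin{eqnarray}
\Zcal_v(\tfrac{1}{2};\phi_v,\varphi_v) &=& \int_{K_v^\times} \left(\int_{\SL_2(O_v)} W_{\phi_v}\!\left(\begin{pmatrix}\N_{K/k}(h) & 0 \\ 0 & 1\end{pmatrix}\kappa_v^1\right) \cdot \big(\omega_v^\Dcal(\kappa_v^1)\varphi_v\big)(\bar{h})\, d\kappa_v^1\right)\chi_v(h)\, d^\times h. \nonumber
\end{eqnarray}
The key algebraic observation is that the diagonal matrix and the argument $\bar h$ of $\varphi_v$ should be repackaged in terms of the Weil-representation action of $\alpha(h_v)$ together with the similitude element $[h_v,1] \in \GO(V_\Dcal)$: one has $\begin{pmatrix}\N_{K/k}(h)&0\\0&1\end{pmatrix} = \alpha(h)\cdot\begin{pmatrix}1&0\\0&\N_{K/k}(h)^{-1}\end{pmatrix}\cdot(\text{something in }\SL_2)$ and $[h,1]^{-1}$ sends $1$ to $h^{-1}$, while $\bar h = \N(h)\, h^{-1}$ up to the norm scaling absorbed by the similitude normalization in the definition of $\omega^V$ on $[\GL_2\times\GO(V)]$. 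Carefully tracking these identifications should convert the inner $\SL_2(O_v)$-integral, after extending it to an integral over $\U(k_v)\backslash\SL_2(k_v)$ using that $W_{\phi_v}$ transforms by $\psi_v$ under $\U$ and that $\big(\omega_v^\Dcal\big)$ of a unipotent acts compatibly, into exactly the integral defining $\theta_v^{\Dcal,o}(h_v,1;\phi_v,\varphi_v)$ up to the normalizing constant $\dfrac{\zeta_v(2)}{L_v(1,\Pi,\mathrm{Ad})}$ that appears in that definition.

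Concretely, the second step is to verify that the inner $d\kappa_v^1$ integral over the compact group $\SL_2(O_v)$ can be replaced by the integral over $\U(k_v)\backslash\SL_2(k_v)$ appearing in $\theta_v^{\Dcal,o}$. This rests on the Iwasawa decomposition $\SL_2(k_v) = \U(k_v)\T_1(k_v)\SL_2(O_v)$ together with the fact that, inside $\Zcal_v$, the torus part $\T_1$ has already been integrated against the character $\chi_v|\N|_v^{s-1/2}$ and reassembled into the outer $K_v^\times$-integral — so at $s=1/2$ what remains is literally the reduced integral over $\U(k_v)\backslash\SL_2(k_v)$ after a change of variables $h \mapsto$ (torus coordinate). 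Then the third step is purely bookkeeping of constants: match the factor $\zeta_v(2)/L_v(1,\Pi,\mathrm{Ad})$ built into $\theta_v^{\Dcal,o}$ against its reciprocal $L_v(1,\Pi,\mathrm{Ad})/\zeta_v(2)$ placed out front in the statement, and check that the measure $d^\times h$ on $K_v^\times$ pushes forward to the Tamagawa measure $d^\times h_v$ on $K_v^\times/k_v^\times$ — here one uses that the central character condition $\eta\cdot\chi|_{k_\AA^\times}=1$ makes the integrand $\theta_v^{\Dcal,o}(h_v,1;\cdots)\chi_v(h_v)$ invariant under $k_v^\times$, so the integral over $K_v^\times$ descends to one over $K_v^\times/k_v^\times$ (any residual volume factor of $k_v^\times$ being absorbed into the Tamagawa normalization fixed in Section~\ref{sec 1.2}).

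I expect the main obstacle to be the precise similitude-versus-$\SL_2$ bookkeeping: the definition of $\omega^V$ on $[\GL_2\times\GO(V)]$ involves the twist by $|\det g|_\AA^{-1/2}$ and the conjugation by $\mathrm{diag}(1,\det(g)^{-1})$, so one must be scrupulous that $\big(\omega_v^\Dcal(\kappa_v^1)\varphi_v\big)(\bar h)$ really equals $\big(\omega_v^\Dcal(\kappa_v^1\alpha(h),[h,1])\varphi_v\big)(1)$ after accounting for the factor $|\N_{K/k}(h)|_v^{1/2}$ coming from the similitude action on $V_\Dcal$ (which is four-dimensional, so the Weil action of a scalar $\N(h)$ contributes $|\N(h)|_v^{\dim/2}$-type factors), and that this factor is exactly what cancels the leftover $|\N_{K/k}(h)|_v^{s-1/2}$ at $s=1/2$ — a near-miss here would produce a spurious power of $|\N(h)|_v$ and break the identity. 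A secondary subtlety is ensuring the inner integral is genuinely over the \emph{reductive} quotient $\U(k_v)\backslash\SL_2(k_v)$ and not $\U(O_v)\backslash\SL_2(O_v)$; this is handled by absorbing the $\T_1$-part of the Iwasawa decomposition into the $K_v^\times$-variable via $h\mapsto(\text{its norm on the diagonal})$, which is exactly the move that Proposition~\ref{prop 3.2} has set up. Everything else — convergence at $s=1/2$ (already granted by Corollary~\ref{cor 3.3}(3)), factorizability of Whittaker functions (Lemma~\ref{lem 2.1}), and the definition of $\theta_v^{\Dcal,o}$ — is available from the earlier parts of the paper.
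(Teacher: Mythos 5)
Your plan is essentially the paper's own computation run in reverse: the paper starts from $\theta_v^{\Dcal,o}(h_v,1;\phi_v,\varphi_v)$, uses the identity $\big(\omega_v^\Dcal(g_v^1\alpha(h_v),[h_v,1])\varphi_v\big)(1)=|\N_{K/k}(h_v)|_v^{-1}\big(\omega_v^\Dcal(\alpha(h_v)^{-1}g_v^1\alpha(h_v))\varphi_v\big)(h_v^{-1})$, the conjugated Iwasawa decomposition $\SL_2(k_v)=\B^1(k_v)\cdot\big(\alpha(h_v)\SL_2(O_v)\alpha(h_v)^{-1}\big)$ with Jacobian $|\N_{K/k}(h_v)|_v$, and the relation $\eta_v\cdot\chi_v\big|_{k_v^\times}=1$ to rewrite the $\U(k_v)\backslash\SL_2(k_v)$-integral as an iterated $\int_{k_v^\times}\int_{\SL_2(O_v)}$ of exactly the integrand in Proposition~\ref{prop 3.2}, after which the outer $\chi_v$-integral over $K_v^\times/k_v^\times$ reassembles $\Zcal_v(\tfrac{1}{2};\phi_v,\varphi_v)$ --- precisely the similitude, Iwasawa and measure bookkeeping you outline. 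One caveat: the passage between $\int_{K_v^\times}$ and $\int_{K_v^\times/k_v^\times}$ is by Fubini, with the $k_v^\times$-fiber integral turning into the torus part of $\U(k_v)\backslash\SL_2(k_v)$ (as you correctly say in your second paragraph), not a descent of a $k_v^\times$-invariant integrand with an absorbed volume factor as your last paragraph suggests, since $k_v^\times$ has infinite volume and the integrand of $\Zcal_v$ itself is not $k_v^\times$-invariant.
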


\begin{proof}
Given $h_v \in K_v^\times$ and $g_v^1 \in \SL_2(k_v)$, one has
$$\omega^\Dcal_v\big(g_v^1\alpha(h_v),[h_v,1]\big)\varphi_v (1)
= |\N_{K/k}(h_v)|_v^{-1} \cdot \omega_v^\Dcal\big(\alpha(h_v)^{-1} g_v^1\alpha(h_v)\big)(h_v^{-1}).$$
From the Iwasawa decomposition:
$$\SL_2(k_v) = B^1(k_v) \cdot \Big(\alpha(h_v) \SL_2(O_v) \alpha(h_v)^{-1}\Big),$$
we may write
$$dg_v^1 = |\N_{K/k}(h_v)|_v \cdot d_L b^1_v \cdot d_R \kappa_v^1.$$
Thus
\begin{eqnarray}
&& \frac{L_v(1,\Pi,\text{Ad})}{\zeta_v(2)} \cdot \theta_v^{\Dcal,o}(h_v,1;\phi_v,\varphi_v) \nonumber \\
&=& \int_{\SL_2(O_v)} \int_{k_v^\times}
W_{\phi_v}\left(\begin{pmatrix} a_v & 0 \\ 0 & a_v^{-1} \end{pmatrix} \alpha(h_v) \kappa_v^1 \right)
\left(\omega^\Dcal_v(\begin{pmatrix} a_v & 0 \\ 0 & a_v^{-1} \end{pmatrix} \kappa_v^1)\varphi_v\right)(h_v^{-1}) \frac{d^\times a_v}{|a|_v^2} d\kappa_v^1 \nonumber \\
&=&
\int_{k_v^\times} \left(\int_{\SL_2(O_v)} W_{\phi_v}\left(\begin{pmatrix} \N_{K/k}(a_v h_v)&0 \\ 0& 1\end{pmatrix} \kappa_v^1 \right)
\big(\omega^\Dcal_v (\kappa_v^1)\varphi_v\big)(a_v \overline{h_v}) d\kappa_v^1\right) \chi_v(a_v) d^\times a_v \nonumber
\end{eqnarray}
Therefore the result follows immediately.
\end{proof}

Let $\Pi^\Dcal = \otimes_v \Pi^\Dcal_v$ be, if exists, the automorphic representation of $\Dcal_\AA^\times$ corresponding to $\Pi$ via the Jacquet-Langlands correspondence.
For $\phi_v \in \Pi_v$ and $\varphi_v \in S(V_\Dcal(k_v))$, we may view $\theta_v^{\Dcal,o}(\cdot,\cdot;\phi_v,\varphi_v)$ as a matrix coefficient of $\Pi_v^\Dcal \otimes \widetilde{\Pi}_v^\Dcal$ (cf.\ Proposition~\ref{prop A.L}).
Define the local toric period integral of the pair $(\phi_v,\varphi_v)$ by
\begin{eqnarray}\label{eqn 4.1}
\,\,\,\,\,\,\,\,\,\,\,\, \Tcal_v(\phi_v,\varphi_v):=  \frac{L_v(1,\varsigma_{K}) L_v(1,\Pi, \text{Ad})}{L_v(\frac{1}{2},\Pi \times \chi) \zeta_v(2)} \cdot 
\int_{K_v^\times/ k_v^\times} \theta_v^{\Dcal,o}( h_v,1;\phi_v,\varphi_v) \chi_v (h_v) d^\times h_v.
\end{eqnarray}
Then the above proposition says 
$$\Zcal_v^o(\frac{1}{2}; \phi_v,\varphi_v) = \Tcal_v(\phi_v,\varphi_v).$$

\subsection{Global case}\label{sec 4.2}


Put
$$ [\GO(V_{(1)})\times \GO(V_{(-\gamma)})]:= \{(h_1,h_2) \in \GO(V_{(1)})\times \GO(V_{(-\gamma)}) \mid \nu(h_1) = \nu(h_2)\},$$
which is viewed as a subgroup of $\GO(V_\Dcal)$.
Note that $\GO(V_{(a)}) \cong K^\times \rtimes \langle \tau_K \rangle$ for every $a \in k^\times$, and we have the following exact sequence 
$$1\longrightarrow k^\times \longrightarrow  (\Dcal^\times \times \Dcal^\times) \rtimes \langle \tau_\Dcal \rangle \longrightarrow \GO(V_\Dcal) \longrightarrow 1.$$ 
Here $k^\times$ embeds into $\Dcal^\times \times \Dcal^\times$ diagonally, and every pair $(b_1,b_2) \in \Dcal^\times \times \Dcal^\times$ is sent to $(x \mapsto b_1 x b_2^{-1},\ x \in V = \Dcal) \in \GO(V_\Dcal)$.
Let 
\begin{eqnarray}
[K^\times\times K^\times] &:=& \{(h_1,h_2) \in K^\times \times K^\times \mid \N_{K/k}(h_1) = \N_{K/k}(h_2)\} \nonumber \\
&=& K^\times \times K^\times \cap [\GO(V_{(1)})\times \GO(V_{(-\gamma)})]. \nonumber
\end{eqnarray}
Define
$\iota : [K^\times\times K^\times] \hookrightarrow \Dcal^\times \times \Dcal^\times$ by sending $(h_1,h_2)$ to $(h_1 h', h') \in (\Dcal^\times \times \Dcal^\times)/k^\times$, where $h' \in K^\times$ such that $h'/\overline{h'} = h_2/h_1$. Then the following diagram commutes:
$$
\xymatrix@C=2pc@R=2pc{
[K^\times \times K^\times]  \ar@{^(->}[r]^\iota \ar@{^(->}[d] & \Dcal^\times \times \Dcal^\times \ar@{->}[d] \\
[\GO(V_{(1)})\times \GO(V_{(-\gamma)})] \ar@{^(->}[r] &  \GO(V_\Dcal). 
}$$
${}$

Suppose $\varphi = \varphi_1\oplus \varphi_2 \in S(V_\Dcal(k_\AA))$, where $\varphi_1 \in S(V_{(1)}(k_\AA))$ and $\varphi_2 \in S(V_{(-\gamma)}(k_\AA))$.
In Section~\ref{sec 2.1} we put
$$\theta_{\chi}^K(g;\varphi_1) =  \int_{K^1\backslash K_\AA^1}\theta^{V_{(1)}}(g, r h_g; \varphi_1) \chi(r h_g) dr, \quad \forall g \in \GL_2^{{}^{\!\!+_{\!\tiny K}}} (k_\AA).$$
The Siegel-Weil formula in Theorem~\ref{thm 3.1} says
$$E(g,\frac{1}{2}; \varphi_2) 
= \frac{1}{L(1,\varsigma_K)} \cdot 
\int_{K^1\backslash K_\AA^1} \theta^{V_{(-\gamma)}} (g, rh_g; \varphi_2) dr.$$
Note that the following lemma is straightforward.

\begin{lem}\label{lem 4.2}
Given $g \in \GL_2^{{}^{\!\!+_{\!\tiny K}}}(k_\AA)$ and $h_1, h_2 \in K_\AA^\times$ with $\det(g) = \N_{K/k}(h_1) = \N_{K/k}(h_2)$, one has
$$
\theta^{V_{(1)}}(g,h_1;\varphi_1) \cdot \theta^{V_{(\gamma)}}(g,h_2;\varphi_2) 
= \theta^{V_\Dcal}\big(g, [h_1 h', h'];\varphi_1 \oplus \varphi_2\big).
$$
Here $h' \in K_\AA^\times$ is chosen so that $h'/\overline{h'} = h_2/h_1$, and $[h_1h', h'] \in (\Dcal_\AA^\times \times \Dcal_\AA^\times)/k_\AA^\times $ is considered as an element in $\GO(V_\Dcal)(k_\AA)$.
\end{lem}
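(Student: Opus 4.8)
The plan is to unwind both sides of the claimed identity into the theta-kernel definition and match term-by-term. Recall that for a quadratic space $(V,Q_V)$ one has $\theta^V(g,h;\varphi) = \sum_{x\in V(k)}\big(\omega^V(g,h)\varphi\big)(x)$, and that the extended Weil representation acts by $\big(\omega^V(g,h)\varphi\big)(x) = |\det(g)|_\AA^{-1/2}\big(\omega^V(\begin{pmatrix}1&0\\0&\det(g)^{-1}\end{pmatrix}g)\varphi\big)(h^{-1}x)$. So first I would reduce to the case $\det(g)=1$, i.e.\ $g\in\SL_2(k_\AA)$ (with $h_1,h_2\in K_\AA^1$ and $h'\in K_\AA^1$), by pulling the similitude factor $\begin{pmatrix}1&0\\0&\det(g)^{-1}\end{pmatrix}$ through each theta series; the factor $|\det(g)|_\AA^{-1/2}$ appears once on the right and as $|\det(g)|_\AA^{-1/2}\cdot|\det(g)|_\AA^{-1/2} = |\det(g)|_\AA^{-1}$ on the left from $\varphi_1$ and $\varphi_2$ separately, but since $Q_{(1)}$ and $Q_{(-\gamma)}$ each have dimension $2$ while $Q_{V_\Dcal}$ has dimension $4$, the normalizations are designed to match — this bookkeeping is routine and I would check it once.

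With $g=g^1\in\SL_2(k_\AA)$, the left side becomes $\Big(\sum_{x_1\in K}\big(\omega^{V_{(1)}}(g^1)\varphi_1\big)(h_1^{-1}x_1)\Big)\cdot\Big(\sum_{x_2\in K}\big(\omega^{V_{(-\gamma)}}(g^1)\varphi_2\big)(h_2^{-1}x_2)\Big)$ and the right side is $\sum_{x\in\Dcal}\big(\omega^{V_\Dcal}(g^1)(\varphi_1\oplus\varphi_2)\big)([h_1h',h']^{-1}x)$. The key structural input is the orthogonal decomposition $(V_\Dcal,Q_{V_\Dcal}) = (V_{(1)},Q_{(1)})\oplus(V_{(-\gamma)},Q_{(-\gamma)})$ coming from $\Dcal = K\oplus Kj$ with $j^2=\gamma$: under it, the Weil representation of $\SL_2(k_\AA)$ on $S(V_\Dcal(k_\AA))$ is the (completed) tensor product of the representations on $S(V_{(1)}(k_\AA))$ and $S(V_{(-\gamma)}(k_\AA))$, because the Weil representation only depends on the quadratic form and is multiplicative over orthogonal direct sums. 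Hence $\big(\omega^{V_\Dcal}(g^1)(\varphi_1\oplus\varphi_2)\big)(x_1+x_2j) = \big(\omega^{V_{(1)}}(g^1)\varphi_1\big)(x_1)\cdot\big(\omega^{V_{(-\gamma)}}(g^1)\varphi_2\big)(x_2)$, and summing over $x_1,x_2\in K$ splits the four-variable sum into a product of two two-variable sums — giving exactly the left side, provided the $\GO$-action matches.

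It remains to check that the action of $[h_1h',h']$ on $\Dcal = K\oplus Kj$ is block-diagonal with blocks $x_1\mapsto h_1^{-1}x_1$ on $K$ and $x_2\mapsto h_2^{-1}x_2$ on $Kj$ (so that the index shifts $h_i^{-1}x_i$ appear correctly). Writing the action of $[b_1,b_2]\in\GO(V_\Dcal)$ as $x\mapsto b_1xb_2^{-1}$, we compute $[h_1h',h']^{-1}$ acts by $x\mapsto (h_1h')^{-1}x h'$. For $x=x_1\in K$ this is $(h')^{-1}h_1^{-1}x_1 h'$; since $K$ is commutative and $h'\in K^\times$, this equals $h_1^{-1}x_1$. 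For $x=x_2 j$ with $x_2\in K$, using $jb=\bar b j$ we get $(h')^{-1}h_1^{-1}x_2 j h' = (h')^{-1}h_1^{-1}x_2\overline{h'}j = h_1^{-1}x_2\cdot\big(\overline{h'}/h'\big)j = h_1^{-1}x_2\cdot(h_1/h_2)j = h_2^{-1}x_2 j$, where I used the defining relation $h'/\overline{h'}=h_2/h_1$. So the index substitution is precisely $x_i\mapsto h_i^{-1}x_i$ on each block, and the two sides agree. I expect the main obstacle to be pinning down the precise normalization conventions for the Weil representation restricted to a summand versus the whole space — i.e.\ confirming that "$\omega^{V_\Dcal}$ restricted along the orthogonal decomposition is literally $\omega^{V_{(1)}}\otimes\omega^{V_{(-\gamma)}}$" with no extra scalar (this is standard but convention-sensitive, particularly because the local representations are taken with respect to $\overline\psi_v$); everything else is a direct unwinding.
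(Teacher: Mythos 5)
Your argument is correct and is exactly the verification the paper has in mind (the paper states Lemma~\ref{lem 4.2} without proof, calling it straightforward): decompose $\Dcal = K \oplus Kj$, use that $\omega^{V_\Dcal}$ acts on $\varphi_1\oplus\varphi_2$ as $\omega^{V_{(1)}}\otimes\omega^{V_{(-\gamma)}}$ over the orthogonal splitting, and check that $[h_1h',h']$ is block-diagonal, acting by $h_1$ on $K$ and by $h_2$ on $Kj$, which your computation using $jb=\bar{b}j$ and $h'/\overline{h'}=h_2/h_1$ establishes correctly. The one point you flagged is indeed only a convention issue on the paper's side: the similitude normalization must be read as $|\det(g)|_\AA^{-\dim V/4}$ (i.e.\ exponent $-1$ for the four-dimensional $V_\Dcal$ and $-\tfrac{1}{2}$ for each two-dimensional summand), under which the powers of $|\det(g)|_\AA$ on the two sides match exactly; note also that $\theta^{V_{(\gamma)}}$ in the statement should read $\theta^{V_{(-\gamma)}}$, consistent with $\Nr_{\Dcal/k}|_{Kj} = -\gamma\,\N_{K/k}$.
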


Applying the \lq\lq seesaw identity\rq\rq\ (cf.\ \cite{Kul}) with respect to the following diagram
\begin{eqnarray}\label{seesaw}
\xymatrix@C=2pc@R=2pc{
 \,\,\,\,\,\,  \quad \GL_2^{{}^{\!\!+_{\!\tiny K}}} \quad  \ar@{^{(}->}[d]_{\text{diagonal}}\ar@{-}[rrd] &  & \ [\GO(V_{(1)})\times\GO(V_{(-\gamma)})] \ar@{^{(}->}[d] \\
[\GL_2^{{}^{\!\!+_{\!\tiny K}}} \times \GL_2^{{}^{\!\!+_{\!\tiny K}}} ]  \ar@{-}[rru] & & \quad \,\,\,\,\,\,\,\, \GO(V_\Dcal)^{{}^{\!\!+_{\!\tiny K}}}, \quad
}
\end{eqnarray}
where $[\GL_2^{{}^{\!\!+_{\!\tiny K}}} \times \GL_2^{{}^{\!\!+_{\!\tiny K}}} ]$ (resp.\ $[\GO(V_{(1)})\times\GO(V_{(-\gamma)})]$) is the subgroup of $\GL_2^{{}^{\!\!+_{\!\tiny K}}} \times \GL_2^{{}^{\!\!+_{\!\tiny K}}}$ (resp.\ $\GO(V_{(1)})\times\GO(V_{(-\gamma)})$) consisting of all pairs $(g_1,g_2)$ where $g_1$ and $g_2$ have the same determinants (resp.\ the factor of similitudes),
we then obtain that:

\begin{prop}\label{prop 4.3}
Given $\phi \in \Pi$, and $\varphi \in S(V_\Dcal(k_\AA))$, we have
\begin{eqnarray}
\Zcal(\frac{1}{2}; \phi, \varphi)
&=& 
\frac{1}{L(1,\varsigma_K)} \cdot \int_{K^{\!^\times} k_\AA^\times \backslash K_\AA^\times} \int_{K^{\!^\times} k_\AA^\times \backslash K_\AA^\times} \theta^{\Dcal}( h_1,h_2;\phi, \varphi) \cdot \chi(h_1 h_2^{-1}) dh_1 dh_2. \nonumber
\end{eqnarray}
\end{prop}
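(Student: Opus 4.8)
\textbf{Proof plan for Proposition~\ref{prop 4.3}.}
The plan is to start from the definition of $\Zcal(1/2;\phi,\varphi)$ given in Section~\ref{sec 3.2}, write $\varphi = \varphi_1\oplus\varphi_2$ (it suffices to treat a single summand by linearity), and substitute the Siegel--Weil formula of Theorem~\ref{thm 3.1} for the factor $E(g,1/2;\varphi_2)$, so that
$$
\Zcal(\tfrac12;\phi,\varphi) = \frac{1}{L(1,\varsigma_K)}\int_{\Z(k_\AA)\GL_2^{{}^{\!\!+_{\!\tiny K}}}(k)\backslash \GL_2^{{}^{\!\!+_{\!\tiny K}}}(k_\AA)} \phi(g)\,\theta_\chi^K(g;\varphi_1)\left(\int_{K^1\backslash K_\AA^1}\theta^{V_{(-\gamma)}}(g,rh_g;\varphi_2)\,dr\right)dg.
$$
First I would unfold $\theta_\chi^K(g;\varphi_1)$ as its own integral $\int_{K^1\backslash K_\AA^1}\theta^{V_{(1)}}(g,r'h_g;\varphi_1)\chi(r'h_g)\,dr'$ and combine the two quadratic theta series into a single one on $V_\Dcal$ using Lemma~\ref{lem 4.2}: the product $\theta^{V_{(1)}}(g,r'h_g;\varphi_1)\cdot\theta^{V_{(-\gamma)}}(g,rh_g;\varphi_2)$ becomes $\theta^{V_\Dcal}(g,[\,r'h_g h',\,rh'\,];\varphi_1\oplus\varphi_2)$ for an appropriate $h'\in K_\AA^\times$ with $h'/\overline{h'}=(rh_g)/(r'h_g)=r/r'$.

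The heart of the argument is the seesaw identity attached to the diagram~(\ref{seesaw}): integrating $\phi(g)\theta^{V_\Dcal}(g,\cdot;\varphi)$ over $g$ in the diagonal copy of $\GL_2^{{}^{\!\!+_{\!\tiny K}}}$ against a theta kernel for $[\GO(V_{(1)})\times\GO(V_{(-\gamma)})]$ equals integrating the theta lift of $\phi\otimes(\text{the Siegel--Weil }\mathbf 1\text{-theta})$ from $[\GL_2^{{}^{\!\!+_{\!\tiny K}}}\times\GL_2^{{}^{\!\!+_{\!\tiny K}}}]$ against $[\GO(V_{(1)})\times\GO(V_{(-\gamma)})]$. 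Concretely, after swapping the order of integration so that the $g$-integral is innermost, the inner integral $\int \phi(g)\,\theta^{V_\Dcal}(g,[h_1,h_2];\varphi)\,dg$ over $\Z\GL_2^{{}^{\!\!+_{\!\tiny K}}}(k)\backslash\GL_2^{{}^{\!\!+_{\!\tiny K}}}(k_\AA)$ is exactly the quaternionic theta series $\theta^\Dcal(h_1,h_2;\phi,\varphi)$ of Section~\ref{sec 2.3} (unwinding the $\SL_2(k)\backslash\SL_2(k_\AA)$ integral against the $\Z\backslash\cdot$ and $\T_1^{{}^{+_{\!\tiny K}}}$ pieces via Iwasawa, exactly as in the proof of Proposition~\ref{prop 3.2} but in reverse). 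This identifies the remaining integration variables: the pair $(r'h_gh',\,rh')$ ranges, modulo $k_\AA^\times$ and modulo $K^\times$ (the latter from the $K^1\backslash K_\AA^1$ quotients together with the automorphy of $\theta^\Dcal$ under $\Dcal^\times\times\Dcal^\times$), over two independent copies of $K^\times k_\AA^\times\backslash K_\AA^\times$, and under the parametrization $h_1 = r'h_gh'$, $h_2 = rh'$ one has $h_1 h_2^{-1} = (r'/r)\cdot(h_g)$, so $\chi(r'h_g) = \chi(h_1)\chi(h_2)^{-1}\cdot\chi(\text{unit in }k_\AA^\times)$, and since $\eta\cdot\chi|_{k_\AA^\times}=1$ the central-character bookkeeping collapses the $\chi(r'h_g)$ weight to $\chi(h_1h_2^{-1})$. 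Assembling these gives precisely
$$
\Zcal(\tfrac12;\phi,\varphi) = \frac{1}{L(1,\varsigma_K)}\int_{K^{\!^\times}k_\AA^\times\backslash K_\AA^\times}\int_{K^{\!^\times}k_\AA^\times\backslash K_\AA^\times}\theta^\Dcal(h_1,h_2;\phi,\varphi)\,\chi(h_1h_2^{-1})\,dh_1\,dh_2.
$$

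The main obstacle I anticipate is bookkeeping of measures and quotients rather than anything conceptual: one must check that the Tamagawa measure $dg$ on $\GL_2^{{}^{\!\!+_{\!\tiny K}}}(k_\AA)$, factored via Iwasawa into the $\SL_2$-measure (appearing in the definition of $\theta^\Dcal$) times the torus measure, matches the product of Tamagawa measures $dh_1\,dh_2$ on the two copies of $K^\times k_\AA^\times\backslash K_\AA^\times$ after the change of variables $(r',r)\mapsto(h_1,h_2)$, and that the factor $L(1,\varsigma_K)$ is correctly tracked through the volume $\mathrm{vol}(K^\times\backslash K_\AA^\times/k_\AA^\times)=2L(1,\varsigma_K)$ and the Siegel--Weil normalization. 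A secondary point requiring care is the legitimacy of interchanging the order of integration — the cuspidality of $\Pi$ (hence rapid decay of $\phi$) together with the absolute convergence of the zeta integral at $s=1/2$ (Corollary~\ref{cor 3.3}(3)) should justify Fubini, but this needs to be invoked explicitly. The case where $\varphi$ is a genuine sum $\sum_i\varphi_{1,i}\oplus\varphi_{2,i}$ rather than a single elementary tensor follows by linearity of every step.
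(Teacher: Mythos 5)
Your plan is essentially the paper's own proof: substitute the Siegel--Weil formula for $E(g,\tfrac12;\varphi_2)$, combine the two quadratic theta kernels via Lemma~\ref{lem 4.2}, and unfold/seesaw so that the inner $\SL_2$-integral of $\phi$ against the quaternionic theta kernel is recognized as $\theta^{\Dcal}(h_1,h_2;\phi,\varphi)$, with the remaining variables parametrizing two copies of $K^{\times}k_\AA^\times\backslash K_\AA^\times$; the measure and Fubini caveats you flag are exactly the bookkeeping left implicit in the paper's chain of equalities. One small correction: Lemma~\ref{lem 4.2} yields the similitude $[\,r'h_g h',\,h'\,]$ (not $[\,r'h_g h',\,rh'\,]$), so the natural parametrization is $h_1=r'h_gh'$, $h_2=h'$, which gives $\chi(r'h_g)=\chi(h_1h_2^{-1})$ on the nose --- no appeal to $\eta\cdot\chi\big|_{k_\AA^\times}=1$ is needed at that step.
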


\begin{proof}
The above discussion says that
\begin{eqnarray}
\Zcal(\frac{1}{2};\phi,\varphi) &=&  \frac{1}{L(1,\varsigma_K)} \cdot \int_{\Z(k_\AA) \GL_2^{{}^{\!\!+_{\!\tiny K}}}(k) \backslash \GL_2^{{}^{\!\!+_{\!\tiny K}}}(k_\AA)} \phi(g)  \nonumber \\
& &\cdot 
\left(\int_{K^1\backslash K_\AA^1}\int_{K^1\backslash K_\AA^1}
\theta^{V_{(1)}}(g,r_1 h_g;\varphi_1) \theta^{V_{(-\gamma)}}(g,r_2 h_g; \varphi_2) \chi(r_1 h_g) d r_1 dr_2\right) dg \nonumber \\
&=& \frac{1}{L(1,\varsigma_K)} \cdot
\int_{\SL_2(k)\backslash \SL_2(k_\AA)} \phi\big(g^1\alpha(\N_{K/k}(h))\big)  \nonumber \\
& & \cdot \left(\int_{K^{\!^\times} k_\AA^\times \backslash K_\AA^\times} \int_{K^{\!^\times} k_\AA^\times \backslash K_\AA^\times}
\theta^\Dcal(g^1\alpha(\N_{K/k}(h)), [h h',h']; \varphi) \chi(h) d h dh'\right) dg^1 \nonumber \\
&=& 
\frac{1}{L(1,\varsigma_K)} \cdot \int_{K^{\!^\times} k_\AA^\times \backslash K_\AA^\times} \int_{K^{\!^\times} k_\AA^\times \backslash K_\AA^\times} \theta^{\Dcal}( h_1,h_2; \phi,\varphi) \cdot \chi(h_1 h_2^{-1}) dh_1 dh_2. \nonumber
\end{eqnarray}
\end{proof}

For each pair $(\phi, \varphi)$ with $\phi \in \Pi$ and $\varphi \in S(V_\Dcal(k_\AA))$, define the global toric period integral by:
\begin{eqnarray}\label{eqn 4.2}
\Tcal(\phi,\varphi) &:=& \int_{K^{\!^\times} k_\AA^\times \backslash K_\AA^\times} \int_{K^{\!^\times} k_\AA^\times \backslash K_\AA^\times} \theta^{\Dcal}( h_1,h_2; \phi, \varphi) \cdot \chi(h_1 h_2^{-1}) dh_1 dh_2. 
\end{eqnarray}
Then by Corollary~\ref{cor 3.3}, Proposition~\ref{prop 4.1} and \ref{prop 4.3}, we arrive at:

\begin{cor}\label{cor 4.4}
Given pure tensors $\phi = \otimes_v \phi_v \in \Pi$ and $\varphi = \otimes_v \varphi_v \in S(V_\Dcal(k_\AA))$, we have
$$\Tcal(\phi,\varphi) = L(\frac{1}{2},\Pi \times \chi)\cdot \prod_v \Tcal_v(\phi_v, \varphi_v).$$
\end{cor}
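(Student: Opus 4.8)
The plan is to combine the local and global identities already established in Sections~\ref{sec 3} and~\ref{sec 4} and then feed them through the factorization of the zeta integral in Corollary~\ref{cor 3.3}~(2). Concretely, I would start from the special value $s=1/2$ of the identity
\begin{eqnarray}
\Zcal(\tfrac{1}{2}; \phi,\varphi) &=& \frac{L(1/2,\Pi\times\chi)}{L(1,\varsigma_K)} \cdot \prod_v \Zcal_v^o(\tfrac{1}{2}; \phi_v,\varphi_v), \nonumber
\end{eqnarray}
which is exactly Corollary~\ref{cor 3.3}~(2) evaluated at the central point (convergence there is guaranteed by Corollary~\ref{cor 3.3}~(3)). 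The left-hand side is rewritten by Proposition~\ref{prop 4.3} as $L(1,\varsigma_K)^{-1}\cdot \Tcal(\phi,\varphi)$, where $\Tcal(\phi,\varphi)$ is the global toric period integral defined in~(\ref{eqn 4.2}). On the right-hand side, each local factor $\Zcal_v^o(1/2;\phi_v,\varphi_v)$ is precisely $\Tcal_v(\phi_v,\varphi_v)$ by Proposition~\ref{prop 4.1} together with the definition~(\ref{eqn 4.1}) of $\Tcal_v$.

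Once these substitutions are made, the factor $L(1,\varsigma_K)^{-1}$ cancels on both sides, and what remains is
\begin{eqnarray}
\Tcal(\phi,\varphi) &=& L(\tfrac{1}{2},\Pi\times\chi)\cdot \prod_v \Tcal_v(\phi_v,\varphi_v), \nonumber
\end{eqnarray}
which is the desired statement. So in essence the proof is a bookkeeping exercise: I would carefully match up the normalizing $L$- and $\zeta$-factors appearing in~(\ref{eqn 4.1}) (namely the ratio $L_v(1,\varsigma_K)L_v(1,\Pi,\mathrm{Ad})/(L_v(1/2,\Pi\times\chi)\zeta_v(2))$ times the local integral in Proposition~\ref{prop 4.1}) against the definition $\Zcal_v^o(s;\phi_v,\varphi_v)=\frac{L_v(2s,\varsigma_K)}{L_v(s,\Pi\times\chi)}\Zcal_v(s;\phi_v,\varphi_v)$ at $s=1/2$, using Proposition~\ref{prop 4.1}'s formula
\begin{eqnarray}
\Zcal_v(\tfrac{1}{2};\phi_v,\varphi_v) &=& \frac{L_v(1,\Pi,\mathrm{Ad})}{\zeta_v(2)}\cdot \int_{K_v^\times/k_v^\times}\theta_v^{\Dcal,o}(h_v,1;\phi_v,\varphi_v)\chi_v(h_v)\,d^\times h_v \nonumber
\end{eqnarray}
to see that indeed $\Zcal_v^o(1/2;\phi_v,\varphi_v)=\Tcal_v(\phi_v,\varphi_v)$.

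The only genuine subtlety — and the step I would be most careful about — is the interchange of the infinite product with the evaluation at $s=1/2$: one needs that $\Zcal_v^o(1/2;\phi_v,\varphi_v)=1$ for all but finitely many $v$ (this is asserted in Corollary~\ref{cor 3.3}~(2), hence the product is genuinely finite), and that the meromorphic identity of Corollary~\ref{cor 3.3}~(2) is in fact holomorphic and valid at $s=1/2$ rather than merely in a half-plane of absolute convergence. The latter follows because $\Zcal(s;\phi,\varphi)$ is built from the Eisenstein series $E(g,s;\varphi_2)$, which is holomorphic at $s=1/2$ as recalled before Theorem~\ref{thm 3.1}, and because $L(2s,\varsigma_K)$ is finite and nonzero at $s=1/2$; so specialization is legitimate. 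With those points noted, the equalities chain together directly and the corollary follows.
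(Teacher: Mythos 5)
Your proposal is correct and is exactly the argument the paper intends: the paper deduces Corollary~\ref{cor 4.4} by combining Corollary~\ref{cor 3.3}~(2) at $s=1/2$ with Proposition~\ref{prop 4.3} for the global side and Proposition~\ref{prop 4.1} together with the definition~(\ref{eqn 4.1}) for the local factors, just as you do. Your extra remarks on the holomorphy of the Eisenstein series at $s=1/2$ and the finiteness of the product are sensible but amount to the same bookkeeping the paper leaves implicit.
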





\section{Waldspurger formula}\label{sec 5}

Let $\Pi$ be an automorphic cuspidal representation of $\GL_2(k_\AA)$ with a unitary central character $\eta$. For a quaternion algebra $\Dcal$ over $k$,
let $\Pi^{\Dcal}$ be, if exists, the automorphic representation of $\Dcal_\AA^\times$ corresponding to $\Pi$ via the Jacquet-Langlands correspondence.
Let $K$ be a separable quadratic algebra over $k$ together with an embedding $\iota: K \hookrightarrow \Dcal$.
Given a unitary Hecke character $\chi: K^\times \backslash K_\AA^\times \rightarrow \CC^\times$, suppose $ \eta\cdot \chi\big|_{k_\AA^\times} = 1$.
For each $f \in \Pi^\Dcal$, put
$$P_\chi^\Dcal(f):= \int_{K^{\!^\times} k_\AA^\times \backslash K_\AA^\times} f\big(\iota(h)\big) \chi(h) d^\times h.$$
This induces a linear functional $\Pcal_\chi^\Dcal : \Pi^\Dcal \otimes \widetilde{\Pi}^\Dcal \rightarrow \CC$ defined by
$$\Pcal_\chi^\Dcal(f\otimes \tilde{f}) := P_\chi^\Dcal(f) \cdot P_{\chi^{-1}}^\Dcal(\tilde{f}), \quad \forall f\otimes\tilde{f} \in \Pi^\Dcal \otimes \widetilde{\Pi}^\Dcal.$$

On the other hand, write $\Pi^\Dcal = \otimes_v \Pi_v^\Dcal$ and $\widetilde{\Pi}^\Dcal = \otimes_v \widetilde{\Pi}_v^\Dcal$. For each place $v$ of $k$, let $\langle \cdot,\cdot \rangle_v : \Pi_v^\Dcal \times \widetilde{\Pi}_v^\Dcal \rightarrow \CC$ be the natural duality pairing. We assume that the identification between $\Pi^\Dcal$ (resp.\ $\widetilde{\Pi}^\Dcal$)
and $\otimes_v \Pi_v^\Dcal$ (resp.\ $\otimes_v \widetilde{\Pi}_v^\Dcal$) satisfies:
\begin{eqnarray} \label{eqn 5.1}
\langle \cdot , \cdot \rangle_{\text{Pet}} = \frac{2 L(1,\Pi,\text{Ad})}{\zeta_k(2)} \cdot \prod_v \langle \cdot,\cdot \rangle_v,
\end{eqnarray}
where $\langle \cdot , \cdot \rangle_{\text{Pet}} : \Pi^\Dcal \times \widetilde{\Pi}^\Dcal \rightarrow \CC$ is the pairing induced from the Petersson inner product on $\Pi^\Dcal$.
The local toric period integral $\Pscr_{\chi,v}^\Dcal  :\Pi_v^\Dcal \otimes \widetilde{\Pi}_v^\Dcal \rightarrow \CC$ is defined by:
$$\Pscr_{\chi,v}^\Dcal(f_v\otimes \tilde{f}_{v} ):= \frac{L_v(1,\varsigma_{K}) L_v(1,\Pi, \text{Ad})}{L_v(1/2,\Pi \times \chi) \zeta_v(2)} \cdot 
\int_{K_v^\times/ k_v^\times}  \langle \Pi^\Dcal_v\big(\iota(h_v)\big) f_{v}, \tilde{f}_v \rangle_{v} \cdot \chi_v(h_v) d^\times h_v.$$

\begin{lem}\label{lem 5.1}
Suppose $v$ is \lq\lq good,\rq\rq\ i.e.\ 
the additive character $\psi_v$ has trivial conductor, the quaternion algebra $\Dcal$ splits at $v$, the local representation $\Pi_v^\Dcal = \Pi_v$ is an unramified principal series, the place $v$ is unramified in $K$, the character $\chi_v$ is unramified.
Take $f_v \in \Pi_v^\Dcal$ and $\tilde{f}_v \in \widetilde{\Pi}_v^\Dcal$ to be spherical and invariant by $\iota(O_{K_v})$ with $\langle f_v, \tilde{f}_v\rangle_v = 1$. 
Then $$\Pscr_{\chi,v}^\Dcal(f_v\otimes \tilde{f}_{v} ) = 1.$$
\end{lem}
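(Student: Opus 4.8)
The plan is to compute the local toric period integral $\Pscr_{\chi,v}^\Dcal(f_v \otimes \tilde f_v)$ directly at a good place $v$ by using the explicit unramified computation already carried out for the local zeta integral. Since $v$ is good, $\Dcal$ splits at $v$, so $\Pi_v^\Dcal = \Pi_v$ is an unramified principal series of $\GL_2(k_v)$, and $\widetilde\Pi_v^\Dcal = \widetilde\Pi_v$. The matrix coefficient $\langle \Pi_v^\Dcal(\iota(h_v)) f_v, \tilde f_v\rangle_v$, for spherical $f_v,\tilde f_v$ normalized by $\langle f_v,\tilde f_v\rangle_v = 1$, is then (by the identification $\textbf{Sh}_v$ of Section~\ref{sec 2.3.1}) nothing but $\theta_v^{\Dcal,o}(h_v,1;\phi_v,\varphi_v)$ for the good choices $\phi_v$ spherical with $W_{\phi_v}(1)=1$ and $\varphi_v = \mathbf{1}_{O_{K_v}} \oplus \mathbf{1}_{O_{K_v}}$, since Theorem~\ref{thm A.2}(1) tells us $\theta_v^{\Dcal,o} = 1$ at a good place, and a spherical matrix coefficient with the right normalization is uniquely determined. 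Thus I would first invoke Proposition~\ref{prop 4.1} (or rather its restatement via $\Tcal_v$) to rewrite $\Pscr_{\chi,v}^\Dcal(f_v\otimes\tilde f_v)$ as exactly $\Tcal_v(\phi_v,\varphi_v)$ for these good test data.

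The second step is to identify $\Tcal_v(\phi_v,\varphi_v)$ with $\Zcal_v^o(\tfrac12;\phi_v,\varphi_v)$, which is precisely the content of the displayed equality at the end of Section~\ref{sec 4.1}. Then the third step is to evaluate $\Zcal_v^o(\tfrac12;\phi_v,\varphi_v)$ using Corollary~\ref{cor 3.3}(1): for good $v$ and the good Schwartz function $\varphi_v = \mathbf{1}_{O_{K_v}}\oplus\mathbf{1}_{O_{K_v}}$ one has
$$\Zcal_v(s;\phi_v,\varphi_v) = \frac{L_v(s,\Pi\times\chi)}{L_v(2s,\varsigma_K)},$$
hence by the definition $\Zcal_v^o(s;\phi_v,\varphi_v) = \tfrac{L_v(2s,\varsigma_K)}{L_v(s,\Pi\times\chi)}\Zcal_v(s;\phi_v,\varphi_v) = 1$ identically in $s$, in particular at $s=1/2$. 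Chaining these together gives $\Pscr_{\chi,v}^\Dcal(f_v\otimes\tilde f_v) = \Tcal_v(\phi_v,\varphi_v) = \Zcal_v^o(\tfrac12;\phi_v,\varphi_v) = 1$.

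The one point requiring care — and the main obstacle — is the compatibility of normalizations: I must check that the spherical vectors $f_v \in \Pi_v^\Dcal$, $\tilde f_v \in \widetilde\Pi_v^\Dcal$ with $\langle f_v,\tilde f_v\rangle_v = 1$ appearing in the statement correspond, under $\textbf{Sh}_v$, to the good test pair $(\phi_v,\varphi_v)$ with $W_{\phi_v}(1) = 1$ and $\varphi_v = \mathbf{1}_{O_{K_v}}\oplus\mathbf{1}_{O_{K_v}}$, and in particular that the $\iota(O_{K_v})$-invariance of $f_v$ matches the $O_{K_v}$-invariance built into the Schwartz function. This is where Proposition~\ref{prop A.L} and the good-place normalization $\theta_v^{\Dcal,o}(\cdot,\cdot;\phi_v,\varphi_v) = 1$ of Theorem~\ref{thm A.2}(1) are used: the matrix coefficient $b_v \mapsto \langle \Pi_v^\Dcal(b_v)f_v,\tilde f_v\rangle_v$ and the function $b_v \mapsto \theta_v^{\Dcal,o}(b_v,1;\phi_v,\varphi_v)$ are both spherical bi-$\Kcal_v$-invariant matrix coefficients of $\Pi_v^\Dcal\otimes\widetilde\Pi_v^\Dcal$ taking the value $1$ at $b_v = 1$, hence coincide; evaluating the integral of this common function against the unramified character $\chi_v$ over $K_v^\times/k_v^\times$ with the Tamagawa measure then reduces, via the volume computation in Section~\ref{sec 1.2}, to the unramified local zeta integral. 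Once this identification is in place the rest is the bookkeeping of $L$-factors above, which is routine.
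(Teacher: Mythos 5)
Your route is genuinely different from the paper's. The paper proves Lemma~\ref{lem 5.1} by a direct local computation, split according to the behaviour of $v$ in $K$: when $v$ is inert, $K_v^\times/k_v^\times$ is compact of volume one for the chosen measure, so the integral collapses and the statement reduces to the elementary identity $L_v(1,\varsigma_K)L_v(1,\Pi,\mathrm{Ad}) = L_v(\tfrac12,\Pi\times\chi)\zeta_v(2)$ in the unramified inert situation; when $v$ splits, the pairing $\langle\cdot,\cdot\rangle_v$ is realized on Whittaker functions and $\Pscr_{\chi,v}^\Dcal(f_v\otimes\tilde f_v)$ factors as a product of two normalized unramified Hecke--Jacquet zeta integrals, each equal to $1$. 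You instead recycle the unramified Rankin--Selberg computation of Corollary~\ref{cor 3.3}(1), pulling it back through the chain $\Pscr_{\chi,v}^\Dcal(f_v\otimes\tilde f_v)=\Tcal_v(\phi_v,\varphi_v)=\Zcal_v^o(\tfrac12;\phi_v,\varphi_v)=1$, with the first equality supplied by the local Shimizu identification. That is not circular (Proposition~\ref{prop 4.1}, Corollary~\ref{cor 3.3} and Appendix~\ref{sec A} do not use Lemma~\ref{lem 5.1}), and it buys you a single uniform treatment of inert and split $v$ at the price of the matrix-coefficient identification; the paper's proof buys independence from the Sections~\ref{sec 3}--\ref{sec 4} machinery at the price of the two-case computation.

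Two points need repair. First, the identification $\langle\Pi_v^\Dcal(\iota(h_v))f_v,\tilde f_v\rangle_v=\theta_v^{\Dcal,o}(h_v,1;\phi_v,\varphi_v)$ is not yet justified by what you cite. The lemma in Appendix~\ref{sec A} only says that $\theta_v^{\Dcal,o}(b_v,b_v';\phi_v,\varphi_v)=1$ for $b_v,b_v'\in\GL_2(O_v)$, i.e.\ it pins down the values on $\Kcal_v\times\Kcal_v$; an element of the span of matrix coefficients that equals $1$ on the maximal compact need not be the normalized spherical coefficient (it may differ by a coefficient built from vectors with zero spherical component, which vanishes on $\Kcal_v\times\Kcal_v$ but not off it, and such a discrepancy would survive the non-compact integration over $K_v^\times/k_v^\times$ when $v$ splits). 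What you actually need, and assert without proof, is that $\theta_v^{\Dcal,o}(\cdot,\cdot;\phi_v,\varphi_v)$ is right $\Kcal_v\times\Kcal_v$-invariant for the good data $\phi_v$ spherical, $\varphi_v=\mathbf{1}_{\Mat_2(O_v)}$ (equivalently $\mathbf{1}_{O_{K_v}}\oplus\mathbf{1}_{O_{K_v}}$ at a good place); this is true and follows from a routine check with the unramified Weil representation (trivial conductor of $\psi_v$, invariance of $\mathbf{1}_{\Mat_2(O_v)}$ under $\omega_v^\Dcal(\SL_2(O_v))$ and under $[\kappa,\kappa']$, right $\GL_2(O_v)$-invariance of $W_{\phi_v}$, and bookkeeping of the $\alpha(\cdot)$-twist), but it must be carried out; only then does one-dimensionality of spherical matrix coefficients plus the normalization at the identity give the identification, and one should also say a word matching $\iota(O_{K_v})$-invariance of $f_v$ with the chosen $\Kcal_v$. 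Second, a scope caveat: Lemma~\ref{lem 5.1} is stated for $K$ a separable quadratic algebra, while Sections~\ref{sec 3}--\ref{sec 4} (hence Proposition~\ref{prop 4.1} and Corollary~\ref{cor 3.3}) are developed under the hypothesis that $K$ is a quadratic field. Your argument therefore covers $K$ a field (with $v$ inert or split, both handled by Corollary~\ref{cor 3.3}(1)), but for $K=k\times k$ you must either observe that the relevant local statements and their proofs go through verbatim for $K_v=k_v\times k_v$, or fall back on the factorization of $\Pscr_{\chi,v}^\Dcal$ into two unramified zeta integrals as in Appendix~\ref{sec C} (which is essentially the paper's split-case computation). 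With these two points supplied, your proof is correct.
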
 
\begin{proof}
Suppose $v$ is inert in $K$. Then the choices of $f_v$ and $\tilde{f}_v$ satisfy
$$\Pscr_{\chi,v}^\Dcal(f_v\otimes \tilde{f}_{v} ) = \frac{L_v(1,\varsigma_{K}) L_v(1,\Pi, \text{Ad})}{L_v(1/2,\Pi \times \chi) \zeta_v(2)}.$$
It is straightforward that the right hand side of the above equality equals to $1$ under the above assumptions on $v$.\\ 

Suppose $v$ splits in $K$, i.e.\ $K_v = k_v \times k_v$.
Write $\chi_v = \chi_{v,1} \times \chi_{v,2}$ on $k_v^\times \times k_v^\times$.
Then
$$L_v(\frac{1}{2} ,\Pi \times \chi) = L_v(\frac{1}{2}, \Pi_v \otimes \chi_{v,1}) \cdot L_v(\frac{1}{2},\Pi_v \otimes \chi_{v,2}) = 
L_v(\frac{1}{2}, \Pi_v \otimes \chi_{v,1}) \cdot L_v(\frac{1}{2},\widetilde{\Pi}_v \otimes \chi_{v,1}^{-1}).$$
The last equality follows from the assumption $\eta_v \cdot \chi_v \big|_{k_v^\times} = 1$, where $\eta_v$ is the central character of $\Pi_v$.
The pairing $\langle \cdot , \cdot \rangle_v$ can be realized by
$$\langle f_v,\tilde{f}_v \rangle_v:= \frac{\zeta_v(2)}{\zeta_v(1) L_v(1,\Pi,\text{Ad})} \cdot \int_{k_v}W_{f_v}\begin{pmatrix} a_v & 0 \\ 0 & 1\end{pmatrix} W_{\tilde{f}_v}'\begin{pmatrix} a_v & 0 \\ 0 & 1\end{pmatrix} d^\times a_v, \forall f_v \in \Pi_v^\Dcal,\ \tilde{f}_v \in \Pi_v^\Dcal,$$
where $W_{f_v}$ (resp.\ $W_{\tilde{f}_v}'$) is the Whittaker function of $f_v$ (resp.\ $\tilde{f}_v$) with respect to $\psi_v$ (resp.\ $\overline{\psi}_v$).
We may assume the embedding $\iota: K_v \rightarrow \Dcal_v$ satisfies
$$\iota(a_v,a_v') = \begin{pmatrix} a_v&0\\ 0 & a_v'\end{pmatrix} \in \Mat_2(k_v) = \Dcal_v, \quad \forall (a_v,a_v') \in k_v\times k_v.$$
Then
\begin{eqnarray}
\Pscr_{\chi,v}^\Dcal(f_v\otimes \tilde{f}_{v} )
&=& \left(\frac{1}{L_v(1/2,\Pi_v\otimes \chi_{v,1})} \int_{k_v^\times} W_{f_v}\begin{pmatrix} a_v & 0 \\ 0 & 1\end{pmatrix} \chi_{v,1}(a_v)d^\times a_v\right) \nonumber \\
&& \cdot 
\left(\frac{1}{L_v(1/2,\widetilde{\Pi}_v\otimes \chi_{v,1}^{-1})} \int_{k_v^\times} W_{\tilde{f}_v}'\begin{pmatrix} a_v & 0 \\ 0 & 1\end{pmatrix} \chi_{v,1}^{-1}(a_v)d^\times a_v\right). \nonumber
\end{eqnarray}
Therefore when $f_v$ and $\tilde{f}_v$ are spherical and invariant by $\iota(O_{K_v})$ with $\langle f_v, \tilde{f}_v\rangle_v = 1$, we get $\Pscr_{\chi,v}^\Dcal(f_v\otimes \tilde{f}_{v} ) = 1$.
\end{proof}

Set $\Pscr_\chi^\Dcal := \otimes_v \Pscr_{\chi,v}^\Dcal : \Pi^\Dcal \otimes \widetilde{\Pi}^\Dcal \rightarrow \CC$.
We finally arrive at:

\begin{thm}\label{thm 5.1}
The linear functionals $\Pcal_\chi^\Dcal$ and $\Pscr_\chi^\Dcal$ on $\Pi^\Dcal \otimes \widetilde{\Pi}^\Dcal$ satisfy
$$ \Pcal_\chi^\Dcal = L(\frac{1}{2},\Pi \times \chi) \cdot \Pscr_\chi^\Dcal.$$
\end{thm}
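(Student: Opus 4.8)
\textbf{Proof plan for Theorem~\ref{thm 5.1}.}
The strategy is to reduce the statement about the abstract functionals $\Pcal_\chi^\Dcal$ and $\Pscr_\chi^\Dcal$ to the already-established identity for zeta integrals in Corollary~\ref{cor 4.4}, using the global and local Shimizu correspondences of Section~\ref{sec 2.3} to translate between theta lifts and automorphic forms on $\Dcal_\AA^\times$. First I would dispose of the split case $K = k \times k$ by citing Appendix~\ref{sec C}, so that for the rest of the proof $K$ is a quadratic field and the machinery of Sections~\ref{sec 3}--\ref{sec 4} applies. Since both sides of the asserted identity are elements of the one-dimensional space $\Hom_{K_\AA^\times}(\Pi^\Dcal,\chi^{-1}) \otimes \Hom_{K_\AA^\times}(\widetilde{\Pi}^\Dcal,\chi)$ (by the local multiplicity-one results of Tunnell--Waldspurger, which also guarantee the relevant local Hom-spaces are at most one-dimensional), it suffices to check the identity on a single pure tensor $f \otimes \tilde{f} = \otimes_v (f_v \otimes \tilde{f}_v)$ for which neither side vanishes identically; and it suffices to work factor-by-factor once everything is expressed as an Euler product.

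Concretely, the plan is: given a pure tensor $f \otimes \tilde f \in \Pi^\Dcal \otimes \widetilde\Pi^\Dcal$, choose $\phi = \otimes_v \phi_v \in \Pi$ and $\varphi = \otimes_v \varphi_v \in S(V_\Dcal(k_\AA))$ so that $\mathbf{Sh}(\theta^\Dcal(\cdot,\cdot;\phi,\varphi)) = f \otimes \tilde f$ (possible by Theorem~\ref{thm 2.2}) and so that the local decomposition $\mathbf{Sh}_v(\theta_v^{\Dcal,o}(\cdot,\cdot;\phi_v,\varphi_v)) = f_v \otimes \tilde f_v$ holds at every $v$, which is legitimate because the normalization~(\ref{eqn 5.1}) of the identification $\Pi^\Dcal \cong \otimes_v \Pi_v^\Dcal$ is exactly the one produced by~(\ref{eqn 2.2}) and Remark~\ref{rem 2.3}. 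Under $\mathbf{Sh}$, the global toric period integral $\Tcal(\phi,\varphi)$ of~(\ref{eqn 4.2}) becomes
$$
\Tcal(\phi,\varphi) = \int_{K^\times k_\AA^\times \backslash K_\AA^\times} \int_{K^\times k_\AA^\times \backslash K_\AA^\times} f(\iota(h_1)) \overline{\tilde f(\iota(h_2))}\,\chi(h_1 h_2^{-1})\, dh_1\, dh_2 = P_\chi^\Dcal(f) \cdot P_{\chi^{-1}}^\Dcal(\tilde f) = \Pcal_\chi^\Dcal(f\otimes\tilde f),
$$
where I have used that $\theta^\Dcal(h_1,h_2;\phi,\varphi)$ corresponds, via the first arrow of the seesaw~(\ref{seesaw}), to the value of $f_1 \otimes \bar f_2$ at $(\iota(h_1),\iota(h_2))$ and that the embedding $\iota$ here agrees with the one in the seesaw diagram of Section~\ref{sec 4.2}; one must also check that the Petersson-pairing identification of $\widetilde\Pi^\Dcal$ with $\{\bar f\}$ is compatible with $P_{\chi^{-1}}^\Dcal$. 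Similarly, under $\mathbf{Sh}_v$ the local toric period integral $\Tcal_v(\phi_v,\varphi_v)$ of~(\ref{eqn 4.1}) becomes exactly $\Pscr_{\chi,v}^\Dcal(f_v \otimes \tilde f_v)$, by comparing the defining formula of $\Pscr_{\chi,v}^\Dcal$ with~(\ref{eqn 4.1}) and the fact that $\theta_v^{\Dcal,o}(h_v,1;\phi_v,\varphi_v) = \langle \Pi_v^\Dcal(\iota(h_v)) f_v, \tilde f_v\rangle_v$ as matrix coefficients. Substituting both translations into Corollary~\ref{cor 4.4} yields
$$
\Pcal_\chi^\Dcal(f\otimes\tilde f) = \Tcal(\phi,\varphi) = L(\tfrac12,\Pi\times\chi) \cdot \prod_v \Tcal_v(\phi_v,\varphi_v) = L(\tfrac12,\Pi\times\chi) \cdot \prod_v \Pscr_{\chi,v}^\Dcal(f_v\otimes\tilde f_v) = L(\tfrac12,\Pi\times\chi)\cdot \Pscr_\chi^\Dcal(f\otimes\tilde f),
$$
and since this holds for a generating family of pure tensors and both sides are linear, the identity of functionals follows.

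The main obstacle I anticipate is not any single estimate but the bookkeeping of compatibilities between the several identifications in play: the global Shimizu isomorphism $\mathbf{Sh}$, its local counterparts $\mathbf{Sh}_v$, the normalization~(\ref{eqn 5.1}), and the map $\iota$ appearing in the seesaw. One has to be sure that the $h'$-twist built into the commutative diagram of Section~\ref{sec 4.2} (sending $(h_1,h_2)$ to $(h_1 h', h')$) is absorbed correctly when passing from $\theta^\Dcal(h_1,h_2;\phi,\varphi)$ to $f(\iota(h_1))\overline{\tilde f(\iota(h_2))}$, and that the constant $\frac{2L(1,\Pi,\mathrm{Ad})}{\zeta_k(2)}$ appearing in Theorem~\ref{thm 2.2}, in~(\ref{eqn 2.2}), and in~(\ref{eqn 5.1}) cancels consistently against the local factors $\frac{L_v(1,\Pi,\mathrm{Ad})}{\zeta_v(2)}$ hidden in $\theta_v^{\Dcal,o}$ and in $\Pscr_{\chi,v}^\Dcal$. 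A secondary point requiring care is that for a bad place $v$ the local Hom-space may be zero (when $(\star)$ fails), in which case $\Pscr_{\chi,v}^\Dcal \equiv 0$ and both sides of the theorem vanish on every pure tensor supported there; one should note explicitly that the argument above still goes through, since then $\Tcal(\phi,\varphi) = 0$ as well by Corollary~\ref{cor 4.4}. Granting these compatibility checks — all of which are of a formal, measure-theoretic and normalization nature rather than involving new analysis — the theorem drops out of Corollary~\ref{cor 4.4} together with Theorems~\ref{thm 2.2} and the local Shimizu correspondence of Section~\ref{sec 2.3.1}.
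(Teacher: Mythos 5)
Your proposal is correct and follows essentially the same route as the paper's own proof: reduce the split case to Appendix~\ref{sec C}, then for $K$ a field translate Corollary~\ref{cor 4.4} through the global and local Shimizu correspondences ($\mathbf{Sh}$, $\mathbf{Sh}_v$) so that $\Tcal(\phi,\varphi)$ becomes $\Pcal_\chi^\Dcal$ and $\prod_v \Tcal_v$ becomes $\prod_v \Pscr_{\chi,v}^\Dcal$, and conclude by linearity since the theta lifts span $\Pi^\Dcal\otimes\widetilde{\Pi}^\Dcal$. The extra remarks on multiplicity one and on vanishing local Hom-spaces are harmless but not needed for the argument.
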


\begin{proof}
The case when $K = k \times k$ is proven in Appendix~\ref{sec C}. Suppose $K$ is a quadratic field over $k$.
Take pure tensors $\phi = \otimes_v \phi_v \in \Pi$ and $\varphi = \otimes_v \varphi_v \in S(V_\Dcal(k_\AA))$. Applying the global and local Shimizu correspondence (cf.\ Theorem~\ref{thm 2.2} and Section~\ref{sec 2.3.1}), Corollary~\ref{cor 4.4} implies
\begin{eqnarray}
\Pcal_\chi^\Dcal\big(\textbf{Sh}(\theta^\Dcal(\cdot,\cdot;\phi,\varphi))\big) &=& \Tcal(\phi,\varphi) \nonumber \\
&=& L(\frac{1}{2},\Pi\times \chi) \cdot \prod_v \Tcal_v(\phi_v,\varphi_v) \nonumber \\
&=& L(\frac{1}{2},\Pi\times \chi) \cdot \prod_v \Pscr_{\chi,v}^\Dcal\big(\textbf{Sh}_v(\theta^{\Dcal,o}(\cdot,\cdot;\phi_v,\varphi_v))\big) \nonumber \\
&=& L(\frac{1}{2},\Pi\times \chi) \cdot \Pscr_{\chi}^\Dcal\big(\theta^\Dcal(\cdot,\cdot;\phi, \varphi)\big). \nonumber
\end{eqnarray}
Therefore the result holds.
\end{proof}

\subsection{Non-vanishing criterion}\label{sec 5.1}

For each place $v$ of $k$, we have:

\begin{lem}\label{lem 5.2}
\text{\rm (cf.\ \cite{Tun})} The space $\Hom_{K_v^\times}( \Pi_v^\Dcal, \chi_v^{-1})$ is at most one dimensional. Moreover, $\Hom_{K_v^\times}( \Pi_v^\Dcal, \chi_v^{-1}) \neq 0$ if and only if
\begin{eqnarray}\label{eqn 5.2}
\epsilon_v(\Pi_v \times \chi_v) = \chi_v(-1) \varsigma_{K,v}(-1) \cdot \epsilon_v(\Dcal).
\end{eqnarray}
Here $\epsilon_v(\Pi_v \times \chi_v)$ is the local root number of $L_v(s,\Pi \times \chi)$, and $\epsilon_v(\Dcal)$ is the Hasse invariant of $\Dcal$ at $v$.
\end{lem}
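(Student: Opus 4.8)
The statement to prove is Lemma~\ref{lem 5.2}, the local statement of Tunnell--Saito about the dimension of the space of $K_v^\times$-equivariant functionals and the criterion for its non-vanishing in terms of local epsilon factors and the Hasse invariant.

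\medskip

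The plan is to reduce everything to the two classical local results already cited, namely Tunnell's theorem and Saito's theorem (the latter handling the archimedean and residue-characteristic-two cases in the number field setting, but here we are over a function field of odd characteristic so only the non-archimedean, odd-residue-characteristic case occurs, which is precisely Tunnell's range). First I would recall that for a local field $k_v$ of odd residue characteristic, an infinite-dimensional irreducible admissible representation $\Pi_v$ of $\GL_2(k_v)$, a quadratic \'etale algebra $K_v/k_v$, and a character $\chi_v$ of $K_v^\times$ with $\chi_v|_{k_v^\times}$ equal to the inverse of the central character of $\Pi_v$, the space $\Hom_{K_v^\times}(\Pi_v \otimes \chi_v, \CC)$ is at most one-dimensional. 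This is the local multiplicity-one statement; I would cite it directly from \cite{Tun} (or Prasad's exposition). The one-dimensionality for the non-split quaternion algebra $\Dcal_v$ follows in the same way, since $\Pi_v^\Dcal$ is then finite-dimensional and one invokes the corresponding statement for the unique quaternion division algebra over $k_v$.

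\medskip

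Next, for the dichotomy: Tunnell's theorem says that, fixing $\Pi_v$, $K_v$, and $\chi_v$ as above, exactly one of the two spaces $\Hom_{K_v^\times}(\Pi_v\otimes\chi_v,\CC)$ and $\Hom_{K_v^\times}(\Pi_v^{D_v}\otimes\chi_v,\CC)$ is nonzero (where $D_v$ is the division quaternion algebra and $\Pi_v^{D_v}$ the Jacquet--Langlands transfer, with the convention that $\Hom$ is zero when the transfer does not exist, e.g.\ when $\Pi_v$ is a principal series), and the one that is nonzero is $\GL_2$ precisely when $\epsilon_v(\Pi_v\times\chi_v) = \chi_v(-1)\varsigma_{K,v}(-1)$. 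I would then translate this into the uniform statement of the lemma: writing $\epsilon_v(\Dcal) = +1$ if $\Dcal$ splits at $v$ and $-1$ otherwise, the nonzero space is the one attached to $\Dcal$ at $v$ exactly when $\epsilon_v(\Pi_v\times\chi_v) = \chi_v(-1)\varsigma_{K,v}(-1)\,\epsilon_v(\Dcal)$, which is equation~(\ref{eqn 5.2}). A small point to address here is that the epsilon factor $\epsilon_v(\Pi_v\times\chi_v)$ in the statement is the epsilon factor of the Rankin--Selberg $L$-factor $L_v(s,\Pi\times\chi)$, which by the remark after Theorem~\ref{thm 0.1} is $\epsilon_v$ of $\Pi_v$ twisted by the induced representation $\Ind_{K_v}^{k_v}\chi_v$; I would note that this is the same epsilon factor appearing in Tunnell's criterion, so no further work is needed beyond recording the identification.

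\medskip

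The main obstacle, such as it is, is not a genuine difficulty but a bookkeeping matter: one must be careful that the additive character used to normalize the local epsilon factor is $\psi_v$ (as fixed in Section~\ref{sec 1.1}), and check that Tunnell's criterion is insensitive to the choice of $\psi_v$ among additive characters of $k_v$ --- indeed replacing $\psi_v$ by $\psi_v(b\,\cdot)$ multiplies $\epsilon_v(\Pi_v\times\chi_v)$ by $\eta_v(b)\varsigma_{K,v}(b)$ (a fourth power of the root number of a two-dimensional times a two-dimensional representation under twist), which by the hypothesis $\eta_v\cdot\chi_v|_{k_v^\times}=1$ and the fact that $\varsigma_{K,v} = \chi_v|_{k_v^\times}/\eta_v \cdots$ one checks cancels, so the criterion is well-defined. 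Beyond this, the proof is simply a citation of \cite{Tun}; I would phrase the lemma's proof as a short paragraph pointing to loc.\ cit.\ and to \cite[Lemme 10]{Wa3}, and to Saito's refinement only insofar as it is needed to cover any representation type (supercuspidal, special, principal series) uniformly over $k_v$ of odd residue characteristic.
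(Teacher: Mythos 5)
Your proposal is correct and coincides with the paper's treatment: Lemma~\ref{lem 5.2} is stated there without an independent proof, as a direct appeal to Tunnell \cite{Tun} (together with Waldspurger's Lemme 10 in \cite{Wa3}), which is exactly the citation-plus-normalization argument you give, including translating the $\GL_2$/division-algebra dichotomy into the uniform criterion with the Hasse invariant $\epsilon_v(\Dcal)$. One small correction to your side remark: replacing $\psi_v$ by $\psi_v(b\,\cdot)$ multiplies $\epsilon_v(\Pi_v\times\chi_v)$ by the determinant of the four-dimensional representation evaluated at $b$, namely $\big(\eta_v(b)\,\chi_v(b)\big)^2\,\varsigma_{K,v}(b)^2$ rather than $\eta_v(b)\varsigma_{K,v}(b)$, but this is indeed $1$ under $\eta_v\cdot\chi_v\big|_{k_v^\times}=1$, so your conclusion that the criterion is independent of the choice of $\psi_v$ stands.
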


It is clear that $\Pscr_{\chi,v}^\Dcal$ lies in $\Hom_{K_v^\times}( \Pi_v^\Dcal, \chi_v^{-1}) \otimes \Hom_{K_v^\times}(\widetilde{\Pi}_v^\Dcal, \chi_v)$.
Moreover, following Waldspurger~\cite[Lemme 10]{Wa3} one gets
\begin{lem}\label{lem 5.3}
$\Pscr_{\chi,v}^\Dcal$ is a generator of the $\CC$-vector space $\Hom_{K_v^\times}( \Pi_v^\Dcal, \chi_v^{-1}) \otimes \Hom_{K_v^\times}(\widetilde{\Pi}_v^\Dcal, \chi_v)$.
\end{lem}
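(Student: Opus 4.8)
The plan is to deduce Lemma~\ref{lem 5.3} from the non-vanishing information already extracted in Section~\ref{sec 4} together with the uniqueness statement of Lemma~\ref{lem 5.2}. By Lemma~\ref{lem 5.2}, both $\Hom_{K_v^\times}(\Pi_v^\Dcal,\chi_v^{-1})$ and $\Hom_{K_v^\times}(\widetilde\Pi_v^\Dcal,\chi_v)$ are at most one dimensional, so their tensor product is at most one dimensional; hence it suffices to exhibit a single pair $(f_v,\tilde f_v)$ with $\Pscr_{\chi,v}^\Dcal(f_v\otimes\tilde f_v)\neq 0$, and then $\Pscr_{\chi,v}^\Dcal$ is automatically a generator. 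First I would recall that, via the local Shimizu correspondence $\mathbf{Sh}_v$ of Section~\ref{sec 2.3.1}, $\Pscr_{\chi,v}^\Dcal\big(\mathbf{Sh}_v(\theta_v^{\Dcal,o}(\cdot,\cdot;\phi_v,\varphi_v))\big)$ is a positive multiple of $\Tcal_v(\phi_v,\varphi_v)$, which by Proposition~\ref{prop 4.1} and the definition~(\ref{eqn 4.1}) is a nonzero multiple of $\Zcal_v^o(\tfrac12;\phi_v,\varphi_v)$. So the task reduces to showing that, whenever the $\epsilon$-factor condition~(\ref{eqn 5.2}) holds at $v$ (equivalently $(\star)$, equivalently $\Hom_{K_v^\times}(\Pi_v^\Dcal,\chi_v^{-1})\neq 0$), there exist $\phi_v\in\Pi_v$ and $\varphi_v\in S(V_\Dcal(k_v))$ with $\Zcal_v^o(\tfrac12;\phi_v,\varphi_v)\neq 0$.

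The key steps, in order, are: (i) fix the field case (the split case $K_v=k_v\times k_v$ is handled exactly as in the proof of Lemma~\ref{lem 5.1}, where $\Pscr_{\chi,v}^\Dcal$ is visibly a product of Tate-type local integrals divided by their $L$-factors, each of which is nonvanishing for a suitable choice of Whittaker vector); (ii) recall from Corollary~\ref{cor 3.3}(3) that $\Zcal_v^o(s;\phi_v,\varphi_v)$ converges at $s=\tfrac12$ and, being built from the local zeta integral $\Zcal_v$, defines for each $s$ a nonzero element of $\Hom_{K_v^\times}(\Pi_v,\chi_v^{-1}\otimes|\N|^{\tfrac12-s})\otimes(\text{dual})$ coming from the local theta/Weil machinery; (iii) invoke the local theory of Waldspurger~\cite[Lemme 10]{Wa3} and Tunnell~\cite{Tun}: under hypothesis~(\ref{eqn 5.2}), the local functional produced by the Weil representation on $S(V_\Dcal(k_v))$ realizes the unique (up to scalar) element of $\Hom_{K_v^\times}(\Pi_v^\Dcal,\chi_v^{-1})$ and is not identically zero, so its value on some explicit test vector is nonzero; (iv) transport this nonvanishing through the chain $\Zcal_v\leadsto\Zcal_v^o\leadsto\Tcal_v\leadsto\Pscr_{\chi,v}^\Dcal$, using that all intervening factors ($L_v(1,\varsigma_K)$, $L_v(1,\Pi,\mathrm{Ad})$, $\zeta_v(2)$, $L_v(\tfrac12,\Pi\times\chi)$) are finite nonzero constants at the relevant point, and conclude $\Pscr_{\chi,v}^\Dcal\neq 0$; then one-dimensionality from Lemma~\ref{lem 5.2} upgrades ``nonzero'' to ``generator''.

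An alternative, perhaps cleaner, route avoids producing an explicit test vector locally: one can argue \emph{globally}. Given $\Pi^\Dcal$ and $\chi$ with $\Dcal$ chosen so that $(\star)$, hence~(\ref{eqn 5.2}), holds at \emph{every} place, pick a cuspidal $\Pi$ and $\chi$ for which $L(\tfrac12,\Pi\times\chi)\neq 0$ and for which $P_\chi^\Dcal\not\equiv 0$ on $\Pi^\Dcal$ (such pairs exist by the non-vanishing theorems for Rankin–Selberg $L$-functions over function fields together with Theorem~\ref{thm 5.1}); then $\Pcal_\chi^\Dcal\neq 0$, and by Theorem~\ref{thm 5.1} the global $\Pscr_\chi^\Dcal=\otimes_v\Pscr_{\chi,v}^\Dcal$ is nonzero, forcing each local factor $\Pscr_{\chi,v}^\Dcal$ to be nonzero; combined with Lemma~\ref{lem 5.2} this gives the claim. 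However, this is circular if one wants Lemma~\ref{lem 5.3} as an \emph{input} to the non-vanishing discussion, so I would present the local argument as the primary proof.

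The main obstacle I anticipate is step (iii): making precise that the \emph{specific} functional $\Pscr_{\chi,v}^\Dcal$ — normalized by the ratio of $L$-factors appearing in the definition, and built from $S(V_\Dcal(k_v))$ rather than from an abstract matrix coefficient — is exactly the nonzero generator predicted by Tunnell–Waldspurger, rather than merely \emph{some} element of the Hom-space that might a priori be zero. This requires matching the normalizing $L$-factors and the explicit Weil-representation formula for $\theta_v^{\Dcal,o}$ against the local constructions in \cite{Wa3, Tun}, i.e.\ checking that the ``$\ast$''-normalization chosen in the introduction is precisely the one for which the functional is nondegenerate whenever the dichotomy~(\ref{eqn 5.2}) is on the side of $\Dcal$. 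I expect this to be a careful but essentially bookkeeping verification, parallel to \cite[Lemme 10]{Wa3}, once the local Shimizu correspondence $\mathbf{Sh}_v$ and Proposition~\ref{prop 4.1} are in hand.
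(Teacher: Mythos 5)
Your core reduction is exactly what the paper relies on: by Lemma~\ref{lem 5.2} each Hom-space has dimension at most one, so it suffices to prove $\Pscr_{\chi,v}^\Dcal\neq 0$ whenever~(\ref{eqn 5.2}) holds, and the paper offers no argument beyond citing \cite[Lemme 10]{Wa3} (with \cite{Tun}) for precisely this nonvanishing. Where you diverge is the route to nonvanishing: you propose to pass through $\Zcal_v^o(\tfrac12;\cdot,\cdot)$, $\Tcal_v$ and the local Shimizu map $\mathbf{Sh}_v$, and you flag as the main difficulty the matching of the normalization of $\Pscr_{\chi,v}^\Dcal$ against the Weil-representation construction of \cite{Wa3,Tun}. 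That detour buys nothing and the anticipated obstacle dissolves: $\Pscr_{\chi,v}^\Dcal$ is \emph{by definition} the integral of the matrix coefficient $\langle\Pi_v^\Dcal(\iota(h_v))f_v,\tilde f_v\rangle_v$ against $\chi_v$ over $K_v^\times/k_v^\times$, multiplied by the ratio $L_v(1,\varsigma_K)L_v(1,\Pi,\text{Ad})/\bigl(L_v(\tfrac12,\Pi\times\chi)\zeta_v(2)\bigr)$, which is a finite nonzero constant; hence whether $\Pscr_{\chi,v}^\Dcal$ vanishes identically is a question about the bare matrix-coefficient integral, and that is verbatim the content of Waldspurger's Lemme 10 (a purely local argument, valid in odd residue characteristic), with Tunnell's epsilon-dichotomy~(\ref{eqn 5.2}) governing when the Hom-spaces, hence the integral, are nonzero. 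No comparison of the $L$-factor normalization in~(\ref{eqn 4.1}) with the theta construction is needed, since multiplying by a nonzero scalar cannot affect whether the functional generates an (at most one-dimensional) space; the theta/zeta identities of Section~\ref{sec 4} play no role in this lemma. Convergence is also not an issue: when $K_v$ is a field the quotient $K_v^\times/k_v^\times$ is compact, and in the split case your step (i) — the computation of Lemma~\ref{lem 5.1} expressing $\Pscr_{\chi,v}^\Dcal$ as a product of local Tate-type integrals divided by their $L$-factors, convergent by the Ramanujan bound as in Appendix~\ref{sec C} — already exhibits nonvanishing for suitable Whittaker data. So your proposal is correct once step (iii) is read as a direct application of \cite[Lemme 10]{Wa3} to the matrix-coefficient integral defining $\Pscr_{\chi,v}^\Dcal$, which is exactly the paper's (implicit) proof; and you are right that the alternative global argument via Theorem~\ref{thm 5.1} would be circular here.
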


Consequently, $\Pscr_\chi^\Dcal$ generates the space $\Hom_{K_\AA^\times}( \Pi^\Dcal, \chi^{-1}) \otimes \Hom_{K_\AA^\times}(\widetilde{\Pi}^\Dcal, \chi)$, in which $\Pcal_\chi^\Dcal$ lies. Therefore Theorem~\ref{thm 5.1} implies:

\begin{cor}\label{cor 5.4}
Let $\Pi$ be an automorphic cuspidal representation of $\GL_2(k_\AA)$ with a unitary central character $\eta$. Given a separable quadratic algebra $K$ over $k$ and a unitary Hecke character $\chi : K^\times \backslash K_\AA^\times \rightarrow \CC^\times$ with $\eta \cdot \chi\big|_{k_\AA^\times} = 1$, assume $\prod_v \epsilon_v(\Pi_v\times \chi_v) = 1$.
Let $\Dcal$ be the quaternion algebra over $k$ satisfying \text{\rm (\ref{eqn 5.2})} for every place $v$ of $k$, and $\Pi^\Dcal$ be the automorphic representation of $\Dcal_\AA^\times$ corresponding to $\Pi$ via the Jacquet-Langlands correspondence. Choose an embedding $\iota: K \hookrightarrow \Dcal$.
Then the non-vanishing of the central critical value $L(1/2,\Pi \times \chi)$ is equivalent to the existence of $f \in \Pi^\Dcal$ so that
$$ P_\chi^\Dcal (f) = \int_{K^{\!^\times} k_\AA^\times \backslash K_\AA^\times} f\big(\iota(h)\big) \chi(h) d^\times h \neq 0.$$
\end{cor}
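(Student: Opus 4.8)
The plan is to deduce Corollary~\ref{cor 5.4} from Theorem~\ref{thm 5.1} together with the local multiplicity-one and non-vanishing statements in Lemmas~\ref{lem 5.2} and \ref{lem 5.3}. First I would observe that the existence of $f\in\Pi^\Dcal$ with $P_\chi^\Dcal(f)\neq 0$ is, by definition of $\Pcal_\chi^\Dcal$ via $\Pcal_\chi^\Dcal(f\otimes\tilde f)=P_\chi^\Dcal(f)\cdot P_{\chi^{-1}}^\Dcal(\tilde f)$ and the fact that $\widetilde{\Pi}^\Dcal=\{\bar f: f\in\Pi^\Dcal\}$ carries the conjugate-linear structure, equivalent to the non-triviality of the functional $\Pcal_\chi^\Dcal$ on $\Pi^\Dcal\otimes\widetilde{\Pi}^\Dcal$: indeed if some $P_\chi^\Dcal(f)\neq 0$ then taking $\tilde f=\bar f$ gives $\Pcal_\chi^\Dcal(f\otimes\bar f)=|P_\chi^\Dcal(f)|^2\neq 0$, and conversely non-triviality of $\Pcal_\chi^\Dcal$ forces some $P_\chi^\Dcal(f)\neq 0$.

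Next I would address the hypothesis that $\Dcal$ and $\Pi^\Dcal$ exist. Since $\prod_v\epsilon_v(\Pi_v\times\chi_v)=1$ and $\eta_v(-1)\varsigma_{K,v}(-1)\epsilon_v(\Dcal)$ has product $1$ over all $v$ (the central character identity gives $\prod_v\eta_v(-1)=1$, $\prod_v\varsigma_{K,v}(-1)=1$, and $\prod_v\epsilon_v(\Dcal)=1$ for any quaternion algebra), the set of places $v$ where $(\star)$ would fail for the split algebra has even cardinality, so there is a unique quaternion algebra $\Dcal/k$ with $\epsilon_v(\Dcal)$ matching $(\star)$ at every $v$; and Lemma~\ref{lem 5.2} then shows $\Hom_{K_v^\times}(\Pi_v^\Dcal,\chi_v^{-1})\neq 0$ for all $v$, which in particular forces (via the Jacquet–Langlands correspondence and the local-global compatibility for discrete series) that $\Pi^\Dcal$ is nonzero; I would cite the standard references here rather than reprove it.

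The core of the argument is then: by Lemma~\ref{lem 5.3}, each $\Pscr_{\chi,v}^\Dcal$ is a \emph{nonzero} generator of the (one-dimensional) space $\Hom_{K_v^\times}(\Pi_v^\Dcal,\chi_v^{-1})\otimes\Hom_{K_v^\times}(\widetilde{\Pi}_v^\Dcal,\chi_v)$. Taking the restricted tensor product over all $v$ — using that $\Pscr_{\chi,v}^\Dcal(f_v\otimes\tilde f_v)=1$ at the good places by Lemma~\ref{lem 5.1}, so the product converges and defines a nonzero functional — we get that $\Pscr_\chi^\Dcal=\otimes_v\Pscr_{\chi,v}^\Dcal$ is a nonzero generator of $\Hom_{K_\AA^\times}(\Pi^\Dcal,\chi^{-1})\otimes\Hom_{K_\AA^\times}(\widetilde{\Pi}^\Dcal,\chi)$, a one-dimensional space in which $\Pcal_\chi^\Dcal$ also lies. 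Hence $\Pcal_\chi^\Dcal=c\cdot\Pscr_\chi^\Dcal$ for a scalar $c$, and Theorem~\ref{thm 5.1} identifies $c=L(1/2,\Pi\times\chi)$. Since $\Pscr_\chi^\Dcal\neq 0$, the functional $\Pcal_\chi^\Dcal$ is non-trivial if and only if $L(1/2,\Pi\times\chi)\neq 0$; combined with the first paragraph's equivalence, this is exactly the claimed criterion.

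The main obstacle I anticipate is purely bookkeeping rather than conceptual: one must carefully check that the abstract tensor-product identification of $\Pi^\Dcal$ with $\otimes_v\Pi_v^\Dcal$ used implicitly in forming $\Pscr_\chi^\Dcal=\otimes_v\Pscr_{\chi,v}^\Dcal$ is the \emph{same} one fixed in (\ref{eqn 5.1}) via the Petersson normalization, so that ``$\Pcal_\chi^\Dcal$ and $\Pscr_\chi^\Dcal$ live in the same one-dimensional space'' is literally correct and the scalar comparison in Theorem~\ref{thm 5.1} applies verbatim; and one must confirm that non-triviality of a pure-tensor-valued functional on a restricted tensor product is detected on pure tensors, which is where the good-place normalization $\Pscr_{\chi,v}^\Dcal=1$ and $\langle f_v,\tilde f_v\rangle_v=1$ from Lemma~\ref{lem 5.1} is essential. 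Given those, the corollary is immediate from Theorems~\ref{thm 5.1} and Lemmas~\ref{lem 5.2}, \ref{lem 5.3}.
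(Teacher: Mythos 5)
Your proposal is correct and follows essentially the same route as the paper: Lemmas~\ref{lem 5.2} and \ref{lem 5.3} make $\Pscr_\chi^\Dcal$ a nonzero generator of the one-dimensional space $\Hom_{K_\AA^\times}(\Pi^\Dcal,\chi^{-1})\otimes\Hom_{K_\AA^\times}(\widetilde{\Pi}^\Dcal,\chi)$ in which $\Pcal_\chi^\Dcal$ lies, and Theorem~\ref{thm 5.1} identifies the proportionality constant as $L(\frac{1}{2},\Pi\times\chi)$. The additional points you spell out (the equivalence between non-triviality of $\Pcal_\chi^\Dcal$ and the existence of $f$ with $P_\chi^\Dcal(f)\neq 0$, the parity argument for the existence of $\Dcal$, and the good-place normalization from Lemma~\ref{lem 5.1} ensuring the restricted tensor product functional is nonzero) are left implicit in the paper but are exactly the right details.
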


\appendix{}

\section{Local Shimizu correspondences}\label{sec A}

Recall the Shimizu correspondence stated in Theorem~\ref{thm 2.2}:

\begin{thm}\label{thm A.1}
Given an infinite dimensional automorphic representation $\Pi^\Dcal$ of $\Dcal_\AA^\times$ which is cuspidal if $\Dcal = \Mat_2$, suppose the central character of $\Pi^\Dcal$ is unitary. Then 
$$
\Theta^\Dcal(\Pi)= \{ f_1 \otimes \bar{f}_2 : \Dcal_\AA^\times \times \Dcal_\AA^\times \rightarrow \CC \mid f_1, f_2 \in \Pi^\Dcal \}_{\CC-\text{\rm span}}.
$$
Here $f_1\otimes \bar{f}_2$ is viewed as the function $\big((b,b') \mapsto f_1(b) \cdot \bar{f}_2(b')\big)$.
Consequently, let $\widetilde{\Pi}^\Dcal$ be the contragredient representation of $\Pi^\Dcal$. Identifying $\widetilde{\Pi}^\Dcal$ with the space $\{ \bar{f} \mid f \in \Pi^\Dcal\}$ via the Petersson inner product, the equality~\text{\rm (\ref{eqn Sh})} induces an isomorphism
$$
\text{\bf Sh}: \Theta^\Dcal(\Pi) \cong \Pi^\Dcal \otimes \widetilde{\Pi}^\Dcal.
$$
\end{thm}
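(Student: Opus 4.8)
The plan is to adapt Shimizu's argument \cite{Shi} to the present setting -- a global function field of odd characteristic -- isolating the three inputs on which it rests: the local theta correspondence for the similitude dual pair $(\GL_2,\GO(V_\Dcal))$, the non-vanishing of the global theta lift, and multiplicity one for $\Dcal_\AA^\times$. As a preliminary step I would record that, since $\Pi$ is cuspidal (or, when $\Dcal$ is division, $\Dcal^\times k_\AA^\times\backslash\Dcal_\AA^\times$ is compact), $\phi$ is rapidly decreasing while $\theta^{V_\Dcal}(\cdot,\cdot;\varphi)$ has moderate growth, so $\theta^\Dcal(b_1,b_2;\phi,\varphi)$ converges; it is left $\Dcal^\times\times\Dcal^\times$-invariant, factors through $(\Dcal_\AA^\times\times\Dcal_\AA^\times)/k_\AA^\times$ because both the orthogonal action $[b_1,b_2]$ and the $\GL_2$-argument $\alpha(b_1 b_2^{-1})$ see only the class of $(b_1,b_2)$, and the assignment $(\phi,\varphi)\mapsto\theta^\Dcal(\cdot,\cdot;\phi,\varphi)$ intertwines the natural $\Dcal_\AA^\times\times\Dcal_\AA^\times$-action on $\Pi\otimes S(V_\Dcal(k_\AA))$ (through the Weil representation and the $\alpha$-twist of $\Pi$) with right translation on automorphic forms. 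Thus $\Theta^\Dcal(\Pi)$ is a $\Dcal_\AA^\times\times\Dcal_\AA^\times$-subrepresentation of $\Acal(\Dcal^\times\backslash\Dcal_\AA^\times)\boxtimes\Acal(\Dcal^\times\backslash\Dcal_\AA^\times)$, and the theorem asserts that it equals the image of the injective, equivariant map $\Pi^\Dcal\otimes\widetilde{\Pi}^\Dcal\hookrightarrow\Acal(\Dcal^\times\backslash\Dcal_\AA^\times)\boxtimes\Acal(\Dcal^\times\backslash\Dcal_\AA^\times)$ which, after identifying $\widetilde{\Pi}^\Dcal$ with $\{\bar{f}:f\in\Pi^\Dcal\}$ via the Petersson product, sends $f_1\otimes\bar{f}_2$ to $\big((b,b')\mapsto f_1(b)\overline{f_2(b')}\big)$; its image $\{f_1\otimes\bar{f}_2\}_{\CC-\text{\rm span}}$ is irreducible under $\Dcal_\AA^\times\times\Dcal_\AA^\times$.

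To place $\Theta^\Dcal(\Pi)$ inside this space I would factorize exactly as in the proof of Proposition~\ref{prop 3.2}: using the Whittaker model of $\Pi$ with respect to $\psi$ and unfolding the $\SL_2(k)\backslash\SL_2(k_\AA)$-integral along the unipotent orbit, a pure tensor $\theta^\Dcal(\cdot,\cdot;\phi,\varphi)$ becomes an integral of a product over the places $v$ of the normalized local data $\theta_v^{\Dcal,o}(\cdot,\cdot;\phi_v,\varphi_v)$ of Section~\ref{sec 2.3.1}, which is the content recorded in the identity~(\ref{eqn 2.2}). The input from the local theta correspondence for the dual pair $(\GL_2,\GO(V_\Dcal))$ over $k_v$ in odd residue characteristic -- which, for $\dim_k V_\Dcal=4$, is precisely the Jacquet--Langlands correspondence realized by theta -- is that the local lift of $\Pi_v$, restricted to $\Dcal_v^\times\times\Dcal_v^\times$, is the irreducible representation $\Pi_v^\Dcal\boxtimes\widetilde{\Pi}_v^\Dcal$ (the transfer on the first factor, its contragredient on the second, glued through the main involution $\tau_\Dcal$), so that each $\theta_v^{\Dcal,o}(\cdot,\cdot;\phi_v,\varphi_v)$ is a matrix coefficient of $\Pi_v^\Dcal\otimes\widetilde{\Pi}_v^\Dcal$. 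Reassembling the places, $\Theta^\Dcal(\Pi)$ is abstractly a quotient of $\bigotimes_v\big(\Pi_v^\Dcal\boxtimes\widetilde{\Pi}_v^\Dcal\big)=\Pi^\Dcal\boxtimes\widetilde{\Pi}^\Dcal$; by multiplicity one for $\Dcal_\AA^\times$ such a quotient is realized on automorphic forms only inside $\{f_1\otimes\bar{f}_2\}_{\CC-\text{\rm span}}$, which gives the inclusion $\Theta^\Dcal(\Pi)\subseteq\{f_1\otimes\bar{f}_2\}_{\CC-\text{\rm span}}$.

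The reverse inclusion, and the fact that $\textbf{Sh}$ is an \emph{isomorphism} rather than merely a map into a subspace, follow once one knows $\Theta^\Dcal(\Pi)\neq0$: since $\{f_1\otimes\bar{f}_2\}_{\CC-\text{\rm span}}$ is $\Dcal_\AA^\times\times\Dcal_\AA^\times$-irreducible, a non-zero subrepresentation is the whole space, and transporting the equivariant bijection back through the above map furnishes $\textbf{Sh}\colon\Theta^\Dcal(\Pi)\xrightarrow{\ \sim\ }\Pi^\Dcal\otimes\widetilde{\Pi}^\Dcal$, with injectivity of $\textbf{Sh}$ (that is, $\theta^\Dcal(\cdot,\cdot;\phi,\varphi)=0$ precisely when the corresponding tensor vanishes) automatic. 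I would obtain the non-vanishing, and simultaneously pin down the normalizing constant, by either of the two standard routes: applying the theta correspondence in the reverse direction, from $\GO(V_\Dcal)$ back to $\GL_2$, to $\theta^\Dcal(\cdot,\cdot;\phi,\varphi)$ and checking that it recovers $\phi$ up to the explicit factor $\tfrac{2L(1,\Pi,\text{Ad})}{\zeta_k(2)}$, which is non-zero because $\Pi$ is cuspidal and hence $L(1,\Pi,\text{Ad})\neq0$; or, equivalently, by the Rankin--Selberg / doubling evaluation of $\langle\theta^\Dcal(\cdot,\cdot;\phi,\varphi),\theta^\Dcal(\cdot,\cdot;\phi',\varphi')\rangle_{\text{Pet}}$, which after unfolding and invoking the Siegel--Weil machinery available here (Theorem~\ref{thm 3.1} and the dual pair treated in Appendix~\ref{sec B}) equals $\tfrac{2L(1,\Pi,\text{Ad})}{\zeta_k(2)}$ times a product of local pairings that can be arranged to be non-zero. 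The same computation yields the compatibility with the Petersson product recorded in Remark~\ref{rem 2.3}. The identification of $\widetilde{\Pi}^\Dcal$ with $\{\bar{f}:f\in\Pi^\Dcal\}$ used throughout is legitimate because $\Pi^\Dcal$, having unitary central character, is unitarizable.

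The main obstacle is the local theta correspondence at the ramified places: one must show that the local lift of $\Pi_v$ to $\Dcal_v^\times\times\Dcal_v^\times$ is irreducible, equals $\Pi_v^\Dcal\boxtimes\widetilde{\Pi}_v^\Dcal$, carries the correct $\tau_\Dcal$-equivariance, and respects the split-versus-division dichotomy for $\Dcal_v$ -- the point at which the Hasse invariant and the sign characters implicit in $\varsigma_{K,v}$ enter -- all in odd but possibly small residue characteristic. The companion technical burden is to make the doubling/Siegel--Weil inner-product computation rigorous over a global function field and to keep track of the Tamagawa-measure normalizations of Section~\ref{sec 1.2} so that exactly the constant $\tfrac{2L(1,\Pi,\text{Ad})}{\zeta_k(2)}$ appears with no stray factor; by contrast the metaplectic difficulties that obstruct smaller dual pairs do not arise here, since $\dim_k V_\Dcal=4$ is even and $\SL_2$ acts linearly on $S(V_\Dcal(k_\AA))$.
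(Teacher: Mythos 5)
The first thing to note is that the paper offers no proof of this statement: Theorem~\ref{thm 2.2} (restated as Theorem~\ref{thm A.1}) is simply quoted from Shimizu \cite{Shi}, and the appendix then deduces the factorization~(\ref{eqn 2.2}) and Proposition~\ref{prop A.L} \emph{from} it. Your proposal therefore goes beyond what the paper does: you sketch the standard modern proof --- equivariance of $(\phi,\varphi)\mapsto\theta^\Dcal(\cdot,\cdot;\phi,\varphi)$, identification of the local lifts with $\Pi_v^\Dcal\boxtimes\widetilde{\Pi}_v^\Dcal$ via the local theta correspondence for the similitude pair $(\GL_2,\GO(V_\Dcal))$, multiplicity one for $\Dcal_\AA^\times$ to force the automorphic realization into the span of the $f_1\otimes\bar{f}_2$, and non-vanishing of the global lift to upgrade the inclusion to equality and conclude that $\text{\bf Sh}$ is an isomorphism. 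This is a faithful modernization of Shimizu's argument, and carrying it out would also justify the function-field case that the bare citation to \cite{Shi} leaves implicit. Note that for the theorem itself only non-vanishing is needed; the exact constant $2L(1,\Pi,\mathrm{Ad})/\zeta_k(2)$ belongs to Theorem~\ref{thm A.2} and Remark~\ref{rem 2.3}, not to this statement.

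One concrete caution about how you phrase the factorization step: within this paper, the identity~(\ref{eqn 2.2}) and the matrix-coefficient description of the $\theta_v^{\Dcal,o}$ (Proposition~\ref{prop A.L}) are consequences of the very theorem you are proving --- the split case of Theorem~\ref{thm A.2} is deduced from the Shimizu correspondence via Proposition~\ref{prop A.5} and Lemma~\ref{lem A.6}, and Proposition~\ref{prop A.L} in turn rests on Theorem~\ref{thm A.2} --- so you cannot quote them as inputs without circularity. What you may legitimately use is the unfolding itself (the analogue of Proposition~\ref{prop A.5}, which needs no Shimizu input), together with an independent proof that each local integral is a matrix coefficient of $\Pi_v^\Dcal\otimes\widetilde{\Pi}_v^\Dcal$; you do acknowledge this, and it is exactly the ``main obstacle'' you name: the local correspondence at ramified places, including the identification with the Jacquet--Langlands transfer and its contragredient, plus a genuinely global non-vanishing argument (for which the see-saw and the Siegel--Weil input of Appendix~\ref{sec B} suffice in the division case, and the Whittaker computation in the split case). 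With those two inputs supplied your outline is correct; as written it is a strategy with the essential technical content deferred, which is precisely the content of the theorem of Shimizu that the paper cites.
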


Recall that for $\phi_v \in \Pi_v$ and $\varphi_v \in S(V_\Dcal(k_v))$, 
we define
$\theta_v^{\Dcal,o} (b_v,b_v';\phi_v,\varphi_v)$ for $b_v,b_v' \in \Dcal_v^\times$ in Section~\ref{sec 2.3.1} by
\begin{eqnarray}
&& \theta_v^{\Dcal,o} (b_v,b_v';\phi_v,\varphi_v) \nonumber \\
&=& \frac{\zeta_v(2)}{L_v(1,\Pi,\Ad)} \cdot
\int_{\U(k_v) \backslash \SL_2(k_v)} W_{\phi_v}\big(g_v^1 \alpha(b_v b_v'^{-1})\big) \cdot 
\big(\omega^\Dcal_v(g_v^1 \alpha(b_v b_v'^{-1}), [b_v,b_v'])\varphi_v\big)(1) d g_v^1. \nonumber
\end{eqnarray}

\begin{lem}
Suppose $v$ is \lq\lq good,\rq\rq\
i.e.\ $\psi_v$ has trivial conductor, the representation $\Pi_v$ is an unramified principle series, the vector $\phi_v \in \Pi_v$ is spherical with $W_{\phi_v}\begin{pmatrix} 1& 0 \\ 0 &1 \end{pmatrix} = 1$, the quaternion algebra $\Dcal_v = \Mat_2(k_v)$, and the Schwartz function $\varphi_v = \mathbf{1}_{\Mat_2(O_v)}$.
One has
$$\theta_v^{\Dcal,o}(b_v,b_v';\phi_v,\varphi_v) = 1, \quad \forall b_v,b_v' \in \GL_2(O_v).$$
\end{lem}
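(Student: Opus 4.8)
The plan is to compute $\theta_v^{\Dcal,o}(b_v,b_v';\phi_v,\varphi_v)$ directly from its integral definition under the ``good'' hypotheses, reducing the unfolded integral over $\U(k_v)\backslash\SL_2(k_v)$ to a sum of spherical Whittaker values weighted against values of the Gaussian $\varphi_v=\mathbf 1_{\Mat_2(O_v)}$. First I would use that $b_v,b_v'\in\GL_2(O_v)$, together with the $\GL_2(O_v)$-equivariance properties of the Weil representation $\omega^\Dcal_v$ (the action of $[b_v,b_v']$ is by $x\mapsto b_v x b_v'^{-1}$, which preserves $\Mat_2(O_v)$ and hence fixes $\mathbf 1_{\Mat_2(O_v)}$), and the fact that $\Nr_{\Dcal/k}(b_v b_v'^{-1})\in O_v^\times$ so that $\alpha(b_v b_v'^{-1})$ acts trivially after the normalization, to reduce to the case $b_v=b_v'=1$. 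Thus it suffices to show
$$\frac{\zeta_v(2)}{L_v(1,\Pi,\Ad)}\int_{\U(k_v)\backslash\SL_2(k_v)} W_{\phi_v}(g_v^1)\cdot\big(\omega^\Dcal_v(g_v^1)\varphi_v\big)(1)\,dg_v^1=1.$$

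The second step is to carry out this integral using the Iwasawa decomposition $\SL_2(k_v)=\U(k_v)A(k_v)\SL_2(O_v)$, where $A$ is the diagonal torus. The $\SL_2(O_v)$-integral is harmless since both $W_{\phi_v}$ and $\varphi_v$ are spherical (the latter because $\SL_2(O_v)$ preserves $\Mat_2(O_v)$ and the Weil representation at a place of trivial conductor acts on $\mathbf 1_{\Mat_2(O_v)}$ through at most a root of unity that is in fact trivial in the unramified situation). What remains is an integral over $A(k_v)\cong k_v^\times$: writing $t=\diag(\varpi_v^n,\varpi_v^{-n})$, one needs the classical formula for the unramified Whittaker value, $W_{\phi_v}(\diag(\varpi_v^n,1))=q_v^{-n/2}\,\tfrac{\alpha^{n+1}-\beta^{n+1}}{\alpha-\beta}$ for $n\ge0$ (and $0$ for $n<0$), where $\alpha,\beta$ are the Satake parameters of $\Pi_v$, and the value $\big(\omega^\Dcal_v(t)\mathbf 1_{\Mat_2(O_v)}\big)(1)$, which one computes from the explicit action of the diagonal torus in the Schrödinger model of the Weil representation on $S(\Mat_2(k_v))$ — essentially $|\det|^{?}$ times an indicator condition on the lattice, picking out a finite geometric-type sum in $n$. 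Assembling, the $A$-integral becomes an explicit rational function in $q_v^{-1},\alpha,\beta$, which I expect to equal $\tfrac{L_v(1,\Pi,\Ad)}{\zeta_v(2)}$ after recalling $L_v(1,\Pi,\Ad)=\big((1-\alpha\beta^{-1}q_v^{-1})(1-q_v^{-1})(1-\alpha^{-1}\beta q_v^{-1})\big)^{-1}$ and $\zeta_v(2)=(1-q_v^{-2})^{-1}$, so that the normalizing constant cancels it to give $1$.

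The main obstacle will be the bookkeeping in the second step: getting the normalizations of the Haar measure $dg_v^1$ on $\U(k_v)\backslash\SL_2(k_v)$, the self-dual measure underlying $\omega^\Dcal_v$, and the absolute-value powers in the extended Weil action exactly right, so that the geometric sum over $n$ telescopes into the adjoint $L$-factor with no spurious constant. An alternative, cleaner route that sidesteps the explicit sum is to invoke the local Rankin–Selberg / doubling computation already implicit in Corollary~\ref{cor 3.3}(1) and Proposition~\ref{prop 4.1}: the quantity $\theta_v^{\Dcal,o}(1,1;\phi_v,\varphi_v)$ is, up to the constant $\tfrac{\zeta_v(2)}{L_v(1,\Pi,\Ad)}$, the local zeta integral whose value at good places is pinned down there, and one can also appeal to the fact that $\theta_v^{\Dcal,o}$ is a matrix coefficient of $\Pi_v^\Dcal\otimes\widetilde\Pi_v^\Dcal$ normalized so that the spherical-times-spherical coefficient is $\langle f_v,\tilde f_v\rangle_v=1$. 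I would present the direct computation for self-containedness but remark that consistency with Lemma~\ref{lem 5.1} provides an independent check.
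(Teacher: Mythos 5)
Your proposal is correct and follows essentially the same route as the paper: after noting that $b_v,b_v'\in\GL_2(O_v)$ act trivially on the spherical data, the paper likewise applies the Iwasawa decomposition $\SL_2(k_v)=B^1(k_v)\cdot\SL_2(O_v)$ to reduce $\theta_v^{\Dcal,o}$ to the torus integral $\int_{k_v^\times}W_{\phi_v}\!\begin{pmatrix}a_v&0\\0&a_v^{-1}\end{pmatrix}\mathbf{1}_{O_v}(a_v)\,d^\times a_v$, whose value $L_v(1,\Pi,\mathrm{Ad})/\zeta_v(2)$ (the geometric series in the Satake parameters you describe) cancels the normalizing factor. The only difference is that the paper states this last evaluation without writing out the Macdonald-formula computation you spell out.
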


\begin{proof}
From the Iwasawa decomposition $\SL_2(k_v) = B^1(k_v) \cdot \SL_2(O_v)$, the above assumptions imply that for $b_v,b_v' \in \GL_2(O_v)$, we have
\begin{eqnarray}
\theta_v^{\Dcal,o}(b_v,b_v';\phi_v,\varphi_v) &=&
\frac{\zeta_v(2)}{L_v(1,\Pi,\Ad)} \cdot \int_{k_v^\times} W_{\phi_v}\begin{pmatrix} a_v & 0 \\ 0 & a_v^{-1}\end{pmatrix} \mathbf{1}_{O_v}(a_v) d^\times a_v \nonumber \\
&=& 1\nonumber
\end{eqnarray}
\end{proof}

The aim of this section is to show:

\begin{thm}\label{thm A.2}
Given pure tensors $\phi = \otimes_v \phi_v \in \Pi$ and $\varphi = \otimes_v \varphi_v \in S(V_\Dcal(k_\AA))$,
we have
$$\int_{\Dcal^\times k_\AA^\times \backslash \Dcal_\AA^\times} \theta^\Dcal(bb_1,bb_2;\phi,\varphi) d^\times b
= \frac{2 L(1,\Pi,\text{\rm Ad})}{\zeta_k(2)} \cdot \prod_v \theta_v^{\Dcal,o}(b_{1,v}, b_{2,v}; \phi_v,\varphi_v), \quad \forall b_1,b_2 \in \Dcal_\AA^\times.$$
\end{thm}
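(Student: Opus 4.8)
The plan is to unfold the definition of $\theta^\Dcal$ as an integral over $\SL_2(k)\backslash\SL_2(k_\AA)$, combine it with the outer integral over $\Dcal^\times k_\AA^\times\backslash\Dcal_\AA^\times$, and recognize the combined integral as a Rankin--Selberg-type unfolding that factors through the Shimizu correspondence. Concretely, write
\[
\int_{\Dcal^\times k_\AA^\times\backslash\Dcal_\AA^\times}\theta^\Dcal(bb_1,bb_2;\phi,\varphi)\,d^\times b
=\int_{\Dcal^\times k_\AA^\times\backslash\Dcal_\AA^\times}\int_{\SL_2(k)\backslash\SL_2(k_\AA)}\phi\bigl(g^1\alpha(b_1b_2^{-1})\bigr)\,\theta^{V_\Dcal}\bigl(g^1\alpha(b_1b_2^{-1}),[bb_1,bb_2];\varphi\bigr)\,dg^1\,d^\times b,
\]
noting that $\alpha(b b_1 (bb_2)^{-1})=\alpha(b_1b_2^{-1})$ so the $\GL_2$-argument does not see the outer variable $b$; only the orthogonal argument $[bb_1,bb_2]$ does. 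The first step is therefore to swap the order of integration and perform the $b$-integral first, exploiting that $\theta^{V_\Dcal}(g^1,\cdot;\varphi)$ is left $\GO(V_\Dcal)(k)$-invariant and expanding $\theta^{V_\Dcal}=\sum_{x\in V_\Dcal(k)}(\omega^{V_\Dcal}(g^1,[bb_1,bb_2])\varphi)(x)$. Splitting the sum over $x$ into the $\GO(V_\Dcal)(k)$-orbits (the zero orbit, the anisotropic orbits indexed by reduced norm, and — when $\Dcal=\Mat_2$ — the isotropic cone) and using cuspidality of $\phi$ to kill the degenerate contributions, one isolates the main orbit whose stabilizer is (a form of) $\Dcal^1$ embedded diagonally, converting the $b$-integral into an Eulerian object.

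The second step is to decompose the adelic integral into local pieces. After the orbit analysis, the $b$-integral over $\Dcal^\times k_\AA^\times\backslash\Dcal_\AA^\times$ together with the $g^1$-integral over $\SL_2(k)\backslash\SL_2(k_\AA)$ collapses (via the standard "unfolding" that replaces $\SL_2(k)\backslash\SL_2(k_\AA)$ by $\U(k)\backslash\SL_2(k_\AA)$, then $\U(k_v)\backslash\SL_2(k_v)$ locally, and replaces $\phi$ by its Whittaker function $W_\phi=\prod_v W_{\phi_v}$) into a product over places of exactly the local integrals
\[
\int_{\U(k_v)\backslash\SL_2(k_v)}W_{\phi_v}\bigl(g_v^1\alpha(b_{1,v}b_{2,v}^{-1})\bigr)\cdot\bigl(\omega^\Dcal_v(g_v^1\alpha(b_{1,v}b_{2,v}^{-1}),[b_{1,v},b_{2,v}])\varphi_v\bigr)(1)\,dg_v^1,
\]
up to a global constant. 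Keeping careful track of the Tamagawa-measure normalizations from Section~\ref{sec 1.2} — in particular $\mathrm{vol}(\Dcal^\times k_\AA^\times\backslash\Dcal_\AA^\times)=2$ and the self-dual local measures — and comparing with the normalizing factor $\zeta_v(2)/L_v(1,\Pi,\Ad)$ built into the definition of $\theta_v^{\Dcal,o}$, the Euler product of these local integrals becomes $\prod_v\theta_v^{\Dcal,o}(b_{1,v},b_{2,v};\phi_v,\varphi_v)$, and the leftover global constant is precisely $\dfrac{2L(1,\Pi,\Ad)}{\zeta_k(2)}$ (the $L(1,\Pi,\Ad)$ arising because the product $\prod_v \zeta_v(2)/L_v(1,\Pi,\Ad)$ does not converge and must be regularized against the divergent Euler product of the local integrals, exactly as in the classical Rankin--Selberg computation of the Petersson norm). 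One should also invoke the preceding Lemma to see that almost all local factors equal $1$, so the product is genuinely finite and the identity makes sense place-by-place.

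The main obstacle is the second step: justifying the interchange of the (non-absolutely convergent) sum over $V_\Dcal(k)$ with the two adelic integrations, and then extracting the Euler factorization in a way that correctly pins down the global constant. This is where one must be most careful — the naive Euler product $\prod_v\theta_v^{\Dcal,o}$ against $\prod_v\zeta_v(2)/L_v(1,\Pi,\Ad)^{-1}$ is only conditionally convergent, so the argument has to be organized as a limit (e.g.\ via a partial-Eisenstein-series or truncation device, or by citing the Rankin--Selberg unfolding already used in Section~\ref{sec 3}) rather than a formal manipulation. The cuspidality of $\Pi$ (when $\Dcal=\Mat_2$) is essential here to discard the isotropic-orbit contribution and to guarantee the relevant integrals converge; in the division-algebra case $V_\Dcal$ is anisotropic and this issue disappears, so it may be cleanest to treat $\Dcal=\Mat_2$ and $\Dcal$ division separately. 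Everything else — the orbit bookkeeping, the passage to Whittaker functions, and the measure comparison — is routine once this analytic point is handled, and the shape of the answer is forced by matching the normalization in Remark~\ref{rem 2.3}.
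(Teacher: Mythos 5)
Your plan has the right Rankin--Selberg flavor, but it is missing the two ingredients that actually carry the paper's proof, and the step you lean on to produce the constant is not an argument. First, the claim that $2L(1,\Pi,\Ad)/\zeta_k(2)$ arises by ``regularizing'' the divergent product $\prod_v \zeta_v(2)/L_v(1,\Pi,\Ad)$ against a divergent product of local integrals cannot work as stated: the right-hand side of Theorem~\ref{thm A.2} is a \emph{finite} product (almost all factors are $1$), and the identity is exact for fixed pure tensors; the genuine difficulty is that the unnormalized local integrals are only known to converge at the edge of the range of absolute convergence, so one needs an actual device to legitimize the Euler factorization. In the division case the paper's device is the auxiliary variable $s$: it introduces $I^\Dcal(g^1,s,\varphi)$, unfolds $J^\Dcal(s;\phi,\varphi)$ in a half-plane, computes the unramified factors $L_v(s,\Pi,\Ad)/\zeta_v(2s)$ (Lemma~\ref{lem A.8}), and only then sets $s=1$; and the passage from the left-hand side of the theorem to $2\,J^\Dcal(1;\phi',\varphi')$ is precisely the Siegel--Weil formula for the metaplectic dual pair $(\widetilde{\SL}_2,\Oo(\Dcal^o))$ (Theorem~\ref{thm B.1} and Remark~\ref{rem B.2}), a non-routine input to which Appendix~\ref{sec B} is devoted and which also accounts for the factor $2$. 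Your step one does not substitute for this: since $b$ acts on the kernel by conjugation, decomposing $V_\Dcal(k)$ into orbits and integrating over $b$ first produces toric periods over centralizers $k(x)^\times$ (a relative-trace-type expansion), not an Eulerian object, and the stabilizer of the ``main orbit'' is not a diagonal copy of $\Dcal^1$ in the relevant sense; the Eulerian structure only appears after unfolding against the cusp form in the $g$-variable, and the remaining $b$-integral of the kernel still has to be evaluated, which is exactly what Siegel--Weil does.

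Second, for $\Dcal=\Mat_2$ the very first move of your plan --- swap the integrals and do the $b$-integral of the theta kernel first --- is illegitimate: $\int_{\Dcal^\times k_\AA^\times\backslash\Dcal_\AA^\times}\theta^{V_\Dcal}(g,[bb_1,bb_2];\varphi)\,d^\times b$ diverges because the trace-zero part of $\Mat_2$ is isotropic, and cuspidality of $\phi$ cannot be invoked to discard degenerate orbits before a swap that is itself not justified. The paper's route in Section~\ref{sec A.1} is different in substance: it computes the Whittaker coefficient of $\theta^\Dcal(\cdot,\cdot;\phi,\varphi)$ in the two quaternionic variables and shows it factors into local Whittaker functions (Proposition~\ref{prop A.5}, via a partial Fourier transform and Poisson summation), identifies the left-hand side through the Shimizu correspondence (Theorem~\ref{thm 2.2}) as a Petersson matrix coefficient, and imports the constant $2L(1,\Pi,\Ad)/\zeta_k(2)$ from the known factorization of the Petersson pairing into local Whittaker-model pairings, with Lemma~\ref{lem A.6} converting those local pairings into the factors $\theta_v^{\Dcal,o}$. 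So while you correctly sense that the two cases should be treated separately and that the constant has a Rankin--Selberg origin, the proposal as written lacks the metaplectic Siegel--Weil formula (division case) and the Whittaker-coefficient/Petersson-factorization mechanism (split case), and these are the actual content of the proof rather than routine bookkeeping.
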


The proof of the above theorem is given in Section~\ref{sec A.1} when $\Dcal = \Mat_2$, and in Section~\ref{sec A.2} when $\Dcal$ is division.\\

Via the Petersson pairing $\langle \cdot, \cdot \rangle_{\text{Pet}}^\Dcal: \Pi^\Dcal \times \widetilde{\Pi}^\Dcal$, the representation $\Pi^\Dcal \otimes \widetilde{\Pi}^\Dcal$ is isomorphic to the space of the \textit{matrix coefficients} of $\Pi^\Dcal \otimes \widetilde{\Pi}^\Dcal$:
$$f\otimes \tilde{f} \longleftrightarrow m_{f\otimes \tilde{f}} \quad \forall f\otimes \tilde{f} \in \Pi^\Dcal \times \widetilde{\Pi}^\Dcal,$$
where
$$m_{f\otimes \tilde{f}}(b,b') := \langle \Pi^\Dcal(b) f, \widetilde{\Pi}^\Dcal(b') \tilde{f}\rangle_{\text{Pet}}, \quad \forall b,b' \in \AA_\Dcal^\times.$$
On the other hand,
for each place $v$ of $k$, we may also identify $\Pi^\Dcal_v \otimes \widetilde{\Pi}^\Dcal_v$ with the space of matrix coefficients, i.e.\ for $f_v \in \Pi^\Dcal_v$ and $\tilde{f}_v \in \widetilde{\Pi}_v^\Dcal$, the matrix coefficient $m_{f_v \otimes \tilde{f}_v}$ associated to $m_{f_v \otimes \tilde{f}_v}$ is defined by
$$m_{f_v \otimes \tilde{f}_v}(b_v, b_v') := \langle \Pi_v^\Dcal(b_v) f_v, \widetilde{\Pi}_v^\Dcal(b_v') \tilde{f}_v\rangle_v^\Dcal, \quad \forall b_v, b_v' \in \Dcal_v^\times.$$
Here $\langle \cdot,\cdot\rangle_v^\Dcal : \Pi_v^\Dcal \times \widetilde{\Pi}_v^\Dcal \rightarrow \CC$ is the natural duality pairing.
Put $$\Theta_v^\Dcal(\Pi_v):= \{ \theta_v^{\Dcal,o}(\cdot,\cdot;\phi_v,\varphi_v)\mid \phi_v \in \Pi_v,\ \varphi_v \in S(V_\Dcal(k_v))\}.$$

\begin{prop}\label{prop A.L}
We have the following equality:
$$\Theta_v^\Dcal(\Pi_v) = \{ m_{f_v \otimes \tilde{f}_v} \mid f_v \in \Pi_v^\Dcal,\ \tilde{f}_v \in \widetilde{\Pi}_v^\Dcal\}_{\CC-\text{\rm span}}.$$
This induces an isomorphism $\text{\bf Sh}_v : \Theta_v^\Dcal(\Pi_v) \cong \Pi_v^\Dcal \otimes \widetilde{\Pi}_v^\Dcal$.
\end{prop}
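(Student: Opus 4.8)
The plan is to deduce the proposition from Theorem~\ref{thm A.2} by a multiplicativity-of-factorizable-data argument. First I would fix a place $v_0$ and specialize the global identity in Theorem~\ref{thm A.2} to pure tensors that are spherical and normalized at all places away from $v_0$: by the previous lemma each local factor $\theta_w^{\Dcal,o}(b_{1,w},b_{2,w};\phi_w,\varphi_w)$ with $w\neq v_0$ equals $1$ for $b_{1,w},b_{2,w}\in\GL_2(O_w)$ (or, when $\Dcal_w$ is division, on the analogous maximal compact), so the infinite product collapses to the single factor at $v_0$ times the constant $2L(1,\Pi,\mathrm{Ad})/\zeta_k(2)$. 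Thus, after dividing by this constant, the global integral $\int_{\Dcal^\times k_\AA^\times\backslash\Dcal_\AA^\times}\theta^\Dcal(bb_1,bb_2;\phi,\varphi)\,d^\times b$ is identified, as a function of $(b_{1,v_0},b_{2,v_0})$, with $\theta_{v_0}^{\Dcal,o}(\cdot,\cdot;\phi_{v_0},\varphi_{v_0})$. On the other hand, the left-hand integral is, by the global Shimizu correspondence (Theorem~\ref{thm A.1}), a matrix coefficient of $\Pi^\Dcal\otimes\widetilde{\Pi}^\Dcal$: writing $\mathbf{Sh}(\theta^\Dcal(\cdot,\cdot;\phi,\varphi))=\sum_i f_{1,i}\otimes\bar f_{2,i}$ and using the factorization~(\ref{eqn 5.1}) of the Petersson pairing into local pairings, the integral over the central quotient of $\Dcal_\AA^\times$ against the constant function (which is what appears once the auxiliary places are spherical) picks out exactly the $v_0$-component of the matrix coefficient, up to the constants built into~(\ref{eqn 5.1}). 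Comparing the two expressions shows $\theta_{v_0}^{\Dcal,o}(\cdot,\cdot;\phi_{v_0},\varphi_{v_0})$ lies in the span of the $m_{f_{v_0}\otimes\tilde f_{v_0}}$.

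Next I would establish the reverse inclusion. For this, run the same specialization but now let $\phi_{v_0},\varphi_{v_0}$ vary freely while keeping the other places spherical: as $(\phi_{v_0},\varphi_{v_0})$ ranges over $\Pi_{v_0}\otimes S(V_\Dcal(k_{v_0}))$, the resulting $v_0$-local matrix coefficients obtained from $\mathbf{Sh}$ span a nonzero $\Dcal_{v_0}^\times\times\Dcal_{v_0}^\times$-invariant subspace of $\Pi_{v_0}^\Dcal\otimes\widetilde{\Pi}_{v_0}^\Dcal$. Since $\Pi_{v_0}^\Dcal$ is irreducible (smooth admissible), $\Pi_{v_0}^\Dcal\otimes\widetilde{\Pi}_{v_0}^\Dcal$ is irreducible as a $\Dcal_{v_0}^\times\times\Dcal_{v_0}^\times$-representation, so this subspace is everything, giving $\{m_{f_{v_0}\otimes\tilde f_{v_0}}\}_{\CC\text{-span}}\subseteq\Theta_{v_0}^\Dcal(\Pi_{v_0})$. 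The two inclusions together give the claimed equality of spaces, and the induced bijection $\mathbf{Sh}_{v_0}$ is then automatically a $\Dcal_{v_0}^\times\times\Dcal_{v_0}^\times$-equivariant isomorphism because both sides carry the translation action and the identification of $\theta_{v_0}^{\Dcal,o}$ with a matrix coefficient is equivariant by construction (the variables $b_v,b_v'$ in $\theta_v^{\Dcal,o}(b_v,b_v';\cdot,\cdot)$ transform the same way as in $m_{f_v\otimes\tilde f_v}$).

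The main obstacle is the bookkeeping of constants and measures in passing from the global identity to a clean local statement: one must verify that, after the auxiliary places are made spherical, the global integral against the trivial function really does reproduce the single local matrix coefficient with no stray factor, which forces a careful comparison of the normalization~(\ref{eqn 5.1}), the Tamagawa volume $\mathrm{vol}(\Dcal^\times k_\AA^\times\backslash\Dcal_\AA^\times)=2$, and the factor $2L(1,\Pi,\mathrm{Ad})/\zeta_k(2)$ appearing in Theorem~\ref{thm A.2}. A secondary subtlety is ensuring that enough pure tensors $\theta^\Dcal(\cdot,\cdot;\phi,\varphi)$ exist which are spherical at all but one place — this follows from the fact that at good places the spherical $\varphi_v=\mathbf{1}_{\Mat_2(O_v)}$ together with a spherical $\phi_v$ give $\theta_v^{\Dcal,o}\equiv 1$, so such tensors are plentiful. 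Everything else is the irreducibility of $\Pi_{v_0}^\Dcal\otimes\widetilde{\Pi}_{v_0}^\Dcal$ and the already-proven Theorem~\ref{thm A.2}; this is why the proposition is labeled ``straightforward.''
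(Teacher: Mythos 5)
Your plan follows essentially the same route as the paper's own proof: fix auxiliary data away from $v_0$, use Theorem~\ref{thm A.2} to collapse the global integral $\int_{\Dcal^\times k_\AA^\times\backslash\Dcal_\AA^\times}\theta^\Dcal(bb_1,bb_2;\phi,\varphi)\,d^\times b$ to the single local factor $\theta_{v_0}^{\Dcal,o}$, and identify that global integral as a (product of local) matrix coefficient(s) via the global Shimizu correspondence, so the argument is correct in substance. The one adjustment needed is at the finitely many ramified places $w\neq v_0$, where no spherical choice makes the local factor equal to $1$; as in the paper one instead fixes local data there whose value at $(1,1)$ is a nonzero constant (guaranteed by choosing $\phi^o,\varphi^o$ with $C(\phi^o,\varphi^o)\neq 0$) and normalizes by that constant --- precisely the bookkeeping you flag --- and one should cite only the existence of some factorization of the Petersson pairing into local invariant pairings rather than the specific normalization~(\ref{eqn 5.1}), which is itself calibrated using the local Shimizu correspondence.
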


\begin{proof}
Pick $\phi^o = \otimes_v \phi^o_{v} \in \Pi $ and $\varphi^o = \otimes_v \varphi^o_{v} \in S(V_\Dcal(k_\AA))$ so that
$$ C(\phi^o,\varphi^o):= \int_{\Dcal^\times k_\AA^\times \backslash \Dcal_\AA^\times} \theta^{\Dcal}(b,b;\phi^o,\varphi^o) db \neq 0.$$ 
Then for each place $v_0$ of $k$,
the space of matrix coefficients of $\Pi_{v_0}^\Dcal$ can be generated by $m_{v_0}(\phi_{v_0},\varphi_{v_0})$ for $\phi_{v_0} \in \Pi_{v_0}$ and $\varphi_{v_0} \in S(V_\Dcal(k_v))$, where $m_v(\phi_{v_0},\varphi_{v_0})$ is defined by:
$$m_v(\phi_{v_0},\varphi_{v_0})(b_{v_0},b_{v_0}') := \int_{\Dcal^\times k_\AA^\times \backslash \Dcal_\AA^\times} \theta^\Dcal(b b_{v_0}, bb_{v_0}';\phi, \varphi) db, \quad \forall b_{v_0},b_{v_0}' \in \Dcal_{v_0}^\times, $$
where $\phi = \phi_{v_0} \otimes_{v\neq v_0} \phi^o_{v} \in \Pi$ and $\varphi = \varphi_{v_0} \otimes_{v \neq v_0} \varphi^o_{v} \in S(V_\Dcal(k_\AA))$.
By Theorem~\ref{thm A.2} we may assume that the chosen $\phi^o$ and $\varphi^o$ satisfy
$$C(\phi^o,\varphi^o) = \theta_{v_0}^{\Dcal,o}(1,1;\phi^o_{v_0},\varphi^o_{v_0}).$$
Then
\begin{eqnarray}
m_{v_0}(\phi_{v_0},\varphi_{v_0})(b_{v_0},b_{v_0}')
&=& \frac{ \int_{\Dcal^\times k_\AA^\times \backslash \Dcal_\AA^\times} \theta^\Dcal(b b_{v_0}, bb_{v_0}';\phi, \varphi) db}{C(\phi^o,\varphi^o)} \cdot C(\phi^o,\varphi^o) \nonumber \\
&=&\frac{\theta_{v_0}^{\Dcal,o}(b_{v_0},b_{v_0}'; \phi_{v_0},\varphi_{v_0})}{\theta_{v_0}^{\Dcal,o}(1,1; \phi^o_{v_0},\varphi^o_{v_0})} \cdot C(\phi^o,\varphi^o) \quad \quad \text{(by Theorem~\ref{thm A.2})}
 \nonumber \\
&=& \theta_{v_0}^{\Dcal,o}(b_{v_0},b_{v_0}'; \phi_{v_0},\varphi_{v_0}).\nonumber
\end{eqnarray}

\end{proof}

\subsection{Proof of Theorem~\ref{thm A.2} when $\Dcal = \Mat_2$} \label{sec A.1}

Given $\phi \in \Pi$ and $\varphi \in S(V_\Dcal(k_\AA))$, consider the Whittaker function associated to $\phi$ and $\varphi$:
\begin{eqnarray}
&&\Wcal_{\phi,\varphi}(b_1,b_2) \nonumber \\
&:=&
\int_{k\backslash k_\AA} \int_{k\backslash k_\AA} \theta^\Dcal\left(\begin{pmatrix} 1 & u_1 \\ 0 & 1 \end{pmatrix} b_1, 
\begin{pmatrix} 1 & u_2 \\ 0 & 1 \end{pmatrix} b_2; \phi, \varphi\right) \psi(u_2-u_1) du_1du_2, \quad \forall b_1,b_2 \in \GL_2(k_\AA).
\nonumber
\end{eqnarray}
Then:

\begin{prop}\label{prop A.5}
When $\phi$ and $\varphi$ are both pure tensors, we have
$$\Wcal_{\phi,\varphi} = \prod_v \Wcal_{\phi,\varphi,v}.$$
Here for $b_1,b_2 \in \GL_2(k_v)$, let
$$\Wcal_{\phi,\varphi,v}(b_1,b_2):= \int_{\U(k_v)\backslash \SL_2(k_v)} W_{\phi_v}(g^1 \alpha(b_1b_2^{-1})) \cdot 
\Big(\omega_v^\Dcal(g^1 \alpha(b_1b_2^{-1}),[b_1,b_2]) \varphi_v\Big)^\sim\begin{pmatrix}1&1\\0&1\end{pmatrix} d g^1,$$
and
$$(\varphi_v)^\sim\begin{pmatrix} a&b\\c&d\end{pmatrix} := \int_{k_v} \varphi_v\begin{pmatrix} a&b'\\c&d\end{pmatrix} \psi_v(bb') db' \quad \text{ for $\varphi_v \in S(V_\Dcal(k_v))$.}$$
\end{prop}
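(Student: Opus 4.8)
The plan is to reduce the statement to the \emph{global basic identity}
$$
\Wcal_{\phi,\varphi}(b_1,b_2)=\int_{\U(k_\AA)\backslash\SL_2(k_\AA)}W_\phi\big(g^1\alpha(b_1b_2^{-1})\big)\cdot\Big(\omega^{V_\Dcal}\big(g^1\alpha(b_1b_2^{-1}),[b_1,b_2]\big)\varphi\Big)^{\sim}\begin{pmatrix}1&1\\0&1\end{pmatrix}dg^1,
$$
where $W_\phi$ is the $\psi$-Whittaker function of the cusp form $\phi$. Granting this, the asserted factorization is immediate: for pure tensors $W_\phi=\prod_v W_{\phi_v}$ under the Whittaker model identification $\Pi\cong\otimes_v\Pi_v$, one has $\omega^{V_\Dcal}=\otimes_v\omega^{V_\Dcal}_v$ and $\varphi^{\sim}=\otimes_v(\varphi_v)^{\sim}$ because the $\psi_v$-self-dual measures multiply to the $\psi$-self-dual measure, and $\U(k_\AA)\backslash\SL_2(k_\AA)$ is the restricted product of the $\U(k_v)\backslash\SL_2(k_v)$ with the product of the local Tamagawa measures; collecting the local factors gives $\Wcal_{\phi,\varphi}=\prod_v\Wcal_{\phi,\varphi,v}$.

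To prove the basic identity I would first insert the definition of $\theta^\Dcal$ and use Fubini to move the double period over $(k\backslash k_\AA)^2$ inside the integral over $\SL_2(k)\backslash\SL_2(k_\AA)$; this is legitimate by the rapid decay of the cusp form $\phi$, the compactness of $(k\backslash k_\AA)^2$, and the Schwartz property of $\varphi$. Writing $n(u):=\begin{pmatrix}1&u\\0&1\end{pmatrix}$, the reduced norm of $(n(u_1)b_1)(n(u_2)b_2)^{-1}$ equals that of $b_1b_2^{-1}$, so $\alpha\big((n(u_1)b_1)(n(u_2)b_2)^{-1}\big)=\alpha(b_1b_2^{-1})$ and the factor $\phi\big(g^1\alpha(b_1b_2^{-1})\big)$ is independent of $u_1,u_2$ and can be pulled out of the double period. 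Expanding the theta kernel as $\sum_{x\in V_\Dcal(k)}$ and applying the formula for the $\GO(V_\Dcal)$-action in the Weil representation (Section~\ref{sec 2.1}), the inner double period acquires the shape
$$
|\Nr_{\Dcal/k}(b_1b_2^{-1})|_\AA^{-1/2}\int_{(k\backslash k_\AA)^2}\sum_{x\in\Mat_2(k)}\Phi_{g^1}\big(b_1^{-1}n(-u_1)\,x\,n(u_2)b_2\big)\,\psi(u_2-u_1)\,du_1\,du_2,
$$
with $\Phi_{g^1}=\omega^{V_\Dcal}(g')\varphi\in S(\Mat_2(k_\AA))$ for a modified element $g'\in\SL_2(k_\AA)$ depending only on $g^1$ and $b_1b_2^{-1}$.

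The core step is the unfolding of this expression against $\int_{\SL_2(k)\backslash\SL_2(k_\AA)}$. Decompose $\Mat_2(k)$ into its three orbits under left--right multiplication: the zero matrix, the rank-one matrices, and the open orbit $\GL_2(k)$. The non-open orbits drop out: the $x=0$ term is constant in $(u_1,u_2)$, hence annihilated by the period against the nontrivial character $\psi(u_2-u_1)$, while the rank-one contribution, once the $\SL_2(k)\backslash\SL_2(k_\AA)$-integral is performed, reduces to a period of $\phi$ involving a constant term along a unipotent radical and so vanishes by cuspidality. On the open orbit one uses that $\GL_2(k)$ is the orbit of the identity with stabilizer the diagonal $\GL_2(k)$; combining the sum over this orbit with $\int_{\SL_2(k)\backslash\SL_2(k_\AA)}$ and with the Whittaker expansion $\phi(g)=\sum_{\gamma\in k^\times}W_\phi\big(\begin{pmatrix}\gamma&0\\0&1\end{pmatrix}g\big)$ collapses everything to an integral over $\U(k_\AA)\backslash\SL_2(k_\AA)$ of $W_\phi\big(g^1\alpha(b_1b_2^{-1})\big)$ against the Weil-representation vector, where one of the two $k\backslash k_\AA$-integrals becomes the partial Fourier transform $\varphi\mapsto\varphi^{\sim}$ in the $(1,2)$-entry and pins the evaluation point to $n(1)=\begin{pmatrix}1&1\\0&1\end{pmatrix}$. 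This yields the basic identity.

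I expect the main obstacle to be the bookkeeping in this unfolding: tracking precisely which portion of the sum over $\Mat_2(k)$ is absorbed by the $\SL_2(k)$-quotient, which by the $\U(k)\backslash\U(k_\AA)$-period producing $W_\phi$, and which by the $\varphi\mapsto\varphi^{\sim}$ transform, and checking that the measure normalizations and change-of-variable Jacobians fit together — along with the clean verification that the non-generic orbits contribute nothing. Once those reindexing bijections are fixed, the remaining manipulations (changes of variable in $u_1,u_2$, orthogonality of $\psi$ over $k\backslash k_\AA$, and the factorization over places) are routine.
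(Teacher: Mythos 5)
Your target identity (the ``basic identity'' expressing $\Wcal_{\phi,\varphi}(b_1,b_2)$ as an integral over $\U(k_\AA)\backslash\SL_2(k_\AA)$ of $W_\phi$ against $(\omega^{V_\Dcal}(\cdot)\varphi)^\sim$ evaluated at $\begin{pmatrix}1&1\\0&1\end{pmatrix}$) is exactly what the paper proves, and your final factorization step is the same as the paper's. The gap is in the unfolding. The only geometric action you actually have at your disposal is that of the orthogonal-side unipotents $\U(k)\times\U(k)$ on $\Mat_2(k)$ coming from the $(u_1,u_2)$-period; the decomposition of $\Mat_2(k)$ into the three $\GL_2(k)\times\GL_2(k)$-orbits (zero, rank one, invertible) is not adapted to it. If you unfold the period along $\U(k)\times\U(k)$-orbits you find that much more than ``the open orbit'' survives pointwise: besides the scalar-matrix orbits (which carry the main term), the orbits with representatives $\lambda\begin{pmatrix}0&0\\1&0\end{pmatrix}$ (rank one) and $\begin{pmatrix}0&b\\c&0\end{pmatrix}$ with $bc\neq 0$ (invertible) are not killed by the character. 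More seriously, the step ``combine the sum over the open orbit with $\int_{\SL_2(k)\backslash\SL_2(k_\AA)}$, using that the stabilizer of the identity is the diagonal'' has no mechanism behind it: $\SL_2(k)$ acts on the theta kernel through the Weil representation, which does not permute the points $x\in\Mat_2(k)$, so there is no orbit-by-orbit unfolding available on the symplectic side. For the same reason the appeal to cuspidality for the leftover rank-one terms is unsupported as stated: these orbital pieces are not $\SL_2(k)$-invariant functions of $g^1$, so one cannot speak of their ``constant term'' or of orthogonality to cusp forms without first regrouping them, and doing so honestly requires precisely the tool you treat as a byproduct.

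That tool is the paper's actual key step: write $V_\Dcal=V_1\oplus V_2$ with $V_1$ the diagonal and $V_2$ the antidiagonal matrices, and apply Poisson summation in the $(1,2)$-entry (this is where $\varphi^\sim$ enters, \emph{before} any unfolding). In this mixed model one has $\big(\omega^{V_2}(g^1)\varphi_2\big)^\sim\begin{pmatrix}0&b\\c&0\end{pmatrix}=\varphi_2^\sim\begin{pmatrix}0&b'\\c'&0\end{pmatrix}$ with $(c',b')=(c,b)g^1$, i.e.\ the $\SL_2$-action on the antidiagonal coordinates becomes the linear right action on row vectors. Hence the theta kernel splits into a degenerate term (indexed by $(c,b)=(0,0)$) plus a sum over $\gamma\in\U(k)\backslash\SL_2(k)$ evaluated at $\begin{pmatrix}a&1\\0&d\end{pmatrix}$; the $(u_1,u_2)$-period with character $\psi(u_2-u_1)$ then kills the degenerate term and forces $a=d=1$, after which $\int_{\SL_2(k)\backslash\SL_2(k_\AA)}$ unfolds to $\U(k)\backslash\SL_2(k_\AA)$ and the remaining $\U(k)\backslash\U(k_\AA)$-integration folds $\phi$ into $W_\phi$. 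Note that in this route no vanishing-by-cuspidality of degenerate orbits is needed at all. So your proposal is missing the essential idea (the passage to the mixed model via Poisson summation as the enabling step of the unfolding), and the orbit analysis you substitute for it does not go through as written.
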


\begin{proof}
Let $V_1:=\left\{\begin{pmatrix} * & 0 \\ 0&*\end{pmatrix}\right\}\subset V_\Dcal$, $V_2 := \left\{\begin{pmatrix}0&*\\ *&0\end{pmatrix}\right\} \subset V_\Dcal$, and $Q_i:= Q_\Dcal\big|_{V_i}$ for $i = 1,2$.
Then $(V_\Dcal,Q_\Dcal) = (V_1,Q_1)\oplus (V_2,Q_2)$.
For $\varphi_2 \in S(V_2(k_\AA))$ and $g^1 \in \SL_2(k_\AA)$, it is observed that
$$\big(\omega^{V_2}(g^1)\varphi_2 \big)^\sim \begin{pmatrix} 0 & b \\ c&0 \end{pmatrix} = \varphi_2^\sim\begin{pmatrix} 0 & b' \\ c' &0 \end{pmatrix},$$
where $(c',b') = (c,b) \cdot g^1$.
Thus for $\varphi \in S(V_\Dcal(k_\AA))$, by Poisson summation formula we may write 
$$\theta^{V_\Dcal} (g^1\alpha(b_1b_2^{-1}),[b_1,b_2];\varphi) = \theta^{V_\Dcal}_1(g^1\alpha(b_1b_2^{-1}),[b_1,b_2];\varphi) +
\theta^{V_\Dcal} _2(g^1\alpha(b_1b_2^{-1}),[b_1,b_2];\varphi),$$
where
$$\theta_1^{V_\Dcal} (g^1\alpha(b_1b_2^{-1}),[b_1,b_2];\varphi) := \sum_{\gamma \in \U(k)\backslash \SL_2 (k)} \sum_{a,d \in k} 
\big(\omega^\Dcal(\gamma g^1\alpha(b_1b_2^{-1}),[b_1,b_2])\varphi\big)^\sim\begin{pmatrix} a&1\\0&d\end{pmatrix},$$
and
$$\theta_2^{V_\Dcal} (g^1\alpha(b_1b_2^{-1}),[b_1,b_2];\varphi) :=
\sum_{a,d \in k} 
\big(\omega^\Dcal(g^1\alpha(b_1b_2^{-1}),[b_1,b_2])\varphi\big)^\sim\begin{pmatrix} a&0\\0&d\end{pmatrix}.$$
Since for $u_1,u_2 \in k_\AA$ one has
$$\Big(\omega^\Dcal(1,\left[\begin{pmatrix}1&u_1\\0&1\end{pmatrix},\begin{pmatrix}1&u_2\\0&1\end{pmatrix}\right])\varphi\Big)^\sim\begin{pmatrix}a&b\\c&d\end{pmatrix}
= \psi\big(b(-au_2+du_1+cu_1u_2)\big) \cdot \varphi^\sim\begin{pmatrix}a&b\\c&d\end{pmatrix},$$
we get
\begin{eqnarray}
&&\int_{k\backslash k_\AA} \int_{k\backslash k_\AA} \theta^{V_\Dcal} \left(g^1\alpha(b_1b_2^{-1}), \left[\begin{pmatrix} 1 & u_1 \\ 0 & 1 \end{pmatrix} b_1, 
\begin{pmatrix} 1 & u_2 \\ 0 & 1 \end{pmatrix} b_2\right]; \varphi\right) \psi(u_2-u_1) du_1du_2 \nonumber \\
&=&
\sum_{\gamma \in \U(k)\backslash \SL_2 (k)}
\big(\omega^\Dcal(\gamma g^1\alpha(b_1b_2^{-1}),[b_1,b_2])\varphi\big)^\sim\begin{pmatrix} 1&1\\0&1\end{pmatrix}. \nonumber
\end{eqnarray}
Therefore
\begin{eqnarray}
\Wcal_{\phi,\varphi}(b_1,b_2) &=& 
\int_{\U(k)\backslash \SL_2(k_\AA)} \phi(g^1 \alpha(b_1b_2^{-1})) \cdot \big(\omega^\Dcal(\gamma g^1\alpha(b_1b_2^{-1}),[b_1,b_2])\varphi\big)^\sim\begin{pmatrix} 1&1\\0&1\end{pmatrix} dg^1 \nonumber \\
&=& \int_{\U(k_\AA) \backslash \SL_2(k_\AA)} W_\phi(g^1\alpha(b_1b_2^{-1}))
\cdot \big(\omega^\Dcal(\gamma g^1\alpha(b_1b_2^{-1}),[b_1,b_2])\varphi\big)^\sim\begin{pmatrix} 1&1\\0&1\end{pmatrix} dg^1 \nonumber \\
&=& \prod_v \Wcal_{\phi,\varphi,v}(b_{1,v},b_{2,v}).\nonumber
\end{eqnarray}
\end{proof}

The space consisting of all the $\Wcal_{f,\varphi,v}$ is actually the Whittaker model of $\Pi_v \otimes \widetilde{\Pi}_v$. Moreover,
the following straightforward lemma connects the local Whittaker function $\Wcal_{\phi,\varphi,v}$ with $\theta_v^{\Dcal,o}(\cdot,\cdot;\phi_v,\varphi_v)$:

\begin{lem}\label{lem A.6}
For $b_1,b_2 \in \GL_2(k_v)$, one gets
$$\frac{\zeta_v(2)}{L_v(1,\Pi,\text{\rm Ad})} \cdot \int_{k_v^\times} \Wcal_{f,\varphi,v}\left(\begin{pmatrix} a_v&0\\ 0 &1\end{pmatrix} b_1,\begin{pmatrix} a_v&0\\ 0 &1\end{pmatrix} b_2\right) d^\times a_v = \zeta_v(1) \cdot \theta_v^{\Dcal,o}(b_1,b_2; \phi_v,\varphi_v).$$
\end{lem}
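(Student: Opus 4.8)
The plan is to unwind both sides into explicit integrals over $\SL_2(k_v)$ using the Iwasawa decomposition and to match the resulting measures. First I would recall that, by Proposition~\ref{prop A.5},
$$\Wcal_{\phi,\varphi,v}\left(\begin{pmatrix} a_v & 0 \\ 0 & 1\end{pmatrix} b_1, \begin{pmatrix} a_v & 0 \\ 0 & 1\end{pmatrix} b_2\right) = \int_{\U(k_v)\backslash\SL_2(k_v)} W_{\phi_v}\big(g^1 \alpha(b_1 b_2^{-1})\big) \cdot \Big(\omega_v^\Dcal(g^1 \alpha(b_1 b_2^{-1}), [b_1,b_2])\varphi_v\Big)^\sim\begin{pmatrix}1&1\\0&1\end{pmatrix} dg^1,$$
since multiplying $b_1$ and $b_2$ on the left by the \emph{same} diagonal matrix $\operatorname{diag}(a_v,1)$ leaves $b_1 b_2^{-1}$ unchanged and acts on the pair $[b_1,b_2] \in \GO(V_\Dcal)(k_v)$ trivially (the similitude action factors through $b_1 b_2^{-1}$ up to the center). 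Actually the cleaner route is to keep the $a_v$-dependence: conjugating $g^1$ by $\operatorname{diag}(a_v,1)$ absorbs the extra diagonal factor into the integration variable, so that after the change of variables $g^1 \mapsto \operatorname{diag}(a_v,1)^{-1} g^1 \operatorname{diag}(a_v,1)$ (which rescales the measure) the $d^\times a_v$-integral on the left becomes an integral over the full diagonal torus $\T_1(k_v)$ of $\SL_2$ sitting inside the $\U(k_v)\backslash\SL_2(k_v)$ coset space.

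Next I would use the Iwasawa decomposition $\SL_2(k_v) = \U(k_v) \cdot \T^1(k_v) \cdot \SL_2(O_v)$ to write the Haar measure $dg^1$ on $\U(k_v)\backslash\SL_2(k_v)$ as (a constant times) $d^\times t_v \, d\kappa_v^1$ where $t_v$ runs over $k_v^\times$ and $\kappa_v^1$ over $\SL_2(O_v)$; the constant is pinned down by the normalization of Tamagawa measures in Section~\ref{sec 1.2}, and it is exactly where the factor $\zeta_v(1)$ on the right-hand side originates (the ratio between the measure on $\U(k_v)\backslash\SL_2(k_v)$ obtained from $dg^1$ and the product measure $d^\times t_v\, d\kappa_v^1$). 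After the reduction of the previous paragraph, the left-hand side $\frac{\zeta_v(2)}{L_v(1,\Pi,\mathrm{Ad})}\int_{k_v^\times}(\cdots)\,d^\times a_v$ becomes precisely $\frac{\zeta_v(2)}{L_v(1,\Pi,\mathrm{Ad})}$ times an integral over $\U(k_v)\backslash\SL_2(k_v)$ against $dg^1$, which is the defining integral of $\theta_v^{\Dcal,o}(b_1,b_2;\phi_v,\varphi_v)$ up to the bookkeeping constant $\zeta_v(1)$.

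The one genuine check is the identity
$$\Big(\omega_v^\Dcal(g^1,\,\cdot\,)\varphi_v\Big)^\sim\begin{pmatrix}1&1\\0&1\end{pmatrix}\Big|_{g^1 = \mathrm{diag}(a_v,1)^{-1}\tilde g^1\,\mathrm{diag}(a_v,1)} \;=\; \big(\omega_v^\Dcal(\tilde g^1,\,\cdot\,)\varphi_v\big)(1) \cdot (\text{explicit }a_v\text{-factor}),$$
i.e.\ that under the partial Fourier transform $(\,\cdot\,)^\sim$ in the second ($V_2$, off-diagonal) variable, evaluation at $\left(\begin{smallmatrix}1&1\\0&1\end{smallmatrix}\right)$ after twisting by $\operatorname{diag}(a_v,1)$ reproduces evaluation of $\omega_v^\Dcal$ at the identity element $1 \in \Dcal_v$, with the Weil-representation action of $\operatorname{diag}(a_v,a_v^{-1})$ accounting for the difference; this is the transformation rule already used implicitly in Proposition~\ref{prop 3.2} and Proposition~\ref{prop 4.1}. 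The main obstacle, such as it is, is purely the careful tracking of these normalizing constants ($\zeta_v(1)$, $\zeta_v(2)$, $L_v(1,\Pi,\mathrm{Ad})$, and the self-dual measure factors $q_v^{-\delta_v/2}$) so that they cancel correctly on the nose; there is no conceptual difficulty, which is why the lemma is labelled ``straightforward.'' Once the measure identification and the Weil-representation twist are in place, the equality follows by comparing the two sides term by term.
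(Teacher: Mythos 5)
Your opening reduction is already off: left-multiplying both $b_1$ and $b_2$ by $d_{a}=\left(\begin{smallmatrix}a_v&0\\0&1\end{smallmatrix}\right)$ does leave $\alpha(b_1b_2^{-1})$ unchanged (the reduced norm is conjugation-invariant), but it does \emph{not} act trivially on $[b_1,b_2]$: one has $[d_ab_1,d_ab_2]=[d_a,d_a]\circ[b_1,b_2]$, and $\omega_v^\Dcal(1,[d_a,d_a])\varphi_v(x)=\varphi_v(d_a^{-1}xd_a)$, which rescales the upper-right entry of $x$ by $a_v^{-1}$ and the lower-left by $a_v$. If that action were trivial, the left-hand side of the lemma would be $\Wcal_{\phi,\varphi,v}(b_1,b_2)$ times the infinite volume of $k_v^\times$, so the entire content of the statement lives in this $a_v$-dependence. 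Your fallback route then misplaces the mechanism: the factor $\zeta_v(1)$ is not an Iwasawa-measure constant (both sides carry the same integral over $\U(k_v)\backslash\SL_2(k_v)$ against the same $dg^1$, so any such constant would cancel), and the ``one genuine check'' you formulate --- that the $\sim$-evaluation at $\left(\begin{smallmatrix}1&1\\0&1\end{smallmatrix}\right)$ equals plain evaluation at $1\in\Dcal_v$ times an explicit $a_v$-factor --- is false pointwise in $a_v$; no such identity holds before integrating, and it is not what is used in Propositions~\ref{prop 3.2} or \ref{prop 4.1}, where no partial Fourier transform occurs.

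What actually happens is that the extra $d^\times a_v$-integral is consumed by Fourier inversion, not by enlarging the $\SL_2$-domain. Writing $\Phi_g:=\omega_v^\Dcal\big(g^1\alpha(b_1b_2^{-1}),[b_1,b_2]\big)\varphi_v$, the conjugation above gives
$$\Big(\omega_v^\Dcal\big(g^1\alpha(b_1b_2^{-1}),[d_ab_1,d_ab_2]\big)\varphi_v\Big)^\sim\begin{pmatrix}1&1\\0&1\end{pmatrix}=|a_v|_v\cdot\Phi_g^\sim\begin{pmatrix}1&a_v\\0&1\end{pmatrix},$$
and since $d^\times a_v=\zeta_v(1)\,da_v/|a_v|_v$ with $da_v$ self-dual for $\psi_v$, one gets
$$\int_{k_v^\times}|a_v|_v\,\Phi_g^\sim\begin{pmatrix}1&a_v\\0&1\end{pmatrix}d^\times a_v=\zeta_v(1)\int_{k_v}\Phi_g^\sim\begin{pmatrix}1&a_v\\0&1\end{pmatrix}da_v=\zeta_v(1)\cdot\Phi_g\begin{pmatrix}1&0\\0&1\end{pmatrix}=\zeta_v(1)\,\Phi_g(1),$$
the middle equality being Fourier inversion in the upper-right coordinate evaluated at $0$. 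Plugging this into the definition of $\Wcal_{\phi,\varphi,v}$ and comparing with the defining integral of $\theta_v^{\Dcal,o}(b_1,b_2;\phi_v,\varphi_v)$ yields the lemma at once; this inversion step is simultaneously the source of the factor $\zeta_v(1)$ and of the passage from the partial Fourier transform back to evaluation at $1\in\Dcal_v$, and it is the piece your proposal is missing.
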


Recall that for pure tensors $f_1,f_2 \in \Pi^\Dcal = \Pi$, we have
$$\langle f_1,f_2\rangle_{\text{Pet}}^{\Mat_2} = \frac{2 \cdot L(1,\Pi,\text{Ad})}{\zeta_k(2)} \cdot \langle f_{1,v},\bar{f}_{2,v}\rangle_v^{\Mat_2},$$
where
$$\langle f_{1,v},\bar{f}_{2,v}\rangle_v^{\Mat_2}:= \frac{\zeta_v(2)}{\zeta_v(1)L_v(1,\Pi,\text{Ad})} \cdot \int_{k_v^\times} W_{f_{1,v}}\begin{pmatrix}a_v&0\\0&1\end{pmatrix} \overline{W_{f_{2,v}}\begin{pmatrix}a_v&0\\0&1\end{pmatrix}} d^\times a_v,$$
and $\langle f_{1,v},\bar{f}_{2,v}\rangle_v = 1$ when $v$ is \lq\lq good.\rq\rq\
Therefore by Proposition~\ref{prop A.5} and Lemma~\ref{lem A.6}, the Shimizu correspondence in Theorem~\ref{thm 2.2} implies that:

\begin{prop} \label{prop A.7}
\text{\rm Theorem~\ref{thm A.2}} holds when $\Dcal = \Mat_2$.
\end{prop}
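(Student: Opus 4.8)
The plan is to recognize both sides of the asserted identity as the two standard avatars — the matrix-coefficient avatar and the Whittaker avatar — of a single vector in $\Pi\otimes\widetilde\Pi$: the Shimizu correspondence produces that vector, Proposition~\ref{prop A.5} shows it is factorizable, and the Rankin--Selberg expression for the Petersson pairing (recalled just above the statement) together with Lemma~\ref{lem A.6} matches the normalizations place by place.

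Concretely, fix pure tensors $\phi=\otimes_v\phi_v\in\Pi$ and $\varphi=\otimes_v\varphi_v\in S(V_\Dcal(k_\AA))$. By Theorem~\ref{thm 2.2} we may write $\theta^\Dcal(\cdot,\cdot;\phi,\varphi)=\sum_i f_{1,i}\otimes\bar f_{2,i}$ as a finite sum of products of cusp forms in $\Pi=\Pi^\Dcal$. Taking the Fourier coefficient along $\U\times\U$ against the character $(u_1,u_2)\mapsto\psi(u_2-u_1)$ identifies $\Wcal_{\phi,\varphi}$ with $\sum_i W_{f_{1,i}}\otimes\overline{W_{f_{2,i}}}$, which is precisely the image of $F:=\sum_i f_{1,i}\otimes\bar f_{2,i}\in\Pi\otimes\widetilde\Pi$ in the Whittaker model of $\Pi\otimes\widetilde\Pi$. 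Since $\phi$ and $\varphi$ are pure tensors, Proposition~\ref{prop A.5} gives $\Wcal_{\phi,\varphi}=\prod_v\Wcal_{\phi,\varphi,v}$, so by uniqueness of the Whittaker model of $\Pi\otimes\widetilde\Pi=\otimes_v(\Pi_v\otimes\widetilde\Pi_v)$ the vector $F$ is a pure tensor $\otimes_v F_v$, where $F_v\in\Pi_v\otimes\widetilde\Pi_v$ has Whittaker function $\Wcal_{\phi,\varphi,v}$.

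Next I would evaluate the left-hand side directly. Substituting $b\mapsto bb_1^{-1}$ in each summand and using the $\GL_2(k_\AA)$-invariance of the Petersson pairing,
$$\int_{\Dcal^\times k_\AA^\times\backslash\Dcal_\AA^\times}\theta^\Dcal(bb_1,bb_2;\phi,\varphi)\,d^\times b=\sum_i\langle\Pi(b_1)f_{1,i},\Pi(b_2)f_{2,i}\rangle_{\text{Pet}}^{\Mat_2}=m_F(b_1,b_2),$$
the matrix coefficient of the pure tensor $F$. Applying the recalled Rankin--Selberg factorization of the Petersson pairing to $F$ expresses $m_F(b_1,b_2)$ as $\tfrac{2L(1,\Pi,\text{Ad})}{\zeta_k(2)}\prod_v\langle\cdot,\cdot\rangle_v^{\Mat_2}$, and, since $\Wcal_{\phi,\varphi,v}$ is the Whittaker function of $F_v$, each local factor equals $\tfrac{\zeta_v(2)}{\zeta_v(1)L_v(1,\Pi,\text{Ad})}\int_{k_v^\times}\Wcal_{\phi,\varphi,v}\big(\mathrm{diag}(a_v,1)b_{1,v},\mathrm{diag}(a_v,1)b_{2,v}\big)\,d^\times a_v$. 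By Lemma~\ref{lem A.6} this local integral times $\tfrac{\zeta_v(2)}{\zeta_v(1)L_v(1,\Pi,\text{Ad})}$ is exactly $\theta_v^{\Dcal,o}(b_{1,v},b_{2,v};\phi_v,\varphi_v)$, and multiplying over $v$ gives the asserted formula.

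The calculation is essentially bookkeeping: one must keep track of which additive character each Whittaker function is taken against, and watch the cancellation of the local $\zeta_v(1)$, $\zeta_v(2)$ and $L_v(1,\Pi,\text{Ad})$ factors against those built into the definitions of $\langle\cdot,\cdot\rangle_v^{\Mat_2}$ and $\theta_v^{\Dcal,o}$, the global constant $\tfrac{2L(1,\Pi,\text{Ad})}{\zeta_k(2)}$ being supplied in one stroke by the Rankin--Selberg formula. The only non-mechanical point is the passage from ``$\Wcal_{\phi,\varphi}$ factorizes'' to ``$F$ is a pure tensor'', which rests on the uniqueness of Whittaker models; the genuinely substantive inputs — Proposition~\ref{prop A.5} and Lemma~\ref{lem A.6} — are already established, so no real obstacle remains, and the proof is assembly.
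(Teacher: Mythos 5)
Your proposal is correct and takes essentially the same route as the paper: its proof of Proposition~\ref{prop A.7} is exactly the assembly you describe, namely the Shimizu correspondence (Theorem~\ref{thm 2.2}), the Whittaker factorization of Proposition~\ref{prop A.5}, the factorized Petersson pairing realized by local Whittaker integrals, and Lemma~\ref{lem A.6}. Your explicit appeal to uniqueness of Whittaker models to see that the Shimizu image is a pure tensor merely spells out a step the paper leaves implicit.
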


\subsection{Proof of Theorem~\ref{thm A.2} when $\Dcal$ is division}\label{sec A.2}

Given $g^1 \in \SL_2(k_\AA)$,
we set
$$I^\Dcal(g^1,s,\varphi):= \sum_{\gamma \in \B^1(k)\backslash \SL_2(k)} |a(\gamma g^1)|_{\AA}^{s-1} \cdot \sum_{x \in k} \big(\omega^{\Dcal}(\gamma g^1)\varphi\big) (x), \quad \forall \varphi \in S(V_\Dcal(k_\AA)).$$
Here $a(g^1) = a \in k_\AA^\times$ is chosen so that $g^1$ can be written as
$$g^1 = \begin{pmatrix} a & * \\ 0 & a^{-1} \end{pmatrix} \kappa^1 \quad \text{ with } \kappa^1 \in \SL_2(O_{\AA}).$$
This series converges absolutely when $\re(s) > 3/2$, and has meromorphic continuation to the whole complex $s$-plane. 
Given pure tensors $\phi = \otimes_v \phi_v \in \Pi$ and $\varphi = \otimes_v \varphi_v \in S(V_\Dcal(k_\AA))$, one has
\begin{eqnarray}
J^\Dcal(s;\phi,\varphi)&:=& \int_{\SL_2(k)\backslash \SL_2(k_\AA)} \phi(g^1) I^\Dcal(g^1,s,\varphi) dg^1 \nonumber \\
&=& \int_{\U(k) \backslash \SL_2(k_\AA)} \phi(g^1) |a(g^1)|_{\AA}^{s-1} \cdot 
\big(\omega^{\Dcal}(g^1)\varphi\big) (1) d g^1 \nonumber \\
&=& \prod_v J_v^\Dcal(s;\phi_v,\varphi_v),\nonumber
\end{eqnarray}
where
$$J_v^\Dcal(s;\phi_v,\varphi_v):= \int_{\U(k_v)\backslash \SL_2(k_v)}W_{\phi_v}(g^1) \big(\omega_v^\Dcal(g^1)\varphi_v\big)(1)|a(g^1)|_v^{s-1} d g^1.$$
It is clear that:

\begin{lem}\label{lem A.8}
The local integral $J_v^\Dcal(s;\phi_v,\varphi_v)$ always converges when $\re(s) \geq 1$. Moreover, when $v$ is \lq\lq good,\rq\rq\ one has
$$J_v^\Dcal(s;\phi_v,\varphi_v) = \frac{L_v(s, \Pi, \text{\rm Ad})}{\zeta_v(2s)}.$$
In particular, we obtain that
$$J^\Dcal(s;\phi,\varphi) = \frac{L(s,\Pi,\text{\rm Ad})}{\zeta_k(2s)} \cdot \prod_v J_v^{\Dcal,o}(s;\phi_v,\varphi_v),$$
where
$$J_v^{\Dcal,o}(s;\phi_v,\varphi_v) := \frac{\zeta_v(2s)}{L_v(s, \Pi, \text{\rm Ad})} \cdot J_v^\Dcal(s;\phi_v,\varphi_v).$$
\end{lem}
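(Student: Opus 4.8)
The plan is to prove the three assertions of the lemma in turn: local convergence for $\re(s)\geq 1$, the value of $J_v^\Dcal(s)$ at a good place, and the global Euler product.

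\textbf{Local convergence.} I would first insert the Iwasawa decomposition $\SL_2(k_v)=\U(k_v)\,A(k_v)\,\SL_2(O_v)$, with $A(k_v)=\{\mathrm{diag}(a,a^{-1})\}$, so that $J_v^\Dcal(s;\phi_v,\varphi_v)$ becomes an integral over $A(k_v)\times\SL_2(O_v)$ against a multiple of $d^\times a\,d\kappa$ by the modulus of the Borel; since the $\SL_2(O_v)$-variable ranges over a compact set, convergence is governed by the behaviour in $a$. On the Weil side, $\bigl(\omega_v^\Dcal(\mathrm{diag}(a,a^{-1})\kappa)\varphi_v\bigr)(1)=|a|_v^{2}\bigl(\omega_v^\Dcal(\kappa)\varphi_v\bigr)(a)$ — here $\dim_kV_\Dcal=4$ and the norm form of a quaternion algebra has trivial discriminant, so no quadratic character intervenes — which is bounded as $|a|_v\to 0$ and vanishes for $|a|_v$ large, uniformly in $\kappa$. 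On the Whittaker side, $W_{\phi_v}(\mathrm{diag}(b,1)\kappa)$ vanishes for $|b|_v$ large (by smoothness of $W_{\phi_v}$ together with its transformation law under $\U(k_v)$), while as $|b|_v\to 0$ the standard gauge estimate for generic representations with unitary central character gives $\bigl|W_{\phi_v}(\mathrm{diag}(b,1)\kappa)\bigr|\ll|b|_v^{1/2-\sigma}\bigl(1+|\ord_v b|\bigr)^{N}$ for some $\sigma<1/2$ (in fact $\sigma=0$, by the Ramanujan property over function fields). Setting $b=a^{2}$ and collecting exponents, the integrand on $|a|_v\leq 1$ is dominated by a constant times $|a|_v^{\re(s)-2\sigma}\bigl(1+|\ord_v a|\bigr)^{N}$, whose integral converges once $\re(s)>2\sigma$; as $2\sigma<1$ this covers $\re(s)\geq 1$.

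\textbf{The unramified place.} At a good place one has $\Dcal_v=\Mat_2(k_v)$, $\psi_v$ unramified, $\Pi_v$ an unramified principal series with Satake parameters $\alpha,\beta$, $\phi_v$ the spherical vector normalized by $W_{\phi_v}(1)=1$, and $\varphi_v=\mathbf 1_{\Mat_2(O_v)}$. Since $\Mat_2(O_v)$ is self-dual for the reduced-trace pairing and $\psi_v$ is unramified, $\omega_v^\Dcal(\kappa)\varphi_v=\varphi_v$ for $\kappa\in\SL_2(O_v)$, so the $\SL_2(O_v)$-integral collapses and $\bigl(\omega_v^\Dcal(\mathrm{diag}(\varpi_v^m,\varpi_v^{-m}))\varphi_v\bigr)(1)=q_v^{-2m}\mathbf 1_{m\geq 0}$. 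The factors $q_v^{\mp 2m}$ from this and from the modulus in the Iwasawa measure cancel, reducing $J_v^\Dcal(s)$ (up to the proportionality constant coming from the Tamagawa normalization of $dg_v^1$) to $\sum_{m\geq 0}W_{\phi_v}\bigl(\mathrm{diag}(\varpi_v^m,\varpi_v^{-m})\bigr)q_v^{-m(s-1)}$. Feeding in the Casselman--Shalika value $W_{\phi_v}\bigl(\mathrm{diag}(\varpi_v^m,\varpi_v^{-m})\bigr)=(\alpha\beta)^{-m}q_v^{-m}(\alpha^{2m+1}-\beta^{2m+1})/(\alpha-\beta)$ and summing the resulting two geometric series gives $(1+q_v^{-s})/\bigl((1-(\alpha/\beta)q_v^{-s})(1-(\beta/\alpha)q_v^{-s})\bigr)$, which, since $1-q_v^{-2s}=(1-q_v^{-s})(1+q_v^{-s})$, equals $L_v(s,\Pi,\text{Ad})/\zeta_v(2s)$. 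The one delicate point is to track the measure normalizations of Section~\ref{sec 1.2} through the Iwasawa decomposition so that this proportionality constant is exactly $1$; I expect this bookkeeping, rather than any conceptual difficulty, to be the main obstacle.

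\textbf{Euler product.} For pure tensors $\phi,\varphi$ all but finitely many places are good — the exceptions being the ramification of $\psi$, of $\Pi$ and of $\Dcal$, together with the finitely many $v$ where $\varphi_v\neq\mathbf 1_{\Mat_2(O_v)}$ or $\phi_v$ is not spherical — so $J_v^{\Dcal,o}(s)=1$ for all but finitely many $v$ by the previous step and $\prod_vJ_v^{\Dcal,o}(s)$ is a finite product. Combining this with the factorization $J^\Dcal(s;\phi,\varphi)=\prod_vJ_v^\Dcal(s;\phi_v,\varphi_v)$ established just before the lemma and pulling out $\prod_v L_v(s,\Pi,\text{Ad})/\zeta_v(2s)=L(s,\Pi,\text{Ad})/\zeta_k(2s)$ — an Euler-product identity valid for $\re(s)$ large and propagated by meromorphic continuation — yields the displayed formula.
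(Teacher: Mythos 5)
Your proposal is correct and follows exactly the route the paper treats as routine (the lemma is stated with no proof beyond ``It is clear that''): Iwasawa decomposition together with the standard Whittaker asymptotics and unitarity bound $\sigma<1/2$ for the convergence claim, and at good places the collapse of the $\SL_2(O_v)$-integral plus the Casselman--Shalika/Shintani values summed against $q_v^{-ms}$, giving $\bigl(1+q_v^{-s}\bigr)/\bigl((1-\alpha\beta^{-1}q_v^{-s})(1-\beta\alpha^{-1}q_v^{-s})\bigr)=L_v(s,\Pi,\mathrm{Ad})/\zeta_v(2s)$, followed by the finite-product/meromorphic-continuation argument for the Euler product. The one loose end you flag, the measure bookkeeping, is settled by the same implicit normalization the paper uses in its earlier good-place computation of $\theta_v^{\Dcal,o}$ (volumes of $\SL_2(O_v)$ and $O_v^\times$ equal to $1$ when $\delta_v=0$, quotient measure $\delta^{-1}(a)\,d^\times a\,d\kappa$ on $\U(k_v)\backslash\SL_2(k_v)$), under which the $|a|_v^{\pm 2}$ factors cancel and the proportionality constant is exactly $1$.
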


For $b_1,b_2 \in \Dcal_v^\times$, one has
$$\theta_v^{\Dcal,o}(b_1,b_2; \phi_v,\varphi_v) = J_v^{\Dcal,o}(1; \phi_v', \varphi_v'),$$
where $\phi_v':= \Pi_v(\alpha(b_1b_2^{-1}))\phi_v$ and $\varphi_v' := \omega_v^\Dcal(\alpha(b_1b_2^{-1}), [b_1,b_2])\varphi_v$.
Thus we obtain that:

\begin{prop}\label{prop A.9}
{\rm Theorem~\ref{thm A.2}} holds when $\Dcal$ is division.
\end{prop}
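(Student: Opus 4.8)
The plan is to deduce Theorem~\ref{thm A.2} (for $\Dcal$ division) by combining the factorization of $J^\Dcal$ in Lemma~\ref{lem A.8}, the pointwise identity $\theta_v^{\Dcal,o}(b_1,b_2;\phi_v,\varphi_v)=J_v^{\Dcal,o}(1;\phi_v',\varphi_v')$ just recorded (with $\phi_v':=\Pi_v(\alpha(b_1b_2^{-1}))\phi_v$ and $\varphi_v':=\omega_v^\Dcal(\alpha(b_1b_2^{-1}),[b_1,b_2])\varphi_v$), and a global Siegel--Weil computation. Write $\phi':=\Pi(\alpha(b_1b_2^{-1}))\phi=\otimes_v\phi_v'$ and $\varphi':=\omega^\Dcal(\alpha(b_1b_2^{-1}),[b_1,b_2])\varphi=\otimes_v\varphi_v'$. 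The first step is to reduce Theorem~\ref{thm A.2} to the single global identity
\[
\int_{\Dcal^\times k_\AA^\times\backslash\Dcal_\AA^\times}\theta^\Dcal(bb_1,bb_2;\phi,\varphi)\,d^\times b=2\,J^\Dcal(1;\phi',\varphi').
\]
Granting this, Lemma~\ref{lem A.8} at $s=1$ gives $J^\Dcal(1;\phi',\varphi')=\frac{L(1,\Pi,\text{Ad})}{\zeta_k(2)}\prod_v J_v^{\Dcal,o}(1;\phi_v',\varphi_v')$, and the local identity rewrites each factor as $\theta_v^{\Dcal,o}(b_{1,v},b_{2,v};\phi_v,\varphi_v)$, so the left-hand side equals $\frac{2L(1,\Pi,\text{Ad})}{\zeta_k(2)}\prod_v\theta_v^{\Dcal,o}(b_{1,v},b_{2,v};\phi_v,\varphi_v)$, which is exactly the assertion.

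To prove the global identity, first I would unwind $\theta^\Dcal(bb_1,bb_2;\phi,\varphi)$. Since $bb_1(bb_2)^{-1}=b(b_1b_2^{-1})b^{-1}$ has the same reduced norm as $b_1b_2^{-1}$, one has $\alpha(bb_1(bb_2)^{-1})=\alpha(b_1b_2^{-1})=:\alpha$, while $[bb_1,bb_2]=[b,b]\circ[b_1,b_2]$ in $\GO(V_\Dcal)$ ($[b,b]$ being conjugation by $b$). Because $\dim_k V_\Dcal=4$ is even, $\omega^\Dcal$ is a genuine representation of $[\GL_2\times\GO(V_\Dcal)]$ (no metaplectic cover is needed), whence $\omega^\Dcal(g^1\alpha,[b,b]\circ[b_1,b_2])\varphi=\omega^\Dcal(g^1,[b,b])\bigl(\omega^\Dcal(\alpha,[b_1,b_2])\varphi\bigr)=\omega^\Dcal(g^1,[b,b])\varphi'$; combined with $\phi(g^1\alpha)=\phi'(g^1)$ this gives $\theta^\Dcal(bb_1,bb_2;\phi,\varphi)=\int_{\SL_2(k)\backslash\SL_2(k_\AA)}\phi'(g^1)\,\theta^{V_\Dcal}(g^1,[b,b];\varphi')\,dg^1$. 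As $\Dcal$ is division the quotient $\Dcal^\times k_\AA^\times\backslash\Dcal_\AA^\times$ is compact, so Fubini applies (the double integral is absolutely convergent, $\phi'$ being cuspidal), and since by definition $J^\Dcal(1;\phi',\varphi')=\int_{\SL_2(k)\backslash\SL_2(k_\AA)}\phi'(g^1)\,I^\Dcal(g^1,1,\varphi')\,dg^1$, the global identity follows from the pointwise Siegel--Weil statement
\[
\int_{\Dcal^\times k_\AA^\times\backslash\Dcal_\AA^\times}\theta^{V_\Dcal}(g^1,[b,b];\varphi')\,d^\times b=2\,I^\Dcal(g^1,1,\varphi'),\qquad g^1\in\SL_2(k_\AA).
\]

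For this last step I would use the orthogonal decomposition $V_\Dcal=k\cdot 1\perp\Dcal^o$ of $(\Dcal,\Nr_{\Dcal/k})$ into scalars and pure quaternions: conjugation $b\mapsto[b,b]$ fixes $k\cdot 1$ pointwise and induces the standard isomorphism $\Dcal_\AA^\times/k_\AA^\times\cong\SO(\Dcal^o)(k_\AA)$, and the Weil representation of $\SL_2$ factors through this decomposition. Thus for $\varphi'=\varphi_0'\otimes\varphi_1'$ one gets $\theta^{V_\Dcal}(g^1,[b,b];\varphi')=\theta^{k\cdot 1}(g^1;\varphi_0')\cdot\theta^{\Dcal^o}(g^1,[b,b];\varphi_1')$ and, unwinding the definition of $I^\Dcal$ the same way, $I^\Dcal(g^1,s,\varphi')=\theta^{k\cdot 1}(g^1;\varphi_0')\cdot E^{\Dcal^o}(g^1,s,\varphi_1')$, where $E^{\Dcal^o}(g^1,s,\varphi_1')=\sum_{\gamma\in\B^1(k)\backslash\SL_2(k)}|a(\gamma g^1)|_\AA^{s-1}\,(\omega^{\Dcal^o}(\gamma g^1)\varphi_1')(0)$ is the Siegel Eisenstein series on $\widetilde{\SL}_2$ attached to $\varphi_1'$. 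The Siegel--Weil formula for the dual pair $(\widetilde{\SL}_2,\Oo(\Dcal^o))$ proved in Appendix~\ref{sec B}, with the normalizations arranged so that the point in question is $s=1$ and the proportionality constant is $\text{vol}(\Dcal^\times k_\AA^\times\backslash\Dcal_\AA^\times)=2$, then gives $\int_{\Dcal^\times k_\AA^\times\backslash\Dcal_\AA^\times}\theta^{\Dcal^o}(g^1,[b,b];\varphi_1')\,d^\times b=2\,E^{\Dcal^o}(g^1,1,\varphi_1')$; multiplying by $\theta^{k\cdot 1}(g^1;\varphi_0')$ and extending from decomposable $\varphi'$ to all $\varphi'\in S(V_\Dcal(k_\AA))$ by linearity yields the Siegel--Weil statement above. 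The hard part --- everything else being formal manipulations of theta kernels and an application of Fubini on a compact quotient --- is precisely this Siegel--Weil formula for the metaplectic/odd-orthogonal pair $(\widetilde{\SL}_2,\Oo(\Dcal^o))$ of Appendix~\ref{sec B}, together with the bookkeeping of normalizations that forces the constant to be $2$.
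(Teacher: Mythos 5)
Your proposal is correct and follows essentially the same route as the paper: the translation identity reducing to $J^\Dcal(1;\phi',\varphi')$, the factorization of Lemma~\ref{lem A.8}, and the decomposition $V_\Dcal=k\oplus\Dcal^o$ combined with the metaplectic Siegel--Weil formula of Appendix~\ref{sec B} are exactly the ingredients the paper packages into the equality~(\ref{eqn B.1}) of Remark~\ref{rem B.2}. The only cosmetic difference is that you gloss the constant $2$ as the volume of $\Dcal^\times k_\AA^\times\backslash\Dcal_\AA^\times$, whereas in the paper it comes out of Theorem~\ref{thm B.1}, which you in any case invoke.
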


\begin{proof}
From the equality~(\ref{eqn B.1}) in \textit{Remark}~\ref{rem B.2}, we get that for $b_1,b_2 \in \Dcal_\AA^\times$,
\begin{eqnarray}
\int_{\Dcal^\times k_\AA^\times \backslash \Dcal_\AA^\times} \theta^\Dcal(bb_1,bb_2;\phi,\varphi) db
&=& 2 J^\Dcal(1;\phi',\varphi') \nonumber \\
&=&  \frac{2 L(1,\Pi,\text{\rm Ad})}{\zeta_k(2)} \cdot \prod_v J^{\Dcal,o}(1;\phi'_v,\varphi_v') \nonumber \\
&=& \frac{2 L(1,\Pi,\text{\rm Ad})}{\zeta_k(2)} \cdot \prod_v \theta_v^{\Dcal,o}(b_{1,v}, b_{2,v}; \phi_v,\varphi_v). \nonumber
\end{eqnarray}

\end{proof}

Note that the equality~(\ref{eqn B.1}) follows from a \lq\lq metaplectic type\rq\rq\ Siegel-Weil formula, which is verified in the next section.

\section{Siegel-Weil formula for the metaplectic Eisenstein series}\label{sec B}

\subsection{Metaplectic groups}\label{sec B.1}
For each place $v$, the \it Kubota $2$-cocycle \rm $\sigma_v'$ 
is defined by (cf.\ \cite[Section 3]{Ku2}):
$$\sigma_v'(g_1,g_2):= \left(\frac{x(g_1g_2)}{x(g_1)}, \frac{x(g_1g_2)}{ x(g_2)}\right)_v, \quad \forall g_1,g_2 \in \SL_2(k_v).$$
Here
$$x\begin{pmatrix}a&b\\c&d\end{pmatrix}:= \begin{cases}c, & \text{ if $c \neq 0$,}\\ d, & \text{ if $c = 0$;} \end{cases}$$
and $(\cdot,\cdot)_v$ is the Hilbert quadratic symbol at $v$. 
Define a map $s_v: \SL_2(k_v)\rightarrow \{\pm1\}$ by setting
$$s_v\begin{pmatrix}a&b\\c&d\end{pmatrix}:=
\begin{cases}
(c,d)_v, & \text{ if $\ord_v(c)$ is odd and $d \neq 0$,} \\
1, & \text{ otherwise.} 
\end{cases}$$
Let $\sigma_v$\index{Kubota $2$-cocycle $\sigma_v$} be the $2$-cocycle defined by
$$\sigma_v(g_1,g_2):= \sigma_v'(g_1,g_2) s_v(g_1)s_v(g_2)s_v(g_1g_2)^{-1},
\quad \forall g_1,g_2 \in \SL_2(k_v).$$
It is known that (cf.\ \cite[Section 2.3]{Gel})
$\sigma_v(\kappa_1,\kappa_2) = 1 \ \forall \kappa_1,\kappa_2 \in \SL_2(O_v)$.
Hence $\sigma_v$ induces a central extension $\widetilde{\SL}_2(k_v)$ of $\SL_2(k_v)$ by $\{\pm 1\}$ which splits on the subgroup $\SL_2(O_v)$.
More precisely, the extension $\widetilde{\SL}_2(k_v)$ is identified with $\SL_2(k_v)\times \{\pm 1\}$ (as sets) with the following group law:
$$(g_1,\xi_1)\cdot (g_2,\xi_2) = \big(g_1g_2, \xi_1\xi_2 \sigma_v(g_1,g_2)\big).$$

Globally, we define a $2$-cocycle $\sigma$ on $\SL_2(k_\AA)$ by setting $\sigma:= \otimes_v \sigma_v$, and let $\widetilde{\SL}_2(k_\AA)$ be the corresponding central extension of $\SL_2(k_\AA)$ by $\{\pm 1\}$.
The section
$$\begin{tabular}{ccc}
$\SL_2(k_{\AA})$ & $\longrightarrow$ & $\widetilde{\SL}_2(k_\AA)$ \\
$\kappa$ & $\longmapsto$ & $(\kappa,1)$
\end{tabular}$$
becomes a group homomorphism when restricting to $\SL_2(O_\AA)$,
which embeds $\SL_2(O_\AA)$ into $\widetilde{\SL}_2(k_\AA)$ as a subgroup.
Moreover, for every $\gamma \in \SL_2(k)$, the value $s(\gamma) := \prod_v s_v(\gamma)$ is well-defined, and 
the embedding
$$\begin{tabular}{rccc}
$\SL_2(k)$ & $\longrightarrow$ & $\widetilde{\SL}_2(k_\AA)$ \\
$\gamma$ & $\longmapsto$ & $\big(\gamma,s(\gamma)\big)$
\end{tabular}$$
preserves the group law. Thus we may view $\SL_2(k)$ as a discrete subgroup of $\widetilde{\SL}_2(k_\AA)$.

\subsection{Weil representation}\label{sec B.2}

Let $\Dcal$ be a division quaternion algebra over $k$. We first write $\Dcal$ as $V_1 \oplus V_3$, where $V_1 = k$ and $V_3 := \{ b \in \Dcal: \tr_{\Dcal/k}(b) = 0\}$. Put $Q_{V_i} := \Nr_{\Dcal/k}\mid_{V_i}$. Then the quadratic space $(V_i,Q_{V_i})$ is anisotropic with dimension $i$, and $\text{SO}(V_3) \cong \Dcal^\times /k^\times$.
Let $\omega^{V_i} = \otimes_v \omega^{V_i}_v$ be the Weil representation of the metaplectic group $\widetilde{\SL_2}(k_\AA) \times \Oo(V_i)$ on the Schwartz space  $S(V_i(k_\AA))$, where for each place $v$, $\omega^{V_i}_v$ is defined as follows  (cf.\ \cite[Section 2.3]{Gel}):
\begin{eqnarray}
(1) && \omega^{V_i}_v(h) \phi(x):= \phi(h^{-1} x), \text{ $h \in \Oo(V_i)$}; \nonumber \\
(2) && \omega^{V_i}_v(1,\xi) \phi(x):= \xi \cdot \phi(x), \text{ $\xi \in \{\pm 1\}$}; \nonumber \\
(3) && \omega^{V_i}_v \left(\begin{pmatrix}1&u\\0&1\end{pmatrix},1\right)\phi(x) = \psi_v(u Q_{V_i}(x))\phi(x), \text{ $u \in k_v$};\nonumber \\
(4) && \omega^{V_i}_v \left(\begin{pmatrix}a_v&0\\0&a_v^{-1}\end{pmatrix},1\right)\phi(x)=|a_v|_v^{\frac{i}{2}} (a_v,a_v)_v \frac{\varepsilon^{V_i}_v(a_v)}{\varepsilon^{V_i}_v(1)} \cdot \phi(a_v x), \text{ $a_v \in k_v^{\times}$;} \nonumber \\
(5) && \omega^{V_i}_v\left(\begin{pmatrix}0&1\\-1&0\end{pmatrix},1\right)\phi(x)= \varepsilon^{V_i}_v(1) \cdot \widehat{\phi}(x). \nonumber
\end{eqnarray}
Here:
\begin{itemize}
\item 
$$\varepsilon^{V_i}_v(a_v):= \int_{L_{v}} \psi_v(a_v Q_{V_i}(x))d_{a_v}x, \quad \forall a_v \in k_v^{\times},$$
where $L_{v}$ is a sufficiently large $O_v$-lattice in $V_i(k_v)$, and the Haar measure $d_{a_v}x$ is self-dual with respect to the pairing 
$$(x,y)\mapsto \psi_v(a_v \cdot \tr_{\mathcal{D}/k}(x\bar{y})),\quad \forall x,y \in V_i(k_v);$$
\item $\widehat{\phi}(x)$ is the Fourier transform of $\phi$:
$$\widehat{\phi}(x) :=\int_{V_i(k_v)} \phi(y)\psi_v(\tr_{\mathcal{D}/k}(x\bar{y}))dy.$$
\end{itemize}

For $a = (a_v)_v \in k_\AA^\times$, we put $\varepsilon^{V_i}(a):= \prod_v \varepsilon_v^{V_i}(a_v)$.


\subsection{Siegel-Eisenstein series}\label{sec B.3}

Recall that $\B^1$ denotes the standard Borel subgroup of  $\SL_2$, 
and let $\widetilde{\B}^1$ be the preimage of $\B^1$ in $\widetilde{\SL}_2(k_\AA)$.

For each $\phi \in S(V_3(k_\AA))$, the Siegel section associated to $\phi$ is defined by:
$$\Phi_{\phi}(\tilde{g},s) = \xi \cdot \frac{\varepsilon^{V_3}(a)}{\varepsilon^{V_3}(1)} \cdot |a|_{k_\AA}^{s+\frac{1}{2}} \cdot \big(\omega^{V_3}(\kappa^1)\phi\big)(0), \quad \text{ for } \tilde{g} \in \widetilde{\SL}_2(k_\AA), 
$$
where $a \in k_\AA^\times$ and  $\kappa^1 \in \SL_2(O_\AA)$ so that $\tilde{g} =\left(\begin{pmatrix} a& * \\ 0 & a^{-1} \end{pmatrix},\xi\right) \cdot \kappa^1$.
The Siegel-Eisenstein series associated to $\varphi$ is then defined by
$$E(\tilde{g},s,\phi) := \sum_{\gamma \in B^1(k)\backslash \SL_2(k)} \Phi_{\phi}(\gamma \tilde{g},s),$$
which converges absolutely for $\re(s) > 3/2$ and has meromorphic continuation to the whole complex $s$-plane.
In this section, we shall verify that:

\begin{thm}\label{thm B.1}
Given $\phi \in S(V_3(k_\AA))$, we have
$$E(\tilde{g},1 , \phi) = \frac{1}{2} \cdot I^3(\tilde{g},\phi),  \quad \forall \tilde{g} \in \widetilde{\SL}_2(k_\AA),$$
where
$$I^3(\tilde{g},\phi) := \int_{\Dcal^\times k_\AA^\times/\Dcal_\AA^\times} \Big(\sum_{x \in V_3(k)} \omega^{V_3}(\tilde{g},b)\phi(x)\Big) db.$$
\end{thm}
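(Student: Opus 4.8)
The plan is to establish the identity by showing that $E(\tilde g,1,\phi)$ and $\tfrac12 I^3(\tilde g,\phi)$ are genuine automorphic forms on $\widetilde{\SL}_2(k_\AA)$ of moderate growth with the same Fourier expansion along the standard unipotent radical $\U$. The integral $I^3(\tilde g,\phi)$ converges absolutely because $\Dcal$ is division, so $Q_{V_3}=\Nr_{\Dcal/k}|_{V_3}$ is anisotropic over $k$, $\SO(V_3)\cong\Dcal^\times/k^\times$ is $k$-anisotropic, and the quotient $\Dcal^\times k_\AA^\times\backslash\Dcal_\AA^\times$ is compact; the Eisenstein series has the meromorphic continuation recorded above, and the asserted identity in particular contains the claim that $s=1$ is not a pole. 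Since a genuine moderate-growth automorphic form on $\widetilde{\SL}_2(k_\AA)$ equals the sum of its $\psi_\beta$-Fourier coefficients $(\beta\in k)$, and since a genuine automorphic form all of whose Fourier coefficients vanish is a genuine constant and hence zero, it suffices to compare the $\psi_\beta$-coefficient of the two sides for each $\beta\in k$.

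For $\beta\in k^\times$ the comparison is a local-to-global matching. On the Eisenstein side, the Bruhat decomposition of $\SL_2$ unfolds the $\psi_\beta$-coefficient of $E(\tilde g,s,\phi)$ into a product over $v$ of local intertwining integrals $\int_{k_v}\Phi_{\phi_v}\big(w\begin{pmatrix}1&u\\0&1\end{pmatrix}\tilde g_v,s\big)\overline{\psi_v(\beta u)}\,du$ (with $w$ the Weyl element), which converge for $\re(s)$ large, continue to $s=1$, and are the expected unramified factors at almost all $v$. On the theta side, the $\psi_\beta$-coefficient of $\theta^{V_3}(\cdot,b;\phi)$ selects $\{x\in V_3(k):Q_{V_3}(x)=\beta\}$, which by Witt's extension theorem is a single $\SO(V_3)(k)$-orbit when nonempty and empty otherwise, with stabilizer the norm-one torus $T_\beta=\SO(x_0^\perp)$ of a representative $x_0$; unfolding the integral over $\Dcal^\times k_\AA^\times\backslash\Dcal_\AA^\times$ against this orbit rewrites the $\psi_\beta$-coefficient of $I^3(\tilde g,\phi)$ as an integral over $T_\beta(k)\backslash T_\beta(k_\AA)$ of $(\omega^{V_3}(\tilde g)\phi)(h^{-1}x_0)$, which again factors as an Euler product. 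It then remains to match the local factors place by place, i.e.\ to prove the local Siegel--Weil identity at each $v$; this is a direct computation using the Iwasawa decomposition of $\SL_2(k_v)$ and the explicit formulas (1)--(5) for $\omega^{V_3}_v$, and is routine (the unramified computation) at good $v$.

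For $\beta=0$, anisotropy of $V_3$ forces $Q_{V_3}(x)=0\iff x=0$, so the constant term of $\theta^{V_3}(\cdot,b;\phi)$ along $\U$ is $(\omega^{V_3}(\tilde g,b)\phi)(0)=(\omega^{V_3}(\tilde g)\phi)(0)$, independent of $b$; integrating over $\Dcal^\times k_\AA^\times\backslash\Dcal_\AA^\times$, of total volume $2$, shows the constant term of $I^3(\tilde g,\phi)$ is $2\,(\omega^{V_3}(\tilde g)\phi)(0)$. On the Eisenstein side the constant term along $\U$ is $\Phi_\phi(\tilde g,s)+(M(s)\Phi_\phi)(\tilde g)$ with $M(s)$ the standard intertwining operator, and by the choice of section $\Phi_\phi(\tilde g,1)=(\omega^{V_3}(\tilde g)\phi)(0)$; hence matching the constant terms at $s=1$ amounts to the vanishing $(M(1)\Phi_\phi)(\tilde g)=0$, which one reduces to the local statements $M_v(1)\Phi_{\phi_v}=0$, checked again from (1)--(5), using that $\operatorname{disc}(V_3)$ is trivial so that the normalizing factor of $M(s)$ is built from $\zeta_k$ and the local zeros at $s=1$ account for the vanishing of the holomorphic value $M(1)\Phi_\phi$. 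Together with the previous paragraph this shows that all $\psi_\beta$-coefficients of $E(\tilde g,1,\phi)$ and $\tfrac12 I^3(\tilde g,\phi)$ agree, and the theorem follows.

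The main obstacle is the behaviour of $E(\tilde g,s,\phi)$ at $s=1$: this is the boundary of the range in which the Siegel Eisenstein series on $\widetilde{\SL}_2$ is automatically holomorphic, so for a general section a pole is possible there, and one must show the sections $\Phi_\phi$ coming from the Weil representation of $\widetilde{\SL}_2\times\Oo(V_3)$ avoid it and that $M(1)\Phi_\phi=0$. The leverage is the global anisotropy of $V_3$: $\Dcal$ ramifies at only finitely many places, and at those the local Weil section and the local intertwining integral are completely explicit, while at the remaining (split) places the computation is the unramified one; assembling these one sees that the product of normalized local intertwining operators is holomorphic and vanishes at $s=1$. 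A secondary technical point is that the local intertwining integrals and the local torus period integrals sit at the edge of their ranges of absolute convergence at $s=1$ and must be interpreted via meromorphic continuation.
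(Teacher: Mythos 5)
Your plan is a genuinely different route from the paper's, and the difference is exactly where the gap sits. The paper does not match the nonzero Fourier coefficients exactly: it matches only the constant terms, invokes for $\beta\neq 0$ merely a proportionality $I^{3,*}_\beta(\cdot,\phi)=C\cdot E^*_\beta(\cdot,1,\phi)$ with an \emph{undetermined} constant $C$ (adapting \cite[Theorem A.4]{Wei}), deduces from this that $I^3(\cdot,\phi)$ is orthogonal to all cuspidal metaplectic forms (as is $E(\cdot,s,\phi)$), and then concludes because $E(\cdot,1,\phi)-\tfrac12 I^3(\cdot,\phi)$ is a cusp form orthogonal to itself. Your approach instead requires, at every place (ramified ones included, and for arbitrary $\phi_v$), the exact identity between the value at $s=1$ of the local Whittaker integral of the Weil section and the local orbital integral over $\{Q_{V_3}=\beta\}(k_v)$, with precise constants which, together with the volume of $T_\beta(k)\backslash T_\beta(k_\AA)$ and the continuation of the global Euler product to $s=1$ (it converges only for $\re(s)$ large), must assemble to exactly $\tfrac12$; you also need the Eisenstein coefficients to vanish at $s=1$ for those $\beta\in k^\times$ not represented by $V_3$ over $k$, a case your sketch omits. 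Dismissing all of this as a "routine direct computation" is the gap: these local identities are the analytic content of the theorem, and the paper's softer argument is structured precisely so that the constant $C$ never has to be computed.

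There is also a concrete error in your $\beta=0$ step: the vanishing $M(1)\Phi_\phi=0$ does \emph{not} reduce to local statements $M_v(1)\Phi_{\phi_v}=0$ at every $v$. At a place where $\Dcal_v$ splits, $V_3(k_v)$ is isotropic and the local intertwining value of the Weil section at $s=1$ is generically nonzero (the unramified computation gives a nonzero ratio of local zeta factors); the local value vanishes only at the ramified places of $\Dcal$, where $V_3(k_v)$ is anisotropic and the unfolded inner integral localizes to $\{Q_{V_3}=0\}=\{0\}$. The correct mechanism, which your final sentence only gestures at, is global: since $\Dcal$ is division there are at least two such local zeros, and they beat the simple pole of $\zeta_k(2s-1)$ in the normalizing factor of $M(s)$ at $s=1$ (note also that the delicate convergence issue at $s=1$ lies in the global Euler product, not in the individual local intertwining integrals, which converge for $\re(s)>1/2$). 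Holomorphy of $E(\cdot,s,\phi)$ at $s=1$ has to be extracted from the same analysis; in the paper this is part of the constant-term verification carried out following \cite{Wei}.
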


\begin{rem}\label{rem B.2}
Write $(V_\Dcal,Q_{V_\Dcal}) = (V_1,Q_{V_1}) \oplus (V_3, Q_{V_3})$. Given $\varphi \in S(V_\Dcal(k_\AA))$, 
suppose $\varphi = \phi_1 \oplus \phi_3$ where $\phi_1 \in S(V_1(k_\AA))$ and $\phi_3 \in S(V_3(k_\AA))$.
Then $$I^\Dcal(g,s,\varphi) = \theta^k((g,1),\phi_1) \cdot E((g,1),s,\phi_3), \quad \forall g \in \SL_2(k_\AA),$$
where $\theta^k((g,1),\phi_1):= \sum_{x \in k} \big(\omega^{V_1}(g)\phi_1\big)(x)$.
On the other hand,
$$\theta^k((g,1),\phi_1) \cdot I^3((g,1),\phi_3) = \int_{\Dcal^\times k_\AA^\times/\Dcal_\AA^\times} \theta^{V_\Dcal}(g,[b,b];\varphi) db.$$
Therefore 
$$I^\Dcal(g,1,\varphi) = \frac{1}{2}\int_{\Dcal^\times k_\AA^\times/\Dcal_\AA^\times} \theta^{V_\Dcal}(g,[b,b];\varphi) db, \quad \forall g \in \SL_2(k_\AA) \text{ and } \varphi \in S(V_\Dcal(k_\AA)).$$
In particular, for $\phi \in \Pi$ we have
\begin{eqnarray}\label{eqn B.1}
J^\Dcal(1;\phi,\varphi)& = & \int_{\SL_2(k)\backslash \SL_2(k_\AA)} \phi(g) I^\Dcal(g,1,\varphi) dg \nonumber \\
&=& \frac{1}{2} \cdot \int_{\Dcal^\times k_\AA^\times \backslash \Dcal_\AA^\times} \theta^\Dcal(b,b;\phi,\varphi) db. 
\end{eqnarray}
\end{rem}
${}$

\subsubsection*{Proof of \text{\rm Theorem B.1}}
The proof of the above theorem basically follows the approach in \cite{Wei} for the even dimensional case. Here we recall the strategy as follows:
For each $\beta \in k$, the $\beta$-th Fourier coefficient of a given metaplectic form $f$ is:
$$f_\beta^*(\tilde{g}) := \int_{k \backslash k_\AA} f\left(\Big(\begin{pmatrix} 1 & u \\ 0 & 1\end{pmatrix},1\Big) \tilde{g}\right)\psi(-\beta u) du.$$
Given $\phi \in S(V_3(k_\AA))$, we verify that:
\begin{itemize}
\item[(1)] The equality holds for the \lq\lq constant terms,\rq\rq\ i.e.\ 
$$E^*_0(\tilde{g},1,\phi) = \big(\omega^{V_3}(\tilde{g})\phi\big)(0) = \frac{1}{2}\cdot I^{3,*}_0(\tilde{g},\phi).$$
In particular, this says that $E(\tilde{g},1 ,\phi) - 1/2 \cdot I^3(\tilde{g},\phi)$ is a cusp form on $\widetilde{\SL}_2(k_\AA)$.
\item [(2)] It is straightforward that $E(\cdot,s , \phi)$ is orthogonal to all the cuspidal metaplectic forms on $\widetilde{\SL}_2(k_\AA)$ with respect to the Petersson inner product.
\item [(3)] The theta integral $I^3(\cdot,\phi)$ is orthogonal to all the cuspidal metaplectic forms on $\widetilde{\SL}_2(k_\AA)$. Indeed, adapting the proof of \cite[Theorem A.4]{Wei}, there exists a constant $C$ so that 
$$I^{3,*}_\beta(\tilde{g},\phi) = C \cdot E^*_\beta (\tilde{g},1,\phi), \quad \forall \beta \neq 0.$$
In particular, the function $I'(\cdot, \phi):= I^3(\cdot, \phi)- C\cdot E(\cdot,1,\phi)$ satisfies
$$I'\left(\Big(\begin{pmatrix} 1&u\\0&1\end{pmatrix},1\Big)\tilde{g},\phi\right) = I'(\tilde{g},\phi), \quad \forall u \in k_\AA.$$
Thus $I'(\cdot,\phi)$ is orthogonal to all the cuspidal metaplectic forms, and so is $I^3(\cdot,\phi)$ by $(2)$.
\end{itemize}
Since a cusp form orthogonal to itself must be zero, we get $E(\tilde{g},1,\phi) = 1/2 \cdot I^3(\tilde{g},\phi)$ for every $\tilde{g} \in \widetilde{\SL}_2(k_\AA)$. 
\hfill $\Box$

\section{The case when $K = k\times k$}\label{sec C}

When $K = k \times k$, we have $L(s,\varsigma_K) = \zeta_k(s)$. Moreover, the existance of an embedding $\iota: K\hookrightarrow \Dcal$ implies that $\Dcal = \Mat_2(k)$, and $\Pi^\Dcal = \Pi$. We may write the given Hecke character $\chi$ as $\chi_1 \times \chi_2$, where $\chi_i$ is a unitary Hecke character on $k^\times \backslash k_\AA^\times$ for $i = 1,2$.
The assumption $\eta \cdot \chi\big|_{k_\AA^\times} = 1$ says that $\Pi \otimes \chi_2 = \widetilde{\Pi}\otimes \chi_1^{-1}$. Thus we have $L(s,\Pi\times \chi) = L(s,\Pi \otimes \chi_1) \cdot L(s,\Pi\otimes \chi_2)$.

Without lose of generality, suppose $\iota(a,b) = \begin{pmatrix}a&0\\0&b\end{pmatrix}$ for every $a,b \in k$. Then for $f \in \Pi$ and $\tilde{f} \in \widetilde{\Pi}$,
$$\Pcal_\chi^\Dcal(f\otimes \tilde{f}) = Z(\frac{1}{2}; f,\chi_1) \cdot Z(\frac{1}{2}; \tilde{f},\chi_1^{-1}),$$
where
\begin{eqnarray}
&& Z(s;f,\chi_1) := \int_{k^\times \backslash k_\AA^\times} f\begin{pmatrix} y & 0 \\0&1\end{pmatrix} \chi_1(y) |y|_\AA^{s-1/2} d^\times y \nonumber \\
\text{ and } && Z(s;\tilde{f},\chi_1^{-1}) := \int_{k^\times \backslash k_\AA^\times} \tilde{f}\begin{pmatrix} y & 0 \\0&1\end{pmatrix} \chi_1^{-1}(y) |y|_\AA^{s-1/2} d^\times y \nonumber
\end{eqnarray}
are entire functions on the complex $s$-plane.
Note that for a pure tensor $f = \otimes_v f_v \in \Pi$, one has (cf.\ \cite[the equality~(5.31) in Chapter 3 and Proposition 3.5.3]{Bum})
\begin{eqnarray}\label{eqn C.1}
Z(s;f,\chi_1) &=& L(s,\Pi \otimes \chi_1) \cdot \prod_v Z_v^o(s;f_v,\chi_{1,v}),
\end{eqnarray}
where for each place $v$ of $k$, put
$$Z_v^o(s;f_v,\chi_{1,v}) := \frac{1}{L_v(s,\Pi\otimes\chi_1)} \cdot \int_{k_v^\times} W_{f_v}\begin{pmatrix} y_v & 0 \\ 0& 1\end{pmatrix} \chi_{1,v}(y_v)|y_v|_v^{s-1/2} d^\times y_v.$$
Here $W_{f_v}$ is the Whittaker function associated to $f_v$ with respect to the chosen $\psi_v$.
Similarly, for a pure tensor $\tilde{f} = \otimes \tilde{f}_v \in \widetilde{\Pi}$ we have
\begin{eqnarray}\label{eqn C.2}
Z(s;\tilde{f},\chi_1^{-1}) &=& L(s,\widetilde{\Pi} \otimes \chi_1^{-1}) \cdot \prod_v Z_v^o(s;\tilde{f}_v,\chi_{1,v}^{-1}),
\end{eqnarray}
where
$$Z_v^o(s;\tilde{f}_v,\chi_{1,v}^{-1}) = \frac{1}{L_v(s,\widetilde{\Pi}\otimes\chi_1^{-1})} \cdot \int_{k_v^\times} W_{\tilde{f}_v}'\begin{pmatrix} y_v & 0 \\ 0& 1\end{pmatrix} \chi_{1,v}^{-1}(y_v)|y_v|_v^{s-1/2} d^\times y_v.$$
Here we take $W_{\tilde{f}_v}'$ to be the Whittaker function associated to $\tilde{f}_v$ with respect to $\overline{\psi}_v$.
Note that the validity of Ramanujan bounds for $\Pi$ and $\widetilde{\Pi}$ implies that $Z_v^o(s;f_v,\chi_{1,v})$ and $Z_v^o(s;\tilde{f}_v,\chi_{1,v}^{-1})$ both converge absolutely at $s = 1/2$.

Recall that we may choose $\langle \cdot,\cdot \rangle_v^{\Mat_2} : \Pi_v \times \widetilde{\Pi}_v \rightarrow \CC$ by:
$$\langle f_v,\tilde{f}_v\rangle_v^{\Mat_2} := \frac{\zeta_v(2)}{\zeta_v(1) L_v(1,\Pi, \text{Ad})} \cdot \int_{k_v^\times} W_{f_v}\begin{pmatrix} y & 0 \\ 0&1 \end{pmatrix} W_{\tilde{f}_v}'\begin{pmatrix} y & 0 \\ 0&1 \end{pmatrix} d^\times y.$$
Indeed, by the Rankin-Selberg method these local pairings satisfy:
$$\langle \cdot, \cdot\rangle_{\text{Pet}}^{\Mat_2} = \frac{2 L(1,\Pi,\text{Ad})}{\zeta_k(2)} \cdot \prod_v \langle \cdot,\cdot \rangle_v^{\Mat_2}.$$
Therefore we get
$$\Pscr_{\chi,v}^\Dcal(f_v \otimes \tilde{f}_v) = Z_v^o(\frac{1}{2}; f_v , \chi_{1,v}) \cdot Z_v^o(\frac{1}{2};\tilde{f}_v,\chi_{1,v}^{-1}).$$
From the equation~(\ref{eqn C.1}) and (\ref{eqn C.2}), we arrive at:
\begin{thm}\label{thm C.1}
\text{\rm Theorem~\ref{thm 0.1}} holds for the case when $K = k\times k$.
\end{thm}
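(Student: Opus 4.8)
The plan is to reduce everything to the classical Hecke--Jacquet theory of $\GL_2$ zeta integrals, exploiting the fact that when $K = k\times k$ the quaternion algebra is forced to be $\Mat_2(k)$ and the ``toric period'' degenerates into an integral over the diagonal split torus. First I would record the structural simplifications: $\varsigma_K$ is trivial, so $L(s,\varsigma_K) = \zeta_k(s)$; the embedding $\iota : K \hookrightarrow \Dcal$ forces $\Dcal = \Mat_2(k)$ and hence $\Pi^\Dcal = \Pi$; and writing $\chi = \chi_1\times\chi_2$ on $k_\AA^\times\times k_\AA^\times$, the hypothesis $\eta\cdot\chi|_{k_\AA^\times}=1$ gives $\Pi\otimes\chi_2 = \widetilde{\Pi}\otimes\chi_1^{-1}$, whence $L(s,\Pi\times\chi) = L(s,\Pi\otimes\chi_1)\cdot L(s,\widetilde{\Pi}\otimes\chi_1^{-1})$. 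Normalizing $\iota(a,b) = \begin{pmatrix} a & 0 \\ 0 & b\end{pmatrix}$, the quotient $K^\times k_\AA^\times\backslash K_\AA^\times$ is identified with $k^\times\backslash k_\AA^\times$ via the first coordinate, so $P_\chi^\Dcal(f)$ becomes the Hecke--Jacquet zeta integral $Z(1/2;f,\chi_1)$, and likewise $P_{\chi^{-1}}^\Dcal(\tilde f) = Z(1/2;\tilde f,\chi_1^{-1})$; thus $\Pcal_\chi^\Dcal(f\otimes\tilde f) = Z(1/2;f,\chi_1)\cdot Z(1/2;\tilde f,\chi_1^{-1})$.

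Since $\Pcal_\chi^\Dcal$ and $\Pscr_\chi^\Dcal$ are both linear functionals on $\Pi\otimes\widetilde{\Pi}$, it suffices to verify the identity on pure tensors $f = \otimes_v f_v$, $\tilde f = \otimes_v \tilde f_v$. For these I would invoke the Euler factorization of the Hecke--Jacquet zeta integral (cf.\ \cite{Bum}), $Z(s;f,\chi_1) = L(s,\Pi\otimes\chi_1)\prod_v Z_v^o(s;f_v,\chi_{1,v})$ and likewise for $\tilde f$, where $Z_v^o$ is the ratio of the local zeta integral to the local $L$-factor; the Ramanujan bound for $\Pi$ (available over function fields) guarantees that the local integrals converge at $s = 1/2$, so the products make sense there. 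On the other side, using that the local duality pairing $\langle\cdot,\cdot\rangle_v^{\Mat_2}$ is realized by the local Rankin--Selberg integral with the normalization that makes $\frac{2L(1,\Pi,\text{Ad})}{\zeta_k(2)}\prod_v\langle\cdot,\cdot\rangle_v^{\Mat_2}$ recover the Petersson pairing, a direct substitution into the definition of $\Pscr_{\chi,v}^\Dcal$ --- plugging in $\iota(K_v^\times)$ equal to the diagonal torus and using $L_v(1,\varsigma_K) = \zeta_v(1)$ together with $L_v(1/2,\Pi\times\chi) = L_v(1/2,\Pi\otimes\chi_1)L_v(1/2,\widetilde{\Pi}\otimes\chi_1^{-1})$ --- yields $\Pscr_{\chi,v}^\Dcal(f_v\otimes\tilde f_v) = Z_v^o(1/2;f_v,\chi_{1,v})\cdot Z_v^o(1/2;\tilde f_v,\chi_{1,v}^{-1})$, the adjoint $L$-factor and $\zeta_v(2)$ cancelling between the two normalizations.

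Finally I would assemble the pieces: multiplying the local identities over all $v$ and re-collecting $L$-factors gives
\[
\Pscr_\chi^\Dcal(f\otimes\tilde f) = \prod_v Z_v^o(\tfrac12;f_v,\chi_{1,v})\cdot\prod_v Z_v^o(\tfrac12;\tilde f_v,\chi_{1,v}^{-1}) = \frac{Z(\tfrac12;f,\chi_1)}{L(\tfrac12,\Pi\otimes\chi_1)}\cdot\frac{Z(\tfrac12;\tilde f,\chi_1^{-1})}{L(\tfrac12,\widetilde{\Pi}\otimes\chi_1^{-1})},
\]
and multiplying through by $L(1/2,\Pi\times\chi) = L(1/2,\Pi\otimes\chi_1)L(1/2,\widetilde{\Pi}\otimes\chi_1^{-1})$ recovers $\Pcal_\chi^\Dcal(f\otimes\tilde f)$, which is exactly the asserted equality of functionals. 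The main point requiring care --- the ``hard part'' in an otherwise soft argument --- is the exact bookkeeping of the local normalizing factors: one must check that the $L_v(1,\Pi,\text{Ad})$, $\zeta_v(1)$, $\zeta_v(2)$, and half-integral powers of $|\cdot|_v$ appearing in (i) the realization of $\langle\cdot,\cdot\rangle_v^{\Mat_2}$, (ii) the definition of $\Pscr_{\chi,v}^\Dcal$, and (iii) the standard local Hecke zeta integral together with its $L$-factor all cancel, and that the Tamagawa measure on $K_v^\times/k_v^\times$ used in $\Pscr_{\chi,v}^\Dcal$ matches the measure $d^\times y_v$ on $k_v^\times$ used in $Z_v^o$.
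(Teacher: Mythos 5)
Your proposal is correct and follows essentially the same route as the paper's Appendix~C: identify $\Pcal_\chi^\Dcal(f\otimes\tilde f)$ with $Z(\tfrac12;f,\chi_1)\cdot Z(\tfrac12;\tilde f,\chi_1^{-1})$ via $\iota(a,b)=\mathrm{diag}(a,b)$, use the Hecke--Jacquet Euler factorization (with Ramanujan bounds for convergence at $s=\tfrac12$), and match $\Pscr_{\chi,v}^\Dcal(f_v\otimes\tilde f_v)$ with $Z_v^o(\tfrac12;f_v,\chi_{1,v})\cdot Z_v^o(\tfrac12;\tilde f_v,\chi_{1,v}^{-1})$ through the Whittaker realization of $\langle\cdot,\cdot\rangle_v^{\Mat_2}$, exactly as the paper does (the local normalization cancellation you flag is carried out in the split case of Lemma~\ref{lem 5.1}).
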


\subsubsection*{Acknowledgments}

The authors are grateful to Jing Yu for his steady encouragements. This work was completed while the second author was visiting Institute for Mathematical Sciences at National University of Singapore. He would like to thank Professor Wee Teck Gan for the invitation, and the institute for kind hospitality and wonderful working conditions.

\end{document}